\crefname{hypothesis}{Hypothesis}{Hypotheses}
\def\cred{\color{black}}
\newcommand{\aaa}[1]{{\color{blue}#1}}
\newcommand{\aaar}[1]{{\color{purple}#1}}
\newcommand{\ignore}[1]{}
\DeclareMathOperator{\conv}{conv}
\renewcommand{\aaa}[1]{#1}\renewcommand{\st}[1]{}
\renewcommand{\aaar}[1]{{#1}}
\newcommand {\iref}[1]{(\ref{#1})}
\newcommand {\beq}{\[}\newcommand {\eeq}{\]}
\newcommand {\beqn}{\begin{equation}}\newcommand {\eeqn}{\end{equation}}
\newcommand {\beqan}{\begin{eqnarray}}\newcommand {\eeqan}{\end{eqnarray}}
\newcommand {\beqa}{\begin{eqnarray*}}\newcommand {\eeqa}{\end{eqnarray*}}
\newcommand{\lc}{\left\lceil}\newcommand{\rc}{\right\rceil}
\def\skipit#1{}
\newcommand {\R}{\mathbb{R}}
\def\skipit#1{}
\Crefname{ALC@unique}{Line}{Lines}
\colorlet{texcscolor}{blue!50!black}
\colorlet{texemcolor}{red!70!black}
\colorlet{texpreamble}{red!70!black}
\colorlet{codebackground}{black!25!white!25}
\lstdefinestyle{siamlatex}{%
  style=tcblatex,
  texcsstyle=*\color{texcscolor},
  texcsstyle=[2]\color{texemcolor},
  keywordstyle=[2]\color{texemcolor},
  moretexcs={cref,Cref,maketitle,mathcal,text,headers,email,url},
}
\DeclareTotalTCBox{\code}{ v O{} }
{ %fontupper=\ttfamily\color{texemcolor},
  fontupper=\ttfamily\color{black},
  nobeforeafter,
  tcbox raise base,
  colback=codebackground,colframe=white,
  top=0pt,bottom=0pt,left=0mm,right=0mm,
  leftrule=0pt,rightrule=0pt,toprule=0mm,bottomrule=0mm,
  boxsep=0.5mm,
  #2}{#1}
\patchcmd\newpage{\vfil}{}{}{}
\title{Robust-to-Dynamics Optimization   %\thanks{%Submitted to the editors on 5/9/2018.
\thanks{This work was partially funded by the DARPA Young Faculty Award, the Young Investigator Award of the AFOSR, the CAREER Award of the NSF, the Google Faculty Award, and the Sloan Fellowship. The authors would also like to thank the Mathematical Institute at Oxford University for hosting them while part of this research was being conducted.}}
\author{Amir Ali Ahmadi \thanks{ORFE, Princeton University, Sherrerd Hall, Princeton, NJ 08540 (\email{aaa@princeton.edu})}   \and
	Oktay G\"unl\"uk \thanks{Cornell University, School of ORIE, Ithaca, NY 14853 (\email{ong5@cornell.edu}). The author was partially supported by ONR grant N00014-21-1-2575}%etc.
}
\begin{document}
\maketitle

%% ------------------------------------------------------------------
%% ABSTRACT
%% ------------------------------------------------------------------
\begin{tcbverbatimwrite}{tmp_\jobname_abstract.tex}
\begin{abstract}
A \emph{robust-to-dynamics optimization (RDO) problem} is an optimization problem specified by two pieces of input: (i) a mathematical program (an objective function $f:\mathbb{R}^n\rightarrow\mathbb{R}$ and a feasible set $\Omega\subseteq\mathbb{R}^n$), and (ii) a dynamical system (a map $g:\mathbb{R}^n\rightarrow\mathbb{R}^n$). Its goal is to minimize $f$ over the set $\mathcal{S}\subseteq\Omega$ of initial conditions that forever remain in $\Omega$ under $g$. \aaa{The focus of this paper is on the case where the mathematical program is a linear program and the dynamical system is either a known linear map, or an uncertain linear map that can change over time.} In both cases, we study a converging sequence of polyhedral outer approximations and (lifted) spectrahedral inner approximations to $\mathcal{S}$. Our inner approximations are optimized with respect to the objective function $f$ and their semidefinite characterization---which has a semidefinite constraint of fixed size---is obtained by applying polar duality to convex sets that are invariant under (multiple) linear maps. We characterize three barriers that can stop convergence of the outer approximations to $\mathcal{S}$ from being finite. We prove that once these barriers are removed, our inner and outer approximating procedures find an optimal solution and a certificate of optimality for the RDO problem in a finite number of steps. Moreover, in the case where the dynamics are linear, we show that this phenomenon occurs in a number of steps that can be computed in time polynomial in the bit size of the input data. Our analysis also leads to a polynomial-time algorithm for RDO instances where the spectral radius of the linear map is bounded above by any constant less than one. Finally, in our concluding section, we propose a broader research agenda for studying \emph{optimization problems with dynamical systems constraints}, of which RDO is a special case.
\end{abstract}

\begin{keywords}
Optimization in dynamical systems, semi-infinite linear programs, joint spectral radius, semidefinite programming-based approximations.
\end{keywords}

\begin{AMS}
 90C34, 90C25, 90C22, 93D05, 93B40.
\end{AMS}
\end{tcbverbatimwrite}
\input{tmp_\jobname_abstract.tex}
%% ------------------------------------------------------------------
%% END HEADER
%% ------------------------------------------------------------------

\section{Introduction}
%==========================================================

In many real-world situations, a decision maker is called upon to make a decision that
optimizes an objective function and satisfies a current set of constraints. These constraints however can change over time because of the influence of an external dynamical system, rendering the original decision infeasible. \aaa{For example, a vaccine that meets a certain level of efficacy against a given virus can fail to do so under future mutations of the virus.} The goal is then to make the best decision among all possible decisions which remain feasible as the constraints vary over time. By changing the point of reference, such a scenario can equivalently be thought of as a situation where a constraint set is fixed, but the original decision is moved around under the influence of a dynamical system. The goal is then to make the best decision among all decisions that remain in the (fixed) constraint set under the influence of the dynamical system.

%The decision maker must then sacrifice some objective value for robustness, making a decision that remains feasible as the constraints vary over time. By changing the point of reference, such a scenario can equivalently be thought of as a situation where a constraint set is fixed, but the original decision of the decision maker is moved around under the influence of a dynamical system. The goal of the decision maker is then to make a decision that optimizes her objective function and remains in the (fixed) constraint set under the action of the dynamical system. 

%Note that in this scenario, the initial decision remains fixed while the constraints change under the action of a dynamical system. By changing the point of reference, this scenario is equivalent to the case where the constraints remain fixed and the initial decision moves under the influence of a dynamical system. The goal in the latter scenario would be to make a decision that remains feasible over time even though it is moved by the dynamical system. 

\aaa{\st{As an illustration of problems of this type, consider the case of wild animal culling, which is the practice of artificially modifying the size of animal populations via hunting. In the United States e.g., deer is culled to limit the propagation of Lyme disease of which they are a vector. In this scenario, experts would like to minimize the number of animals that are allowed to be hunted. The constraint here is to keep their future population, which is determined by the population dynamics of the animal, under a certain level (e.g., to impede propagation of the disease they carry), but above some other level (e.g., to ensure that the species will not go extinct). Problems that can be modeled in this way also occur in infrastructure or medical equipment design. For example, one may want to find the minimum amount of tension that should be applied to the cables in a suspension bridge so that it remains stable under the dynamics that are applied to it. These can be the result of, e.g., pedestrian or vehicle movement on the bridge, or natural disasters like earthquakes. Similarly, in safety-critical medicine, one may want to decide on the minimum width of a stent that goes into an artery during an angioplasty procedure so that blood flow through the stent remains above a certain level and the stent stays in place. These constraints should be met at all times even though the width of the stent can vary over time as the artery undergoes changes depending on the body's temperature, and the viscosity and pressure of the blood.}}

%how wide a stent should be relative to the width of the artery it is positioned in during an angioplasty. 
%
%
%
%This relative width can vary over time as the artery undergoes changes depending on the body's temperature, the blood's viscosity, and the blood's pressure. 
%
%The instrument should not move too much from its initial position, regardless of the forces that are applied to it and that vary over time as a result of changes in the patient's heartbeat and blood viscosity. Indeed, if the stent were to collapse, the artery would close up again with potentially life-threatening consequences for the patient.

In this paper, we study a mathematical abstraction of problems of this nature. We will refer to them as \emph{``robust-to-dynamics optimization'' (RDO)} problems. The name alludes to the fact that the solution to these problems needs to be robust to external dynamics, in the sense that it should remain feasible at all times as it is moved by a dynamical system.
%
% to the 
%action of the underlying dynamical system, in the sense that it should remain feasible 
%
%in the sense that it should remain feasible regardless of the changes that the initial problem undergoes. 
RDO problems have a very natural mathematical formulation which relies on two pieces of input:

\begin{enumerate}
\item an optimization problem:
\begin{equation}\label{eq:opt.input}
\min_{x} \{f(x): x\in\Omega\},
\end{equation}

\item a dynamical system:
\aaa{\begin{equation}\label{eq:dynamics.input}
\begin{cases}
 x_{k+1}=g(x_k)& \  \mbox{in discrete time (focus of this paper), or}\\
~~~~ \dot{x}=g(x)& \ \mbox{in continuous time}.
\end{cases}
\end{equation}
}
\end{enumerate}
Here, we have $x\in\mathbb{R}^n$, $\Omega\subseteq\mathbb{R}^n$, $f:\mathbb{R}^n\rightarrow\mathbb{R}$, $g:\mathbb{R}^n\rightarrow\mathbb{R}^n$; $x_k$ denotes the state at time step $k$, and $\dot{x}$ is the derivative of $x$ with respect to time. RDO is then the following optimization problem:\footnote{ \aaa{We write ``u.t.d.'' as an abbreviation for ``under the dynamics''.}}
\begin{equation}\label{eq:rdo.DT}
\min_{x_0}\{f(x_0): x_k\in\Omega \text{ for } k=0,1,2,\ldots, \text{ u.t.d. } x_{k+1}=g(x_k)  \}
\end{equation}
in discrete time, or

\begin{equation}\nonumber
\min_{x_0}\{f(x_0): x(t;x_0)\in\Omega, \forall t\geq 0, \text{ u.t.d. } \dot{x}=g(x)  \}
\end{equation}
in continuous time, where $x(t;x_0)$ denotes the solution of the differential equation $\dot{x}=g(x)$ at time $t$, starting at the initial condition $x_0\in\mathbb{R}^n$. \aaa{By the u.t.d. notation, we imply that} $x_k$ (resp. $x(t;x_0)$) must satisfy the equation of our dynamics $x_{k+1}=g(x_k)$ (resp. $\dot{x}=g(x)$) for $k=0,1,2,\ldots$ (resp. $t\geq 0$). In words, we are optimizing an objective function $f$ over the set of initial conditions that never leave the set $\Omega$ under the dynamics governed by $g$.

RDO problems can naturally be categorized depending on the type of optimization problem considered in (\ref{eq:opt.input}) and the type of dynamics considered in (\ref{eq:dynamics.input}). A list of some possible combinations is given in the table below.%The entries marked with a star are those that we study in this paper (in the discrete time case).

\begin{table}[H]
	\centering
	\begin{tabular}{l|l}
		\textbf{Optimization Problem $(f,\Omega)$} & \textbf{Dynamical System $(g)$}                 \\ \hline
		Linear program                & Linear                            \\
		Convex quadratic program            & Nonlinear (e.g., polynomial)                         \\
		Second order cone program           & Uncertain                         \\
		Semidefinite program           & Time-varying                      \\
			Integer program         & Uncertain and time-varying \\
		\vdots         & \vdots 
	\end{tabular}
		\caption{		Any combination of an entry from the first column and an entry from the second column of this table leads to an RDO problem.}
		\label{tab:rdo}
\end{table}

%Of course, some entries of this table have their own variants. For example, there are multiple ways of modeling a robust linear program *,*, an uncertain dynamical system*, or a time-varying dynamical system *,*. 

\aaa{This framework can be even further generalized by considering optimization problems with ``dynamical system constraints'' (see Section~\ref{sec:opt.with.DS}). In this paper, we focus on RDO problems where the mathematical program is a linear program and the dynamical system is discrete and either a known linear map, or an uncertain linear map that can change over time. The second case subsumes some uncertain nonlinear dynamics that are not time-varying as well.}

As is made evident above, RDO is at the juncture between optimization and dynamical systems, and as such, has some commonalities with literature in both areas. On the optimization side, RDO comes closest to the area of robust optimization and on the dynamical systems side, it has connections to the theory of invariant sets. We describe the links between these areas in more detail below.

In its most common form, robust optimization  (RO)~\cite{robust_opt_book,bertsimas_ro_survey,mulvey1995robust,bertsimas2004robust,ben2002robust} deals with problems of the type
\begin{equation}
\min_{x} \{f(x):u_i(x)\leq 0\  \forall u_i \in\mathcal{U}_i, i=1,\ldots, m\},
\end{equation}
where $\mathcal{U}_i$ is a prescribed uncertainty set for (the parameters of) the constraint function $u_i:\mathbb{R}^n\rightarrow\mathbb{R}$. Like RDO, RO problems have to contend with uncertainty in the constraints, though unlike RDO, this uncertainty is not explicitly linked to a dynamical system. As an area, RO is well-studied from a computational complexity standpoint. By now, we almost fully understand when a robust optimization problem involving a particular mathematical program  (e.g., a linear or a convex quadratic program) and a particular type of uncertainty set $\mathcal{U}_i$ (e.g., polyhedral, ellipsoidal, etc.) is polynomial-time solvable or NP-hard; see~\cite{bertsimas_ro_survey} and~\cite{robust_opt_book} and references therein. \aaa{We note that similar to robust optimization problems, the RDO framework is a special case of semi-infinite optimization \cite{LOPEZ2007491,DJELASSI2021100006,ReemtsenRuckmannBook}.
%,doi:10.1137/1035089,Goberna2001SemiinfiniteP}. 
In the case of RDO problems with discrete deterministic dynamics, one is dealing with a countable semi-infinite optimization problem. These constraints, however, have a very special structure arising from the underlying dynamical system, and we exploit this structure heavily in this paper.}

% we have a rather complete understanding of whether for a particular mathematical program  (e.g., a linear or a convex quadratic program) and a particular type of uncertainty set $\mathcal{U}_i$ (e.g., polyhedral, ellipsoidal, etc.), the problem is polynomial-time solvable or NP-hard; see~\cite{bertsimas_ro_survey} and~\cite{robust_opt_book} and references therein. 

%and a rather complete catalog of the types of optimization problems (e.g., those in the first column of Table~\ref{tab:rdo}) and the types of uncertainty sets $\mathcal{U}_i$ (e.g., polyhedral, ellipsoidal, etc.) that lead to the robust counterpart of the optimization problem being tractable (as in solvable in polynomial time) or intractable (typically meaning NP-hard) is known; see~\cite{bertsimas_ro_survey} and~\cite{robust_opt_book} and references therein. 

%and the design of controllers that change a dynamical system so that a particular set becomes invariant

%The theory of invariant sets, on the other hand,
%This is equivalent to an RDO feasibility problem. 

On the dynamics side, invariant set theory~\cite{blanchini1999set,Khalil:3rd.Ed,bitsoris1988positively,gilbert1991linear,mezic1999method,korda2014convex,athanasopoulos2016computing} concerns itself with the study of sets that are invariant under the action of dynamical systems. It also considers the problem of designing controllers that influence a dynamical system so as to make a desired set invariant. This problem has applications to model predictive control~\cite{kerrigan2000invariant,garcia1989model,legat2018computing}, among other subfields in control theory. The literature on invariance in control by and large does not consider constraints and studies the existence and the structure of invariant sets for different types of dynamical systems. The subset of the literature that does incorporate constraints typically aims at characterizing the maximal (with respect to inclusion) invariant set within a given constraint set. These maximal invariant sets are often complicated to describe even for simple dynamical systems and hence are approximated. While both inner and outer approximations are interesting, inner approximations are more relevant to applications as they provide initial conditions that remain within the constraint set for all time. To the best of our knowledge, the approximations available in the literature do not take into consideration an objective function as is done in this paper (cf. Section~\ref{subsubsec:invariant.ellipsoid.SDP} and Section~\ref{subsubsec:IrEr.switched}).

While problems related to RDO have been studied in the control community, we believe that the framework in which we present these problems---as a merging of a mathematical program and a dynamical system (cf. Table~\ref{tab:rdo})---and the questions that we study, and hence our results, are different. Our hope is that this framework will make problems of this type more palatable to the optimization community. We also believe that the framework provides the right setup for a systematic algorithmic study of RDO, as has been done so successfully for RO. Indeed, our overarching goal is to provide an understanding of the computational complexity of each type of RDO problem that arises from the two columns of Table~\ref{tab:rdo}. This can be either in the form of negative results (e.g., NP-hardness or undecidability proofs), or positive ones (algorithms with guarantees). The current paper is a step in this direction.

%\oo{[**TO DO: AAA***] In addition to potential applications to the theory of robust model predictive control, we hope that RDO would find other uses in optimization and control. Very recently, the authors of *** have used the RDO framework for the problem of safely learning a dynamical system from trajectory data. *****Another possible application is to algorithms for constrained nonlinear optimization. *** mostly English}

\aaa{Very recently, the work presented in this paper has been applied to the problem of safely learning dynamical systems from trajectory data~\cite{safe_learning}. This problem has applications to reinforcement learning and robotics and requires the learning process to respect certain ``safety constraints''. These constraints restrict the set of allowable queries (i.e., starting points) to the unknown dynamical system to those whose future trajectories are guaranteed to remain in a given (safe) set; see~\cite{safe_learning} for details. We also believe there are potential applications to constrained nonlinear optimization. Indeed, one can think of any algorithm for minimizing a function $f:\mathbb{R}^n\rightarrow\mathbb{R}$ (e.g., gradient descent) as a dynamical system. If the function is to be minimized over a feasible set $\Omega\subseteq\mathbb{R}^n$, then characterizing the set of initial conditions that remain feasible under the iterations of the algorithm is an RDO problem. In the case where $f$ is a quadratic form, $\Omega$ is polyhedral, and the minimization algorithm is gradient descent (possibly with time-varying step sizes), the resulting RDO problem falls within those studied in this paper.}

\subsection{Outline and contributions of the paper}
%[*To be updated*]

%The work culminating in this paper was initiated in 2014 and a preliminary version of it appears as a conference paper in~\cite{rdo_cdc15}. The outline of the current paper is as follows. 

\aaa{In Section \ref{sec:R-LD-LP}, we study robust to linear dynamics linear programs (R-LD-LPs), which are RDO problems  where the optimization problem is a linear program and the dynamics are linear.
\st{In Section 2, we study RDO problems in the case where the optimization problem is a linear program and the dynamics are linear: we call these problems R-LD-LPs (for robust to linear dynamics linear programs).We start by studying some basic properties of the feasible set $\mathcal{S}$ of an R-LD-LP.} We show that the feasible set $\mathcal{S}$ of an R-LD-LP is not always polyhedral and that testing membership of a point to $\mathcal{S}$ is NP-hard (Theorem~\ref{thm:S.closed.convex.invariant.non.polyhedral.nphard}).} In Section~\ref{subsec:outer.approx.LDLP}, we study a sequence of natural outer approximations $S_r$ to $\mathcal{S}$ that get tighter and tighter as $r$ grows. We give a polynomial-time checkable criterion for testing whether $S_r=\mathcal{S}$ for a given nonnegative integer $r$ (Lemma~\ref{lemma:termination}). 
%The criterion on which this lemma is based on has appeared in the literature (see e.g.~\cite{blanchini1999set}) but the result is included here for completeness and because our algorithm for checking this criterion is slightly different from previous ones. 
% and give a polynomial-time condition to check whether $S_r=\mathcal{S}$ for some $r$ in Lemma *. 
We then characterize three conditions that may stop convergence of $S_r$ to $\mathcal{S}$ from happening in a finite number of steps (Propositions~\ref{prop:rho>1.convergence.not.finite},~\ref{prop:convergence.not.finite.without.origin}, ~\ref{prop:P.unbounded.not.finite}, and Theorem~\ref{thm:S.closed.convex.invariant.non.polyhedral.nphard}, part(ii)). These conditions have previously appeared \aaa{as assumptions} in the literature on invariant sets (\aaa{see, e.g.,~\cite[Sect. 2.2]{athanasopoulos2016computing} and references therein}), \aaa{but to} the best of our knowledge, there are no formal arguments that show why all three of the conditions are needed to guarantee finite convergence. Once this is clarified, \aaa{the main theorem of Section~\ref{subsec:outer.approx.LDLP} (Theorem~\ref{thm.polyhedrality}) shows that under these three conditions, convergence of $S_r$ to $\mathcal{S}$ is not only finite but takes a number of steps that can be computed in time polynomial in the size of the data. Our proof also shows that all instances of R-LD-LP for which the spectral radius of the matrix defining the linear dynamics is upper bounded by a constant less than one can be solved in polynomial time.}

% We provide three barriers that prevent finite convergence of $S_r$ to $\mathcal{S}$ in the form of three examples of R-LD-LPs that fail to achieve finite convergence (see Propositions *,*, and *). 
 
In Section~\ref{subsec:inner.approx.LDLP}, we study inner approximations of $\mathcal{S}$ which have the advantage (comparatively to outer approximations) of providing feasible solutions to an R-LD-LP. We first give a general construction that starts with any full-dimensional and compact invariant set for the dynamical system and produces a sequence of nested inner approximations to $\mathcal{S}$ that converge to $\mathcal{S}$ finitely (Lemma~\ref{lemma:inner.approx}, Lemma~\ref{lemma:inner.approx.limit}, and Corollary~\ref{corollary:inner.approx.finite}). \aaa{Using this procedure, a finitely-convergent sequence of upper bounds on the optimal value of an R-LD-LP can be computed by solving convex quadratic programs (cf. Section~\ref{subsubsec:IrE.LDLP}). In Section~\ref{subsubsec:invariant.ellipsoid.SDP}, we formulate \aaa{a sequence of semidefinite programs which find inner approximations to $\mathcal{S}$ that are optimally aligned with the objective function of the R-LD-LP~(Theorem~\ref{thm:inner.SDP}, part(i)).} \st{While these problems are nonconvex in their original formulation, we show that they can be reparameterized as a semidefinite program (Theorem~ 2.11, part(i)).} We show that the solutions to our semidefinite programs coincide with the optimal solutions to our R-LD-LP after a number of steps that can be computed in polynomial time (Theorem~\ref{thm:inner.SDP}, part (ii)).}

%We first provide simple inner approximations $I_r(E)$ to $\mathcal{S}$ (here, $E$ is an invariant ellipsoid contained in the polytope of interest of our original linear program) which can be computed using basic linear algebra and quadratic programs. We show that these sets are nested and, that under some mild assumptions, $I_r(E)=\mathcal{S}$ for some $r$ (Lemma * and Corollary *). We then provide more effective inner approximations that take into consideration the objective function of our linear program and that, at each iteration, pick the optimal invariant ellipsoid $E_r$ as our invariant set. Using polar duality and Schur complements, we show in Theorem * how these sets can be computed efficiently by solving a semidefinite program.

\aaa{In Section~\ref{sec:R-ULD-LP}, we study robust to \emph{uncertain \aaa{and time-varying}} linear dynamics linear programs (R-UTVLD-LPs), which are RDO problems where the optimization problem is  a linear program and the dynamical system is  linear, but it is uncertain and time-varying. 
\st{Section~3 studies a more intricate class of RDO problems. Here, the optimization problem is still a linear program and the dynamical system is still linear, but it is uncertain and time-varying. We call such problems R-UTVLD-LPs (for robust to uncertain and time-varying linear dynamics linear programs).}
This problem class also captures certain nonlinear time-invariant dynamics in the presence of uncertainty.} The algorithmic questions get considerably more involved here; e.g., even testing membership of a given point to the feasible set of an \aaa{R-UTVLD-LPs} is undecidable. The goal of the section is to generalize (to the extent possible) the results of Section~\ref{sec:R-LD-LP}, both on outer and inner approximations of the feasible set. \aaa{\st{Hence, the structure of Section~3 parallels that of Section~2. We have however chosen to focus in Section~3 mainly on the differences that arise from this more complicated setup.}} \aaa{To do this, we replace} the notion of the spectral radius with that of the joint spectral radius (see Definition~\ref{def:jsr}), and ellipsoidal invariant sets with invariant sets that are a finite intersection of ellipsoids (cf. Theorem~\ref{thm:jsr.max.of.quadratics}). \aaa{\st{With the right generalizations, proofs}} \aaa{We prove finite convergence of both inner and outer approximations and give a semidefinite formulation of inner approximations that are aligned with the objective function (cf. Lemma~\ref{lem:I_rE.properties.switched}, Theorems~\ref{thm:finite.switched} and~\ref{thm:inner.SDP.switched}).} \aaa{\st{The more basic results from Section~2, which can be easily extended to Section~3, have only been proven once---in the R-LD-LP section. This is to make the main ideas of these proofs more accessible to the reader, without the burden of extra notation.}} %Their extension to R-ULD-LP is trivial and omitted. 
\aaa{Finally, in Section \ref{sec:permutation}, we provide a polynomial-size extended LP formulation of R-UTVLD-LP problems where the dynamical system is described by the set of permutation matrices.}

We conclude \aaa{in Section~\ref{sec:opt.with.DS}} by noting that RDO can be inscribed in a broader class of problems, those that we refer to as ``optimization problems with dynamical system constraints''. \aaa{\st{This framework is described in Section~4.}} We believe that problems of this type form a \aaa{rich} \aaa{\st{very rich and compelling}} class of optimization problems and hope that our paper will instigate further research in this area.

\section{Robust to linear dynamics linear programming}\label{sec:R-LD-LP}
%==========================================================
We define a \emph{robust to linear dynamics linear program} (R-LD-LP) to be an optimization problem of the form\footnote{The dependence of the objective function on just the initial condition is without loss of generality. Indeed, if the objective instead read $\sum_{k=0}^N \hat{c}_k^Tx_k,$ we could let $c^T=\sum_{k=0}^N \hat{c}_k^TG^k$ in (\ref{eq:R-LD-LP}) to have an equivalent problem. }
\begin{equation}\label{eq:R-LD-LP}
\min_{x_0\in\R^n}\{c^Tx_0: x_k\in P \ \mbox{for}\ k=0,1,2,\ldots, \mbox{u.t.d.} \ x_{k+1}=Gx_k\},
\end{equation}
where $P\mathrel{\mathop:}=\{x\in\R^n|\ Ax\leq b\}$ is a polyhedron and $G\in\mathbb{R}^{n\times n}$ is a given matrix. One can equivalently formulate an R-LD-LP as a linear program of a particular structure with an infinite number of constraints:
\begin{equation}\label{eq:R-LD-LP2}
\min_{x\in\R^n}\{c^Tx: G^kx\in P \ \mbox{for} \ k=0,1,2,\ldots\}.
\end{equation}
The input to an R-LD-LP is fully defined\footnote{Whenever we study complexity questions around an R-LD-LP, we use the standard Turing model of computation (see, e.g.,~\cite{sipser2006introduction}) and consider instances where the entries of $c,A,b,G$ are rational numbers and hence the input can be represented with a finite number of bits.} by $c\in\R^n,A\in\R^{m\times n},b\in\R^m,G\in\R^{n\times n}.$
%
%A dynamical system of the form $$x_{k+1}=Gx_k$$ is what we call, for obvious reasons, a linear dynamical system in discrete time.
%
%We assume throughout the paper that the polyhedron $P$ is bounded; i.e., it is a polytope. 
Note that a linear program can be thought of as a special case of an R-LD-LP where the matrix $G$ is the identity matrix. Problem (\ref{eq:R-LD-LP}), or equivalently problem (\ref{eq:R-LD-LP2}),  has a simple geometric interpretation: we are interested in optimizing a linear function not over the entire polyhedron $P$, but over a subset of it that does not leave the polyhedron under the application of $G$, $G^2$, $G^3$, etc. Hence, the feasible set of an R-LD-LP is by definition the following set
\aaa{\begin{equation}\label{eq:feasible.set.R-LD-LP}
\mathcal{S}\mathrel{\mathop:}=\bigcap_{k=0}^{\infty} \{x\in\R^n|\ AG^kx\leq b  \}
=\bigcap_{k=0}^{\infty} \bigcap_{i=1}^{m} \{x\in\R^n|\ a_i^TG^kx\leq b_i  \}
%\subset P.
\end{equation}
where $a_i^T$ and $b_i$ denote the $i$th row of $A$ and $b$, respectively.
When written in this form, it is clear that R-LD-LP is a special case of a linear semi-inifite program (see, e.g., \cite[Sect. 4]{LOPEZ2007491}). In this section, we exploit the special dynamics-based structure of R-LD-LP to characterize its complexity and determine subclasses of it that can be solved in polynomial time.}

We start with a theorem on the basic properties of \aaa{the set $\mathcal{S}$ in \eqref{eq:feasible.set.R-LD-LP}}. We recall that a set $T\subseteq\mathbb{R}^n$ is said to be \emph{invariant} under a dynamical system, if all trajectories of the dynamical system that start in $T$ remain in $T$ forever. We will simply say that a set is invariant if the underlying dynamical system is clear from the context.

%\label{lem:S.closed.convex.invariant}
%\label{prop:S.closed.convex.invariant.nphard}
\begin{theorem}\label{thm:S.closed.convex.invariant.non.polyhedral.nphard}
%The set $\mathcal{S}$ in (\ref{eq:feasible.set.R-LD-LP}) is closed, convex, and invariant. Moreover, testing membership of a given point to $\mathcal{S}$ is NP-hard.
The set $\mathcal{S}$ in (\ref{eq:feasible.set.R-LD-LP}) has the following properties:
\begin{enumerate}[(i)]
\item It is closed, convex, and invariant.
\item It is not always polyhedral (even when $A,b,G$ have rational entries).
\item Testing membership of a given point to $\mathcal{S}$ is NP-hard.
\end{enumerate}
\end{theorem}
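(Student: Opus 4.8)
The plan is to establish the three claims one at a time, since they rely on rather different ideas.

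\textbf{Part (i).} The set $\mathcal{S}$ is an intersection of the closed half-spaces $\{x : AG^k x \le b\}$ over all $k \ge 0$, so it is automatically closed and convex (an intersection of closed convex sets). For invariance, I would argue directly: if $x_0 \in \mathcal{S}$, then $G^k x_0 \in P$ for all $k \ge 0$; applying $G$ once, the point $x_1 = G x_0$ satisfies $G^k x_1 = G^{k+1} x_0 \in P$ for all $k \ge 0$, hence $x_1 \in \mathcal{S}$. So $G\,\mathcal{S} \subseteq \mathcal{S}$, which is exactly invariance under the dynamics $x_{k+1} = G x_k$.

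\textbf{Part (ii).} Here the plan is to exhibit a concrete rational instance where $\mathcal{S}$ is not polyhedral. A natural candidate is to take $G$ to be a rotation-like matrix with spectral radius $1$ — e.g. a rational $2\times 2$ matrix that is a scalar multiple of an orthogonal matrix, or a rotation by an irrational multiple of $\pi$ with rational entries (such as the matrix $\frac{1}{5}\begin{pmatrix} 3 & -4 \\ 4 & 3\end{pmatrix}$, whose eigenvalues are on the unit circle but which is not of finite order) — together with a polyhedron $P$ that is a box or half-space not aligned with the orbit. Under such a $G$, the orbit $\{G^k x_0\}$ of a generic point densely fills a circle (or a bounded region), so the requirement that the entire orbit stay in $P$ forces $\mathcal{S}$ to be (contained in) a Euclidean ball or a more curved region, which cannot be polyhedral. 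I would pin down one such pair $(P,G)$ and verify that $\mathcal{S}$ has a non-polyhedral (e.g. strictly convex) piece of boundary; the only subtlety is choosing $P$ and checking the resulting $\mathcal{S}$ is still full enough to be genuinely non-polyhedral rather than, say, a single point or empty.

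\textbf{Part (iii).} This is the main obstacle. I expect an NP-hardness reduction that exploits the fact that membership of $x$ in $\mathcal{S}$ requires $AG^k x \le b$ for \emph{all} $k$, which couples combinatorial structure across time steps. A promising route is to reduce from a known NP-hard problem about matrices or integer feasibility — for instance, from the problem of deciding whether a matrix has spectral radius at least (or at most) $1$, or from \textbf{SUBSET-SUM} / \textbf{PARTITION} encoded into a block-diagonal or companion-type $G$ so that $G^k x$ encodes the value of a candidate combinatorial solution after $k$ steps, and the polyhedral constraint $AG^k x \le b$ is violated for some $k$ exactly when the combinatorial instance is a \textbf{yes} instance. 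Concretely, I would try to build $G$ whose powers generate all the partial sums (or all subset sums) of a given integer list, arrange $P$ so that staying in $P$ forever is equivalent to none of these sums hitting a forbidden target, and thereby turn \textbf{PARTITION} into a non-membership (or membership, after complementation) query for $\mathcal{S}$. The delicate points will be (a) ensuring the construction uses only polynomially many bits, (b) handling the ``for all $k$'' quantifier — one typically needs the relevant dynamics to be eventually trivial or periodic so that only finitely many $k$ matter, or else to argue that the infinitely many remaining constraints are automatically satisfied — and (c) getting the direction of the reduction right (NP-hardness of membership versus of non-membership, since $\mathcal{S}$ is convex and closed but its complement need not have an obvious certificate). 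I would look to the joint-spectral-radius / matrix-mortality literature, which contains reductions of exactly this flavor, to supply the right gadget.
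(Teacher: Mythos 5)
Your parts (i) and (ii) follow essentially the paper's own route: part (i) is the same intersection-of-closed-convex-sets plus one-step shift argument, and part (ii) is precisely the paper's construction (the paper takes $P=[-1,1]^2$ and the rational rotation $G=\frac{1}{5}\bigl(\begin{smallmatrix}4 & 3\\ -3 & 4\end{smallmatrix}\bigr)$, invokes Niven's theorem to get that the rotation angle is an irrational multiple of $\pi$, and uses density of the orbit on each circle of radius $r_0>1$ to conclude $\mathcal{S}$ is the closed unit disk). Your sketch for (ii) is left unexecuted but the missing steps (Niven, equidistribution on the circle) are routine, so I would not count that as a gap.

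Part (iii) is where there is a genuine gap: you have not given a reduction, and the concrete source problems you propose do not obviously work. Deciding whether a single rational matrix has spectral radius at least (or at most) $1$ is solvable in polynomial time (the paper itself uses this fact), so that is not a valid NP-hard starting point. The SUBSET-SUM/PARTITION idea founders on exactly the difficulty you flag yourself: the constraints $AG^kx\le b$ range over the single orbit $\{G^kx\}_{k\ge 0}$ of one point, and it is not clear how to make the powers of one fixed matrix, applied to one fixed vector, enumerate all subset sums of a list while keeping control of the ``for all $k$'' quantifier and the bit size; you give no gadget that does this. The paper resolves the issue differently: it reduces from the NP-hard problem (Blondel--Tsitsiklis) of deciding, for a given digraph $\Gamma$ on $n$ nodes, whether there exists $\hat k\ge 1$ such that $\Gamma$ has no directed path of length $\hat k$ from node $1$ to node $n$. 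Taking $G$ to be the adjacency matrix, $z=(0,\ldots,0,1)^T$, and a single inequality with $A=(-1,0,\ldots,0)G$, $b=-\tfrac12$, one gets $AG^kz\le b$ for all $k\ge 0$ exactly when paths of every length exist, so $z\notin\mathcal{S}$ iff the instance is a yes-instance; the infinite family of constraints is handled automatically because the nonnegativity of powers of the adjacency matrix makes each constraint a direct path-counting statement, with no need for eventual periodicity. Your pointer to the mortality/JSR literature is in the right neighborhood, but without this (or an equivalent) gadget the hardness claim remains unproved; note also that, as you suspected in point (c), the natural statement one proves this way is hardness of testing non-membership, which is how the paper phrases its reduction.
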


\begin{proof} We prove the three statements separately.

(i) Convexity and closedness are a consequence of the fact that polyhedra are convex and closed, and that (infinite) intersections of convex (resp. closed) sets are convex (resp. closed). Invariance is a trivial implication of the definition: if $x\in\mathcal{S}$, then $Gx\in\mathcal{S}$.

(ii) Clearly the set $\mathcal{S}$ can be polyhedral (consider\aaa{, e.g.,} the case where $G=I$, i.e., the identity matrix). We give an example where it is not. Consider an R-LD-LP in  $\mathbb{R}^2$ where $P=[-1,1]^2$ and 
\begin{equation}\label{eq:example2}
%A=\begin{bmatrix}~~1 & ~~0 \\-1 &~~0 \\ ~~0 & ~~1\\ ~~0& -1\end{bmatrix},
%b=\begin{bmatrix}1 \\ 1 \\ 1 \\ 1\end{bmatrix}\text{~~and~~}
G=\begin{bmatrix} ~~4/5 &~ 3/5 \\-3/5 &~4/5 \end{bmatrix}
  =\begin{bmatrix} ~~\cos(\theta) &\sin(\theta) \\-\sin(\theta) &\cos(\theta) \end{bmatrix},
\end{equation}
%{ Notice that the origin is contained in the interior of $P$ and  $\rho(G)= 1$.}
with $\theta=\arcsin(3/5)$. This is the smallest angle (in radians) of the right triangle with sides 3, 4, and 5. From Niven's theorem \cite[Corollary 3.12]{NivensBook}, if $\bar{\theta}\in[0,\pi/2]$ and $\sin(\bar{\theta})$ is rational, then $\bar{\theta}/\pi$ is not rational unless $\bar{\theta}\in\{0,\pi/6,\pi/2\}.$ Consequently,  the number $\theta/\pi$ is irrational, which means that $\theta$ and $\pi$ are rationally independent.
%From Niven's theorem \cite[Corollary 3.12]{NivensBook}, if $\sin(\theta)$ is rational for $\pi/2\ge\theta\ge0$, then $\theta/\pi$ is not rational unless $\theta\in\{0,\pi/6,\pi/2\}.$ 
%Consequently,  the number $\theta/\pi$ is irrational and therefore $\theta$ and $\pi$ are rationally independent.
Also note that $G$ is a rotation matrix that rotates points in the $xy$-Cartesian plane clockwise by $\theta$.
In other words, using polar coordinates with the convention $x_k=(r_k,\phi_k)$, the feasible region of this R-LD-LP consists of points $x_0=(r_0,\phi_0)$ such that  $(r_0,\phi_0+k\theta)\in P$ for all integers $k\geq 0$.

%$2\pi>\beta_1>\beta_2\ge0,$

Notice that the closed disk $D$ centered at the origin with radius $1$ is contained in $P$ and consequently $(r_0,\phi_0+k\theta)\in P$ for all $k$ provided that $r_0\le1$.
On the other hand, consider a point $x_0=(r_0,\phi_0)\in P$ such that $r_0>1.$ Let $C$ be the circle centered at the origin with radius $r_0$. Clearly, $x_k\in C,\forall k$.
Furthermore, note that for any fixed $r_0>1$, there exist scalars $\beta_1,\beta_2,$ with $0\leq \beta_1<\beta_2<2\pi,$ such that none of the points in $C$ on the arc between $(r_0,\beta_1)$ and $(r_0,\beta_2)$  belong to $P$. However, as $\theta$ and $2\pi$ are rationally independent, we must have $\phi_0+k\theta\in (\beta_1,\beta_2)$ for some integer $k\geq 1$ (see, e.g.,~\cite[Chapter 3, Theorem~1]{CFS82}).  Consequently, $x_0$ is feasible if and only if $r_0\leq 1$, i.e., $\mathcal{S}=D$.

%But, for large enough $k>1$, we have $\phi_0+k\theta\in [\beta_1,\beta_2]$ as $\theta$ and $2\pi$ are  rationally independent (see \cite[Chapter 3, Theorem 1]{CFS82}). Consequently, $x_0$ is feasible if and only if $r_0\leq 1$, i.e., $\mathcal{S}=D$. 

If we let ${S}_r\mathrel{\mathop:}=\bigcap_{k=0}^{r} \{x\in\R^2|\ G^kx\in[-1,1]^2  \}$, Figure~\ref{fig:not.polyhedral} depicts that as $r$ increases, more and more points leave the polytope $P,$ until nothing but the unit disk $D$ is left.

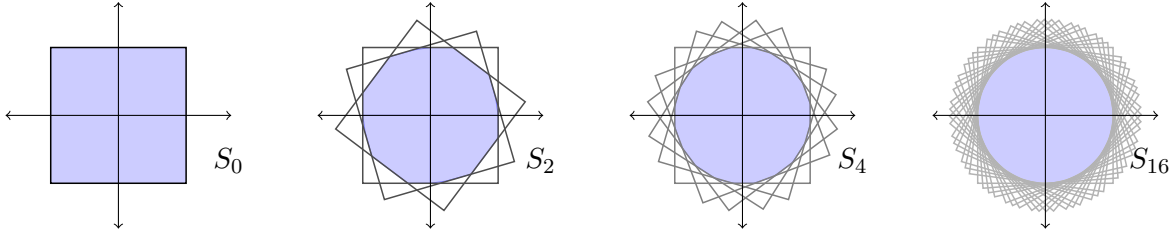
\begin{figure}[h]\centering
\begin{tikzpicture}[scale = 0.3]
\foreach \i in {0,...,0}\draw[black,fill=blue!20, line width=0.2mm, rotate around={36.87*\i:(0,0)}] (-3,-3) rectangle (3,3);
\draw [<->](-5,0) -- (5,0); \draw [<->](0,-5) -- (0,5); %x and y-axis
\coordinate  [label=left:$S_0$](foo) at (6,-2); 
\end{tikzpicture}
%~~~~~
%\begin{tikzpicture}[scale = 0.3]
%\foreach \i in {0,...,3}\draw[orange!70,fill=orange!70, rotate around={90*\i:(0,0)}] (0,-3.1)--(-1.8,-2.5)--(-2.9,-1)--(-3,0)--(0,0)--cycle;
%\foreach \i in {0,...,2}\draw[black!70, line width=0.2mm, rotate around={36.87*\i:(0,0)}] (-3,-3) rectangle (3,3);
%\draw [<->](-5,0) -- (5,0); \draw [<->](0,-5) -- (0,5); %x and y-axis
%\coordinate  [label=left:$S_2$](foo) at (6,-2); 
%\end{tikzpicture}f
~~~~~
\begin{tikzpicture}[scale = 0.3]
\foreach \i in {0,...,3}\draw[blue!50,fill=blue!20, rotate around={90*\i:(0,0)}] (0,-3)--(-1,-3)--(-2.6,-1.8)--(-2.9,-0.7)--(-3,0)--(0,0)--cycle;
\foreach \i in {0,...,2}\draw[black!70, line width=0.2mm, rotate around={-36.87*\i:(0,0)}] (-3,-3) rectangle (3,3);
\draw [<->](-5,0) -- (5,0); \draw [<->](0,-5) -- (0,5); %x and y-axis
\coordinate  [label=left:$S_2$](foo) at (6,-2); 
\end{tikzpicture}
~~~~~
\begin{tikzpicture}[scale = 0.3]
\foreach \i in {0,...,3}\draw[blue!50,fill=blue!20, rotate around={90*\i:(0,0)}] (0,-3)--(-.4,-3)--(-1.2,-2.8)--(-2.1,-2.2)--(-2.7,-1.4)--(-3,-.5)--(-3,0)--(0,0)--cycle;
\foreach \i in {0,...,4}\draw[black!50, line width=0.2mm, rotate around={36.87*\i:(0,0)}] (-3,-3) rectangle (3,3);
\draw [<->](-5,0) -- (5,0); \draw [<->](0,-5) -- (0,5); %x and y-axis
\coordinate  [label=left:$S_4$](foo) at (6,-2); 
\end{tikzpicture}
~~~~
\begin{tikzpicture}[scale = 0.3]
\draw[yellow!20,fill=blue!20] (0,0) circle (3cm); 
\foreach \i in {0,...,16}\draw[black!30, line width=0.2mm, rotate around={36.87*\i:(0,0)}] (-3,-3) rectangle (3,3);
\draw [<->](-5,0) -- (5,0); \draw [<->](0,-5) -- (0,5); %x and y-axis
\coordinate  [label=left:$S_{16}$](foo) at (6,-2); 
\end{tikzpicture}
\caption{The construction in the proof of Theorem~\ref{thm:S.closed.convex.invariant.non.polyhedral.nphard}, part (ii).} \label{fig:not.polyhedral}
\end{figure}
(iii) We now prove that the following decision problem is NP-hard even when $m=1$: Given $z\in\mathbb{Q}^n, A\in\mathbb{Q}^{m\times n}, b\in\mathbb{Q}^m, G\in\mathbb{Q}^{n\times n} $, test if $z\notin\mathcal{S}$. We show this via a polynomial-time reduction from the following decision problem, which is known to be NP-hard (see~\cite[Corollary 1.2]{blondel2002presence}): Given a directed graph $\Gamma$ on $n$ nodes, test if there exists an integer $\hat{k}\geq 1$ such that $\Gamma$ has no directed paths\footnote{Following the convention of~\cite{blondel2002presence}, we allow paths to revisit the same node several times.} of length $\hat{k}$ starting from node $1$ and ending at node $n$.

%are directed paths\footnote{Following the convention of *, we allow paths to revisit the same node several times.} of all possible lengths $k\geq 1$ starting from node $1$ and ending in node $n$.

% test if there are directed paths of all possible lengths $k\geq 1$ starting from node $1$ and ending in node $n$.

%\footnote{Following the convention of *, we allow paths to revisit the same node several times. Also, reference * states that this decision problem is co-NP-hard, meaning that the problem of testing if there are \emph{not} directed paths of all possible lengths $k\geq 1$ from node $1$ to node $n$ is NP-hard. We do not make this distinction as we allow, by convention, for a polynomial-time reduction to possibly flip the yes/no answer to all instances instead of merely preserving it.} 

%Given a directed graph $\Gamma$ on $n$ nodes, test if there are directed paths of all possible lengths $k\geq 1$ starting from node $1$ and ending in node $n$. 

Let $G\in\mathbb{Q}^{n\times n}$ be the adjacency matrix of the graph $\Gamma$, i.e., a matrix with its $(i,j)$-th entry equal to $1$ if there is an edge from node $i$ to node $j$ in $\Gamma$ and equal to $0$ otherwise. Let $z=(0,\ldots,0,1)^T\in\mathbb{Q}^n$, $A=(-1,0,\ldots,0)G\in\mathbb{Q}^{1\times n}$, and $b=-\frac{1}{2}$. Let $\mathcal{S}$ be as in (\ref{eq:feasible.set.R-LD-LP}). We claim that $z\notin\mathcal{S}$ if and only if
for some integer $\hat{k}\geq 1$, $\Gamma$ has no directed paths of length $\hat{k}$ from node $1$ to node $n$. To argue this, recall that the $(i,j)$-th entry of $G^k$ is greater than or equal to one if and only if there exists a directed path of length $k$ in $\Gamma$ from node $i$ to node $j$. Suppose first that for all integers $k\geq 1$, $\Gamma$ has a directed path of length $k$ from node 1 to node $n.$ This implies that $(1,\ldots,0)G^{k}(0,\ldots,1)^T\geq 1,$ for $k=1,2,3,\ldots$, which in turn implies that $AG^{k-1}z \leq -1$ for $k=1,2,\ldots$. As $b=-\frac{1}{2}$, we have $AG^{k}z \leq b$, for $k=0,1,\ldots,$ and hence $z \in \mathcal{S}.$ 

Suppose next that for some integer $\hat{k}\geq 1$, $\Gamma$ has no directed paths of length $\hat{k}$ from node $1$ to node $n$. This implies that $(1,\ldots,0)G^{\hat{k}}(0,\ldots,1)^T=0$ and hence we have $AG^{\hat{k}-1}z =0>b$. As $z \notin S^{\hat{k}-1}$, $z \notin \mathcal{S}$.
\end{proof}

%
%
%
% %AAA: removing remark about affine term in the dynamics, because the reader not even have this question, and I don't know how to justify the assumption of no eigenvalue at 1.
%[*Maybe make this a remark -- add to it why we only look at $x_0$*]
%Note that we do not have an affine term in the system dynamics above without loss of generality. 
%If the dynamical system is defined by $x_{k+1}=Gx_k+d$, and therefore 
%$$x_k=G^kx_0+\sum_{t=0}^{k-1}G^td,$$ 
%one can simply shift the coordinate axis with the equilibrium point $ (I-G)^{-1}d$ and construct an equivalent sytem where  $y_k=x_k- (I-G)^{-1}d$. 
%Here we are assuming that  $ (I-G)$ is invertable (in other words, we assume that the matrix $G$ does not have an eigenvalue at 1).
%Notice that
%\beqa
%y_{k+1}= x_{k+1}- (I-G)^{-1}d &=& Gx_k+d- (I-G)^{-1}d\\[.2cm]
%&=&G(y_k+ (I-G)^{-1}d)+d- (I-G)^{-1}d\\[.2cm]
%&=& Gy_k+ G(I-G)^{-1}d+d- (I-G)^{-1}d \\[.2cm]
%&=&  Gy_k.
%\eeqa
%Consequently, the new dynamical system $y_{k+1}=Gy_k$ %(with out the affine term) 
%together with the translated poltope 
%$$P'=\{y\in\R^n\::\:A(y+ (I-G)^{-1}d)\leq b\}=\{y\in\R^n\::\:Ay\leq b-A (I-G)^{-1}d\}$$
%gives an equivalent optimization problem with $x_k=y_k + (I-G)^{-1}d$.
%
%
%
%  a {\em true} fact that i didn't learn in high school? $$ (I-G^k)(I-G)^{-1}  ~ =^?  ~\sum_{t=0}^{k-1}G^t $$

Part (iii) of Theorem~\ref{thm:S.closed.convex.invariant.non.polyhedral.nphard} implies that in full generality, the feasible set of an R-LD-LP is unlikely to have a tractable description. The situation for particular instances of the problem may be nicer however. Let us work through a concrete example to build some geometric intuition.

%Let us now work through a concrete example of an R-LD-LP and build some geometric intuition.
%We next give an example.

\begin{Example}\label{ex:RLDLP_example}
	Consider an R-LD-LP defined by the following data:
	\begin{equation}\label{eq:example}
	A=\begin{bmatrix} -1 & 0 \\0 &-1 \\ 0 & 1\\ 1& 1\end{bmatrix}, 
	b=\begin{bmatrix} 1 \\ 1 \\ 1 \\3 \end{bmatrix}, 
	c=\begin{bmatrix} -1 \\ 0 \end{bmatrix}, 
	G=\begin{bmatrix} 0.6 & -0.4 \\0.8 &0.5 \end{bmatrix}.
	\end{equation}
	
For $k=0,1,\ldots,$	let  $P^k\mathrel{\mathop:}=\{x\in\R^2|~ AG^kx\le b\}$, so that the feasible solutions to the problem belong to the set 
	$\mathcal{S}=\bigcap_{k=0}^{\infty}P^k$.
	In Figure~\ref{fig:RLDLP.example1}, we show $P^0$,   $P^1,$ and $P^0\cap P^1.$ %together with $P^0\setminus P^1$.
	
	\newcommand{\theaxis}
	{\draw [<->] (-2.5,0) -- (4.2,0)  node[right]{$x_1$}; \draw [<->] (0,-9.5) -- (0,4.5)  node[right]{$x_2$};}
	\newcommand{\theaxisa}
	{\draw [<->] (-2.5,0) -- (4.2,0)  node[right]{$x_1$}; \draw [<->] (0,-6.5) -- (0,3.5)  node[right]{$x_2$};}
	
	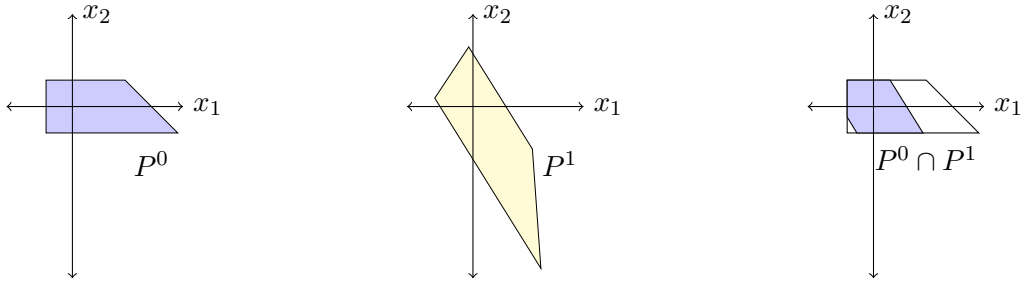
\begin{figure}[h]
		\begin{multicols}{3}
			{  \begin{tikzpicture}[scale=.35] \coordinate  [label=above:$P^1$](foo) at (.5,-1); 
				\draw [line width=.01cm, fill=blue!20] (-1,1) -- (2,1) -- (4,-1) -- (-1,-1) -- cycle ;%P0
				\theaxisa \coordinate  [label=above:$P^0$](foo) at (3,-3); 
				\end{tikzpicture} }
			
			\columnbreak
			{  \begin{tikzpicture}[scale=.35] 
				\draw [line width=.01cm, fill=yellow!20] (-0.161,2.258) -- (2.258,-1.61) -- (2.58,-6.13) -- (-1.45,0.32) -- cycle ;%P1
				\theaxisa \coordinate  [label=above:$P^1$](foo) at (3.3,-3); 
				\end{tikzpicture} }
			
			\columnbreak
			{  \begin{tikzpicture}[scale=.35] 
				\draw [fill=white!20] (-1,1) -- (2,1) -- (4,-1) -- (-1,-1) -- cycle ;%P0
				%\draw (-0.161,2.258) -- (2.258,-1.61); \draw  (2.58,-6.13) -- (-1.45,0.32) ;%P1 cuts
				\draw [fill=blue!20] (-1,1) -- (.63,1) -- (1.88,-1) -- (-.63,-1) -- ( -1,-.4) -- cycle ;%P0 cap P1
				\theaxisa \coordinate  [label=above:$P^0\cap P^1$](foo) at (2,-2.8); 
				\end{tikzpicture} }
		\end{multicols}
		\caption{ $P^0\cap P^1.$} \label{fig:RLDLP.example1}
	\end{figure}

	In Figure~\ref{fig:RLDLP.example2}, we show $P^0\cap P^1$, $P^2$, and $P^0\cap P^1\cap P^2,$ which happens to be equal to $\mathcal S.$ This is because $(P^0\cap P^1\cap P^2)\subseteq P^3$, which implies (see Lemma~\ref{lemma:termination} in Section~\ref{subsec:outer.approx.LDLP}) that $(P^0\cap P^1\cap P^2)\subseteq P^k$ for all $k>2$. 
	The optimal value for this example is achieved at the rightmost vertex  of $\mathcal{S}$ and is equal to $1.1492$. 	
	%T^-1=[ 0.8064516129032258	0.6451612903225806 ; -1.2903225806451613	0.9677419354838709]
	% (-1,1) (−0.16129032258065,2.25806451612907) (1.32674297606663,2.39334027055158)(2.614044510087,0.604209324963947)
	% ( 2,1) (2.25806451612904,−1.61290322580653)(0.780437044745011,−4.47450572320509)(−2.25739317243473,−5.33718237051465)
	%( 4,-1) (2.58064516129034,−6.12903225806467)(−1.87304890738826,−9.26118626430825)(−7.48548219260882,−6.5456010204424)
	% (-1,-1) (−1.45161290322581,0.32258064516133)(−0.962539021852213,2.18522372528625)(0.633580611594148,3.35671847202179)
	% (-1,1) 	(-0.161,2.258) 		(1.33,2.39)		(2.614,0.6)
	% ( 2,1)	(2.258,-1.61)		(0.78,-4.47)	(-2.257,-5.3)
	%( 4,-1) 	(2.58,-6.13)		(-1.87,-9.26)	(-7.485,-6.55)
	% (-1,-1) 	(-1.45,0.32)		(-0.96,2.185)	(0.63,3.36)	
	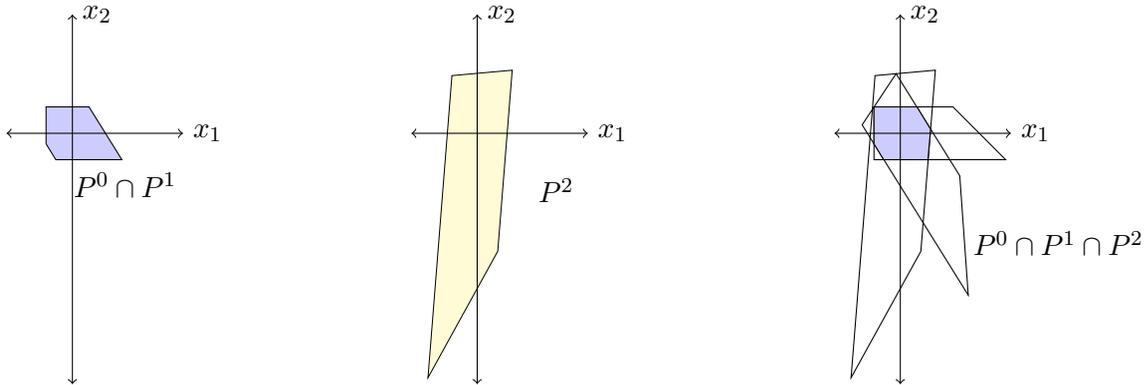
\begin{figure}[h]\begin{multicols}{3}
			{  \begin{tikzpicture}[scale=.35] 
				\draw [fill=blue!20] (-1,1) -- (.63,1) -- (1.88,-1) -- (-.63,-1) -- ( -1,-.4) -- cycle ;%P0 cap P1
				\theaxis \coordinate  [label=above:$P^0\cap P^1$](foo) at (2,-2.8); 
				\end{tikzpicture} } 
			
			\hspace*{-.2cm}
			\columnbreak
			{  \begin{tikzpicture}[scale=.35] %P2
				\draw [line width=.01cm, fill=yellow!20] (1.33,2.39) -- (0.78,-4.47) -- (-1.87,-9.26) -- (-0.96,2.185) -- cycle ;
				\theaxis  \coordinate  [label=above:$P^2$](foo) at (3,-3); 
				\end{tikzpicture} }

			\hspace*{.1cm}
			\columnbreak 
			{  \begin{tikzpicture}[scale=.35] % P0 P1 P2 P3 
				\draw  (-1,1) -- (2,1) -- (4,-1) -- (-1,-1) -- cycle ;
				%\draw   (2.614,0.6) -- (-2.257,-5.3) -- (-7.485,-6.55) -- (0.63,3.36) -- cycle ; 
				\draw   (1.33,2.39) -- (0.78,-4.47) -- (-1.87,-9.26) -- (-0.96,2.185) -- cycle ;
				\draw  (-0.161,2.258) -- (2.258,-1.61) -- (2.58,-6.13) -- (-1.45,0.32) -- cycle ;
				\draw [fill=blue!20] (-1,1) -- (.63,1) -- (1.15,0.155) -- (1.05,-1) -- (-.63,-1) -- ( -1,-.4) -- cycle ;
				\coordinate  [label=above:$P^0\cap P^1\cap P^2$](foo) at (6,-5); 
				\theaxis
				\end{tikzpicture} }
		\end{multicols}
		\caption{ $\mathcal S=P^0\cap P^1\cap P^2.$} \label{fig:RLDLP.example2}
	\end{figure}		
\end{Example}

\subsection{Outer approximations to $\mathcal{S}$}\label{subsec:outer.approx.LDLP}
%========================================================================
For an integer $r\geq 0,$ let
\begin{equation}\label{eq:Sr.LDLP}
{S}_r\mathrel{\mathop:}=\bigcap_{k=0}^{r} \{x\in\R^n|\ AG^kx\leq b  \}.
\end{equation}
In view of the definition of $\mathcal{S}$ in (\ref{eq:feasible.set.R-LD-LP}), we have $$\mathcal{S}\subseteq \ldots S_{r+1}\subseteq S_r\subseteq \ldots\subseteq S_2\subseteq S_1\subseteq S_0=P.$$
%\oo{and $\mathcal{S}=\lim_{r\rightarrow\infty}S_r.$}
In other words, the sets $S_r$ provide polyhedral outer approximations to the feasible set \aaa{$\mathcal{S}$} of an R-LD-LP, which get tighter as $r$ increases. \aaa{It can be shown that the sets $S_r$ converge\footnote{\aaa{By convergence of the sequence $\{ S_r \}$ to $\mathcal{S}$, we mean (see \cite{Royset2020}) that $\forall x\in\mathbb{R}^n, \lim_{r\rightarrow\infty}\text{dist}(x,S_r)= \text{dist}(x,\mathcal{S}).$ Here, for a point  $x\in\mathbb{R}^n$ and a set $\Omega\subseteq \mathbb{R}^n$, $\text{dist}(x,\Omega)$ is defined as $\inf_{y\in\Omega} ||y-x||$.}} to $\mathcal{S}$ in the limit. By solving linear programs that minimize $c^Tx$ over $S_r$, one obtains a nondecreasing sequence of lower bounds on the optimal value of our R-LD-LP. Moreover, one can show that if $S_r$ is bounded for some $r$, this sequence converges to the optimal value of the R-LD-LP as $r\rightarrow\infty$.\footnote{\aaa{The assumption on boundedness of $S_r$ for some $r$ cannot be dropped as one can verify by considering the R-LD-LP instance given by $$A=[1~-1],\quad%\begin{bmatrix}1 & ~-1\end{bmatrix},
b=1,\quad
c=[-1~0]^T,\quad %\begin{bmatrix}1 & ~0\end{bmatrix},
G~=~\begin{bmatrix}1 & ~0 \\0 &~1/2\end{bmatrix}.$$ Note also that when $P$ is bounded, this assumption is clearly satisfied.}}}

%P: x1-x2 <= 1
%G: [1      0]
%     [0  1/2]
%$G~=~\begin{pmatrix}1 & ~0 \\0 &~1/2\end{pmatrix}$
%Sr =  x1-x2 <= 1, x1 - (1/2)^r x2 <= 1

%S: x1-x2 <= 1, x1<=1

%c: max x1

% In this section, we study the following questions: 
 
We have already seen an example where convergence of $S_r$ to $\mathcal{S}$ is finite (Example~\ref{ex:RLDLP_example}), and one where it is not (cf. Theorem~\ref{thm:S.closed.convex.invariant.non.polyhedral.nphard}, part (ii)). Our goal in this subsection is to study the following two questions:

\begin{enumerate}[(i)]
\item What are the barriers that can prevent convergence of $S_r$ to $\mathcal{S}$ from being finite \aaa{(i.e., make $S_r\setminus\mathcal{S}\neq \emptyset$ for all positive integers $r$)?}
\item When convergence of $S_r$ to $\mathcal{S}$ is finite, \aaa{i.e.,} when $\mathcal{S}=S_{r^*}$ for some positive integer $r^*$, can we provide an efficiently computable upper bound on $r^*$?
\end{enumerate}

In regards to question (i), we show that there are three separate barriers to finite convergence \aaa{(meaning that in the presence of any of these conditions convergence may or may not be finite):} the matrix $G$ having spectral radius larger or equal to 1 (Proposition~\ref{prop:rho>1.convergence.not.finite} and Theorem~\ref{thm:S.closed.convex.invariant.non.polyhedral.nphard}, part (ii)), the origin being on the boundary of the polyhedron $P$ (Proposition~\ref{prop:convergence.not.finite.without.origin}), and the polyhedron $P$ being unbounded (Proposition~\ref{prop:P.unbounded.not.finite}). In regards to question (ii), we show that once these three barriers are removed, then $S_r$ reaches $\mathcal{S}$ in a number of steps that is not only finite, but upper bounded by a quantity that can be computed in polynomial time (Theorem~\ref{thm.polyhedrality}).

Before we prove these results, let us start with a simple lemma that allows us to detect finite termination.

\begin{lemma} \label{lemma:termination}
If ${S}_r=  {S}_{r+1}$  for some integer $r\ge0$, then $\mathcal S = {S}_{r}$.
Furthermore, for any fixed $r\geq 0$, the condition ${S}_r=  {S}_{r+1}$ can be checked in polynomial time.
\end{lemma}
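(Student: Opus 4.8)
The plan is to prove the first statement by an invariance/induction argument on the sets $S_r$, and the second statement by reducing the set-equality test to polynomially many linear programs.

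First I would establish the key structural identity
\[
S_{r+1} = S_0 \cap G^{-1}(S_r),
\]
where $G^{-1}(T)\mathrel{\mathop:}=\{x\in\R^n \mid Gx\in T\}$. Indeed, $x\in S_{r+1}$ means $AG^kx\le b$ for $k=0,1,\ldots,r+1$; the $k=0$ constraint says $x\in S_0=P$, and the constraints for $k=1,\ldots,r+1$ say $AG^{k-1}(Gx)\le b$ for $k-1=0,\ldots,r$, i.e. $Gx\in S_r$. Now suppose $S_r=S_{r+1}$ for some $r\ge 0$. I claim by induction on $j\ge r$ that $S_j=S_r$. The base case $j=r$ is trivial and $j=r+1$ is the hypothesis. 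For the inductive step, assuming $S_j=S_r=S_{r+1}$ (for $j\ge r+1$), apply the identity twice: $S_{j+1}=S_0\cap G^{-1}(S_j)=S_0\cap G^{-1}(S_r)=S_{r+1}=S_r$. Hence $S_j=S_r$ for all $j\ge r$, and since $\mathcal S=\bigcap_{k=0}^{\infty}\{x\mid AG^kx\le b\}=\bigcap_{j\ge 0}S_j=\bigcap_{j\ge r}S_j=S_r$, we conclude $\mathcal S=S_r$.

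For the complexity claim, note that $S_r=S_{r+1}$ is equivalent to $S_r\subseteq S_{r+1}$ (the reverse inclusion always holds), which in turn is equivalent to $S_r\subseteq\{x\mid AG^rx\le b\}$ since $S_{r+1}=S_r\cap\{x\mid AG^{r+1}x\le b\}$ — wait, more precisely $S_{r+1}=\bigcap_{k=0}^{r+1}\{AG^kx\le b\}$ and $S_r$ already enforces the first $r+1$ of these, so $S_r\subseteq S_{r+1}$ iff $S_r\subseteq\{x\mid AG^{r+1}x\le b\}$, i.e. iff every row $a_i^T$ of $A$ satisfies $\max\{a_i^TG^{r+1}x : x\in S_r\}\le b_i$. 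Each such maximum is the optimal value of a linear program over the polyhedron $S_r=\{x\mid AG^kx\le b,\ k=0,\ldots,r\}$, whose description has $m(r+1)$ inequalities in $n$ variables with rational data; one solves $m$ such LPs (plus, if desired, one feasibility check to handle $S_r=\emptyset$, in which case equality holds trivially). Since $r$ is fixed, the matrices $G^0,\ldots,G^{r+1}$ and hence the LP data are computable in polynomial time and have polynomial bit size, and linear programming is polynomial-time solvable, so the whole test runs in polynomial time.

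The only mildly delicate point — and the one I would be most careful about — is the reduction of the set-inclusion test to linear programs: one must check that the bit sizes of $AG^k$ for $k\le r+1$ remain polynomially bounded (true since $r$ is a fixed constant, so this is just $O(1)$ matrix multiplications) and handle the degenerate cases where $S_r$ is empty or the LP is unbounded (unboundedness of $a_i^TG^{r+1}x$ over a nonempty $S_r$ would certify $S_r\ne S_{r+1}$ unless the objective is constant, and an LP solver detects this). Everything else is routine.
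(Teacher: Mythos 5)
Your proposal is correct and follows essentially the same route as the paper: your recursion $S_{r+1}=S_0\cap G^{-1}(S_r)$ with induction is just a reformulation of the paper's observation that $S_r=S_{r+1}$ forces $S_r$ to be invariant (hence equal to $\mathcal{S}$), and your polynomial-time test via the $m$ linear programs $\max\{a_i^TG^{r+1}x : x\in S_r\}$ compared against $b_i$ is exactly the paper's procedure. Your added remarks on bit sizes for fixed $r$ and on the degenerate empty/unbounded cases are fine but not a departure in method.
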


\begin{proof}
We first observe that condition ${S}_r=  {S}_{r+1}$ implies that the set ${S}_r$ is invariant. 
If not, there would exist an $x\in {S}_r$ with $Gx\notin {S}_r$. But this implies that $x\notin {S}_{r+1}$, which is a contradiction. 
Invariance of $S_r$ implies that ${S}_r=\mathcal{S}$ and the first part of the claim is proven.

To show the second part of the claim, note that ${S}_{r+1}\subseteq {S}_{r}$ by definition, so only the reverse inclusion needs to be checked. For $i\in\{1,\ldots,m\},$ let $a_i$ denote the transpose of the $i$-th row of $A$. We can then solve, in polynomial time, $m$ linear programs   
\begin{equation}
\max_{x\in\R^n}\{a_i^TG^{r+1}x: x\in S_r\},
\end{equation}
and declare that ${S}_{r}= {S}_{r+1}$ if and only if the optimal value of the $i$-th program is less than or equal to $b_i$ for all $i\in\{1,\ldots,m\}$. 
%Suppose ${S}_{r+1}$ is defined by $m$ linear ineaqualities.
%To check the inclusion ${S}_{r}\subseteq {S}_{r+1}$, we can solve an LP feasibility problem for each one of these inequalities and test if there exists  a point in $S_r$ that violates the inequalities. The desired inclusion is true if and only if all of these LPs are infeasible. 
%\qed
\end{proof}

In view of this lemma, the reader can see that in Example~\ref{ex:RLDLP_example}, the observation that $S_2$ equals $S_3$ allowed us to conclude that $\mathcal{S}$ equals $S_2.$ We now characterize scenarios where convergence of $S_r$ to $\mathcal{S}$ is not finite. Note that this is equivalent to having $S_{r+1}\subset S_{r}$ for all $r\geq 0.$

%A closer look at these examples suggests that the spectral radius of $G$ has a role to play. 
 
Recall that the \emph{spectral radius} $\rho (G)$ of an $n\times n$ matrix $G$ is given by
$$\rho (G)=\max\{|\lambda_1|, \ldots, |\lambda_n|\},$$
where $\lambda_1, \ldots, \lambda_n$ are the (real or complex) eigenvalues of $G$. Theorem~\ref{thm:S.closed.convex.invariant.non.polyhedral.nphard}, part (ii) has already shown that when $\rho(G)=1,$ convergence of $S_r$ to $\mathcal{S}$ may not be finite. The following simple construction shows that the same phenomenon can occur when $\rho(G)>1,$ even when the set $\mathcal{S}$ is polyhedral.

\newcommand{\theaxis}{\draw [<->] (-2.5,0) -- (12.5,0)  node[right]{$x_1$}; \draw [<->] (0,-2.5) -- (0,12.5)  node[left]{$x_2$};
 \coordinate  [label=left:1](foo) at (0,10);  
 \coordinate  [label=below:-1](foo) at (1,-.3);\coordinate  [label=below:0](foo) at (5.5,-.3);\coordinate  [label=below:1](foo) at (10.5,-.3); 
 \draw (10,-.3)--(10,.3); \draw (5,-.3)--(5,.3); \draw (-.3,10)--(.3,10);}

%==============================================
\begin{figure}[h]\centering
\begin{tikzpicture}[scale = 0.3]
\draw[black, line width=.01cm, fill=orange!70] (-3,-3) rectangle (3,3);
\draw [<-> ](-5,0) -- (5,0); \draw [<->](0,-5) -- (0,5); %x and y-axis
\coordinate  [label=left:$S_0$](foo) at (6,2); 
\end{tikzpicture}
~~~~~
\begin{tikzpicture}[scale = 0.3]
\draw[black, line width=.01cm, fill=orange!70] (-1.5,-3) rectangle (1.5,3);
\draw [<->](-5,0) -- (5,0); \draw [<->](0,-5) -- (0,5); %x and y-axis
\coordinate  [label=left:$S_1$](foo) at (5,2); 
\end{tikzpicture}
~~~~~
\begin{tikzpicture}[scale = 0.3]
\draw[black!30, line width=.01cm, fill=orange!70] (-.21,-3) rectangle (.21,3);% (-.1875,-3) rectangle (.1875,3);
\draw [<->](-5,0) -- (5,0); \draw [<->](0,-5) -- (0,5); %x and y-axis
\coordinate  [label=left:$S_4$](foo) at (4,2); 
\end{tikzpicture}
~~~~~
\begin{tikzpicture}[scale = 0.3]
\draw [<->](-5,0) --(5,0); %x and y-axis
\draw [<-](0,-5) -- (0,-3); \draw [->](0,3) -- (0,5); %x and y-axis
\draw[line width=.05cm, orange!70] (0,-3) --(0,3);
\coordinate  [label=left:$\mathcal S$](foo) at (3,2); 
\end{tikzpicture}
~~~~~
\caption{ The construction in the proof of Proposition~\ref{prop:rho>1.convergence.not.finite}.
} \label{fig:convergnce.not.finite}
 \end{figure}
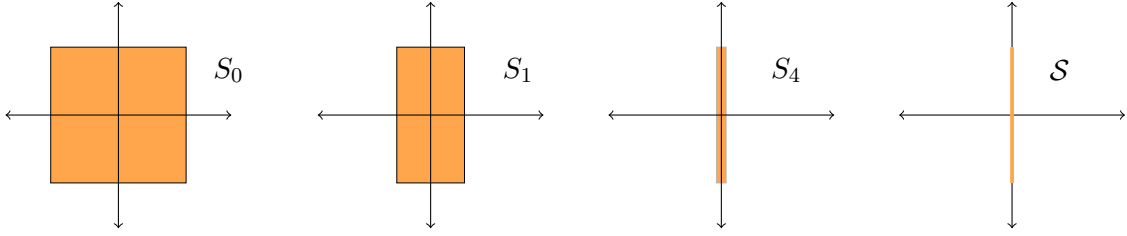

\begin{proposition}\label{prop:rho>1.convergence.not.finite}
If $\rho(G)>1,$ then convergence of ${S}_r$ to $\mathcal{S}$  may not be finite even when $P$ is a bounded polyhedron that contains the origin in its interior. % (and even when $\mathcal{S}$ is polyhedral).
\end{proposition}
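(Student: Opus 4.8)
The plan is to produce a single explicit two-dimensional instance in which $S_r$ strictly shrinks at every step, so that $\mathcal{S}$ is attained only in the limit, while all of the ``good'' hypotheses except $\rho(G)<1$ are met. Reading off Figure~\ref{fig:convergnce.not.finite}, the natural choice is $P=[-1,1]^2$, which is a bounded polyhedron containing the origin in its interior, together with $G=\mathrm{diag}(2,1)$, whose spectral radius is $2>1$. Because $G$ is diagonal, $G^k=\mathrm{diag}(2^k,1)$, so the constraint $G^k x\in P$ is simply $2^k|x_1|\le 1$ and $|x_2|\le 1$, and these conditions decouple across the coordinates and across $k$.

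From here the computation is one line: among $k=0,\dots,r$ the tightest bound on $x_1$ is the one at $k=r$, so $S_r=[-2^{-r},2^{-r}]\times[-1,1]$. Consequently the point $(2^{-r},0)$ belongs to $S_r$ but not to $S_{r+1}$, hence the inclusion $S_{r+1}\subseteq S_r$ is strict for every $r\ge 0$; equivalently, in the language of Lemma~\ref{lemma:termination}, the stabilization test $S_r=S_{r+1}$ never succeeds and convergence of $S_r$ to $\mathcal{S}$ is not finite. Intersecting over all $r$ gives $\mathcal{S}=\{0\}\times[-1,1]$, which also records the stronger fact (mentioned in the discussion preceding the proposition) that this failure of finite convergence can occur even when $\mathcal{S}$ itself is polyhedral.

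I do not anticipate a genuine obstacle here. The only points requiring any care are (a) checking that no earlier index already stabilizes, which is clear since $2^{-r}$ is strictly decreasing, and (b) confirming that the hypotheses we are deliberately not invoking---$\rho(G)>1$, $P$ bounded, and the origin in the interior of $P$---all hold, which is immediate by inspection. If a less special-looking example were desired, one could conjugate $G$ by any invertible matrix and replace $P$ by its image under the inverse map, but the diagonal instance already suffices and keeps every verification trivial.
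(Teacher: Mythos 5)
Your proposal is correct and follows essentially the same route as the paper: the paper also takes $P=[-1,1]^2$ with a diagonal matrix (there $G=\mathrm{diag}(a,1/a)$ with $a>1$), computes $S_r$ explicitly as a strip of width $2a^{-r}$, and concludes that the nested inclusions are strict forever while $\mathcal{S}$ is the vertical segment. Your variant $G=\mathrm{diag}(2,1)$ changes nothing essential in the argument.
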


\begin{proof}
%$S_0=\{x\in\R^2\::\: 1\ge x_1\ge-1, ~1\ge x_2\ge-1\}$
Let $a>0$ and consider an instance of an R-LD-LP with $P=[-1,1]^2$ and $G~=~\begin{pmatrix}a & ~0 \\0 &~1/a\end{pmatrix}$. %be the unit square on the plane. 
It is easy to see that for $r\geq 0$, $$S_r=\{x\in\R^2 | \   -a^{-r}\le x_1\le a^{-r}, -1\le x_2\le 1\}.$$
Hence, the set $\mathcal{S}$ is the line segment joining the points $(0,-1)$ and $(0,1)$, and convergence of $S_r$ to $\mathcal{S}$ is not finite. See Figure~\ref{fig:convergnce.not.finite} for an illustration.
\end{proof}

 \begin{proposition}\label{prop:convergence.not.finite.without.origin}
If the origin is not contained in the interior of $P,$ then convergence of ${S}_r$ to $\mathcal{S}$  
may not be finite even when $\rho(G)<1$ and $P$ is bounded. %$\mathcal S$ is polyhedral.
\end{proposition}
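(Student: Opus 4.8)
The plan is to exhibit an explicit two‑dimensional instance. Two preliminary observations shape the construction. First, since $\rho(G)<1$ every trajectory $G^kx$ tends to the origin, so $\mathcal S\ne\emptyset$ forces $0\in P$; and since $P$ is bounded, the $S_r$ are nested compacta with $\bigcap_rS_r=\mathcal S$, so $\mathcal S=\emptyset$ would force $S_r=\emptyset$ for some finite $r$. Hence the interesting regime is $0\in\partial P$, and then the active constraint of $P$ at the origin, say $a^\top x\le 0$, is necessarily homogeneous, so near $0$ membership in $\mathcal S$ amounts to the family $a^\top G^kx\le 0$, $k\ge 0$. Second, one must avoid rotation‑type dynamics: an irrational rotation sweeps the covector $a^\top G^{k}$ around the whole circle, and $\bigcap_{k=0}^{r}\{a^\top G^kx\le 0\}$ then collapses to $\{0\}$ after finitely many steps. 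The remedy is a diagonal contraction with two distinct rates together with a facet of $P$ not aligned with the coordinate axes, so that the pulled‑back facet normal $G^{k\top}a$ rotates toward a limiting direction without ever reaching it.

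Concretely I would take $G=\mathrm{diag}(1/2,1/4)$ (so $\rho(G)=1/2<1$, with rational entries) and $P=\{x\in\R^2:\ x_1+x_2\ge 0,\ x_1\le 1,\ x_2\le 1\}$, a bounded triangle with $0$ in the relative interior of its edge $\{x_1+x_2=0\}$, so $0\in\partial P$ and in particular $0\notin\mathrm{int}\,P$. Since $G^kx=(2^{-k}x_1,\,4^{-k}x_2)$, the constraints $AG^kx\le b$ for $k=0,\dots,r$ reduce to $x_1\le 1$, $x_2\le 1$, and $2^kx_1+x_2\ge 0$ for $k=0,\dots,r$, so
\[
S_r=\{x\in\R^2:\ x_1\le 1,\ x_2\le 1,\ 2^kx_1+x_2\ge 0\ \text{for }k=0,\dots,r\}.
\]
The first real step is the elementary identity $\bigcap_{k\ge 0}\{x:2^kx_1+x_2\ge 0\}=\{x_1\ge 0\}\cap\{x_1+x_2\ge 0\}$ — the inclusion $\supseteq$ follows from $2^kx_1+x_2=(2^k-1)x_1+(x_1+x_2)$, and $\subseteq$ from taking $k=0$ and letting $k\to\infty$ — which gives $\mathcal S=\{0\le x_1\le 1,\ x_2\le 1,\ x_1+x_2\ge 0\}$, a polygon (so, just as in Proposition~\ref{prop:rho>1.convergence.not.finite}, $\mathcal S$ here is itself polyhedral) that is reached only in the limit.

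The second step is to check that convergence is not finite, i.e.\ $S_{r+1}\subsetneq S_r$ for all $r\ge 0$; since $\mathcal S\subseteq S_{r+1}\subseteq S_r$ always holds, this precludes $\mathcal S=S_r$ at any finite stage (cf.\ Lemma~\ref{lemma:termination}). For each $r$ I would produce the witness $y^{(r)}:=(-2^{-(r+1)},\,1/2)\in S_r\setminus S_{r+1}$: a short computation gives $G^ky^{(r)}=(-2^{-(k+r+1)},\,\tfrac{1}{2}4^{-k})$, whose two coordinates sum to $\tfrac{1}{2}4^{-k}-2^{-(k+r+1)}$, which is $\ge 0$ exactly when $k\le r$ (with equality at $k=r$), while $x_1\le 1$ and $x_2\le 1$ hold with room to spare; hence $y^{(r)}\in S_r$ but $G^{r+1}y^{(r)}\notin P$, so $y^{(r)}\notin S_{r+1}$.

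The main obstacle is in the modeling rather than the verification: one has to recognize that the ``origin on the boundary'' barrier only bites in combination with dynamics that contract at different rates along directions transverse to the active face, and then arrange $G$ and $P$ so that $\rho(G)<1$, $P$ is bounded, and the inactive facets of $P$ never become binding along the relevant trajectories. Identifying this mechanism — and seeing that it rules out the more tempting rotation‑based candidates — is the crux; the inequalities above are then routine.
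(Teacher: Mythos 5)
Your proof is correct, and it takes essentially the same route as the paper: an explicit two-dimensional counterexample with a contraction having two distinct rates and the binding facet of a bounded $P$ passing through the origin transversally to the eigendirections, followed by an exact computation of $S_r$ showing $S_{r+1}\subsetneq S_r$ for all $r$. The paper's instance ($P=[0,1]^2$ with a rotated symmetric $G$ of eigenvalues $1/2$ and $1/6$) is just your construction in different coordinates, so the only differences are cosmetic (your $\mathcal S$ is full-dimensional, the paper's is a segment).
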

%\begin{proof}
%Consider an instance of  R-LD-LP in $\R^2$ with $P=[0,1]^2$, and 
%$$G=\frac{1}{2}\begin{bmatrix} ~~2/3 &~ -1/3 \\-1/3 &~~~~2/3 \end{bmatrix}.$$
%Note that $\rho(G)=1/2$ and the origin is contained in $P$, but not in the interior of $P$.
%It can  be checked that 
%$$G^k%=\frac{1}{2^{k+1}}\begin{bmatrix} ~~1+{1}/{3^k} &~ -1+{1}/{3^k} \\[.2cm]-1+{1}/{3^k} &~~~~~1+{1}/{3^k} \end{bmatrix}
%=\frac{1}{2^{k+1}}\left(\begin{bmatrix} ~~1 &~ -1 \\[.2cm]-1 &~~~~~1 \end{bmatrix}
%+\frac1{3^k}\begin{bmatrix} {~1~~} &~~1~ \\[.2cm]~1~~ &~~1~ \end{bmatrix}\right)$$
%for any integer $k>0$. Consequently, $G^kx\ge0\Rightarrow |x_1-x_2|\le (1/3^k)(x_1+x_2)$.
%As shown in Figure \ref{fig:originneeded},  we  have $S_{r+1}$ strictly contained in $S_r$ for all $r\ge 0$ and therefore convergence to the the limiting set $\mathcal S=\{x\in\R^2\::\:x_1=x_2,~1\ge x_1,x_2\ge0\}$ is not  finite. \qed
%\end{proof}
	\begin{proof}
		Consider an instance of an R-LD-LP in $\R^2$ with $P=[0,1]^2$ and 
		\begin{equation}\label{eq:G.not.finite.exmaples}
		G=\frac{1}{2}\begin{bmatrix} ~~2/3 &~ -1/3 \\-1/3 &~~~~2/3 \end{bmatrix}.
		\end{equation}
		Note that $\rho(G)=1/2$ and the origin is contained in $P$, but not in the interior of $P$.
		It can  be checked that 
		\begin{equation}\label{eq:G^k.not.finite.exmaples}G^k%=\frac{1}{2^{k+1}}\begin{bmatrix} ~~1+{1}/{3^k} &~ -1+{1}/{3^k} \\[.2cm]-1+{1}/{3^k} &~~~~~1+{1}/{3^k} \end{bmatrix}
		=\frac{1}{2^{k+1}}\left(\begin{bmatrix} ~~1 &~ -1 \\[.2cm]-1 &~~~~~1 \end{bmatrix}
		+\frac1{3^k}\begin{bmatrix} {~1~~} &~~1~ \\[.2cm]~1~~ &~~1~ \end{bmatrix}\right)
		\end{equation}
		for any integer $k\geq 1.$ It follows that 
		\begin{align} \label{eq:impl.lemma.1}
	G^kx\ge0\Longleftrightarrow |x_1-x_2|\le (1/3^k)(x_1+x_2),
		\end{align} 
		and therefore $\{x\in\R^2|~ G^{k+1}x\ge0\} \subset \{x\in\R^2|~ G^kx\ge0\}$ for any  integer $k\ge1$. 
		Similarly,
				\begin{align} \label{eq:impl.lemma.1b}
		1\ge G^kx\Longleftrightarrow 2^{k+1} \ge |x_1-x_2|+ (1/3^k)(x_1+x_2), \forall k\geq 1.
		\end{align} 
Observe that for any $k\ge1,$ if $x\in P=[0,1]^2$, then $2^{k+1} \ge |x_1-x_2|+ (1/3^k)(x_1+x_2)$.
		Combining this observation with \eqref{eq:impl.lemma.1} and \eqref{eq:impl.lemma.1b}, we get that
		\begin{equation}
		{S}_r=\bigcap_{k=0}^{r} \{x\in\R^n|\ G^kx\in P  \} = P\cap \{x\in\R^n|\ |x_1-x_2|\le (1/3^r)(x_1+x_2) \},
		\end{equation}
		and therefore ${S}_r$ strictly contains ${S}_{r+1}$ for all  $r\ge1$.
		This shows that convergence of $S_r$ to $\mathcal{S}=\{x\in\R^2|\ 0\leq x_1\leq 1,x_2=x_1\}$ cannot be finite. Figure \ref{fig:originneeded} demonstrates this asymptotic convergence.
		\end{proof}

\renewcommand{\theaxis}{\draw [<->] (-2.5,0) -- (12.5,0)  node[right]{$x_1$}; \draw [<->] (0,-2.5) -- (0,12.5)  node[left]{$x_2$};
 \coordinate  [label=left:1](foo) at (0,10);  \coordinate  [label=below:1](foo) at (10,-.3); 
 \draw (10,-.3)--(10,.3); \draw (-.3,10)--(.3,10);}

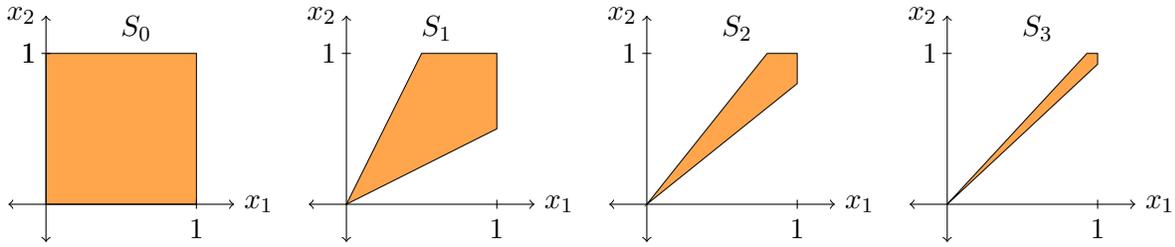
\begin{figure}[h]
\begin{multicols}{4}
{  \begin{tikzpicture}[scale=.2] %P0
 \draw [line width=.01cm, fill=orange!70] (0,0) -- (0,10) -- (10,10) -- (10,0) -- cycle ;
   \coordinate  [label=above:$S_0$](foo) at (6,10.2); \theaxis    \coordinate  [label=left:1](foo) at (0,10); 
 \end{tikzpicture} }
 
%\hspace*{-2cm}
\columnbreak
{  \begin{tikzpicture}[scale=.2] %P1 2x1 >= x2
 \draw [line width=.01cm, fill=orange!70] (0,0) -- (5,10) -- (10,10) -- (10,5) -- cycle ;
   \coordinate  [label=above:$S_1$](foo) at (6,10.2); \theaxis
 \end{tikzpicture} }
 
%\hspace*{-2cm}
\columnbreak 
{  \begin{tikzpicture}[scale=.2] %P2 5x1 >= 4x2
 \draw [line width=.01cm, fill=orange!70] (0,0) -- (40/5,10) -- (10,10) -- (10,40/5) -- cycle ;
   \coordinate  [label=above:$S_2$](foo) at (6,10.2); \theaxis
 \end{tikzpicture} }
 
 %\hspace*{-2cm}
\columnbreak 
{  \begin{tikzpicture}[scale=.2] %P3 14x1 >= 13x2
 \draw [line width=.01cm, fill=orange!70] (0,0) -- (130/14,10) -- (10,10) -- (10,130/14) -- cycle ;
  \coordinate  [label=above:$S_3$](foo) at (6,10.2); \theaxis
 \end{tikzpicture} }
 
  \end{multicols}
  \caption{The construction in the proof of Proposition~\ref{prop:convergence.not.finite.without.origin}.} \label{fig:originneeded}
  \end{figure}

\begin{proposition}\label{prop:P.unbounded.not.finite}
	If $P$ is unbounded, then convergence of $S_r$ to $\mathcal{S}$ may not be finite even if $\rho(G)<1$ and the origin is in the interior of $P$.
\end{proposition}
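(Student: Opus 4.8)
The plan is to exhibit a single R-LD-LP instance that satisfies all three hypotheses---$\rho(G)<1$, the origin in the interior of $P$, and $P$ unbounded---yet has $S_{r+1}\subsetneq S_r$ for every $r\ge 0$ (which, as noted just before Proposition~\ref{prop:rho>1.convergence.not.finite}, is exactly the statement that convergence of $S_r$ to $\mathcal S$ is not finite). Since $P$ must be unbounded but contain the origin in its interior, the natural choice is a half-space, and since $G$ must contract ($\rho(G)<1$) while still pushing points out of $P$ in ever-changing directions, the natural choice is a scaled Jordan block. Concretely, I would take $n=2$,
\[
P=\{x\in\R^2:\ x_1\le 1\},\qquad G=\begin{pmatrix}1/2 & 1\\ 0 & 1/2\end{pmatrix},
\]
and $c$ arbitrary. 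Then $\rho(G)=1/2<1$, the origin lies in the interior of $P$, and $P$ is unbounded, so all the hypotheses hold.

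The first step is to obtain a clean description of $S_r$. Writing $G=\tfrac12 I+N$ with $N=\begin{pmatrix}0&1\\0&0\end{pmatrix}$ nilpotent, one gets $G^k=\begin{pmatrix}(1/2)^k & k(1/2)^{k-1}\\ 0 & (1/2)^k\end{pmatrix}$, so with $A=(1,\,0)$ and $b=1$ the $k$-th constraint $AG^kx\le b$ reads $(1/2)^kx_1+k(1/2)^{k-1}x_2\le 1$, which on multiplying through by $2^k$ collapses to the very clean inequality
\[
x_1+2k\,x_2\le 2^k
\]
(valid for $k=0$ as well). Hence $S_r=\{x\in\R^2:\ x_1+2kx_2\le 2^k,\ k=0,1,\ldots,r\}$, a decreasing family of polyhedra.

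The second step is to show that every inclusion $S_{r+1}\subseteq S_r$ is strict. For a fixed $r\ge 0$ and any $\delta>0$, I would test the point
\[
x^{(r)}:=\bigl((1-r)2^r-2r\delta,\ \ 2^{r-1}+\delta\bigr).
\]
A one-line substitution gives $x^{(r)}_1+2k\,x^{(r)}_2=(1-r+k)2^r+2(k-r)\delta$, which equals $2^r$ (so the $k$-th constraint holds with equality) when $k=r$, is strictly negative---hence $<2^k$---for $0\le k<r$, and equals $2^{r+1}+2\delta>2^{r+1}$ when $k=r+1$. Thus $x^{(r)}\in S_r\setminus S_{r+1}$, so $S_{r+1}\subsetneq S_r$ for all $r\ge 0$, i.e.\ convergence of $S_r$ to $\mathcal S$ is not finite. (As a byproduct one also sees that $\mathcal S=\{x\in\R^2:\ x_1+2kx_2\le 2^k\ \forall k\ge 0\}$ is not polyhedral: on $\{x_2>0\}$ its upper boundary is $x_1=\min_{k\ge0}(2^k-2kx_2)$, a pointwise minimum of affine functions whose minimizing index grows without bound as $x_2\to\infty$.)

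\textbf{Where the difficulty lies.} I do not expect a genuine obstacle here. The only ``design'' decision is to choose $G$ and $P$ so that the infinite family of constraints collapses to the one-parameter family $x_1+2kx_2\le 2^k$; the rescaling by $2^k$ is precisely what makes both the description of $S_r$ and the verification that $x^{(r)}\in S_r\setminus S_{r+1}$ essentially free of computation. The eigenvalue computation, the location of the origin, and the unboundedness of $P$ are all immediate.
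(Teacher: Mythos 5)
Your proposal is correct: the instance $P=\{x\in\R^2:\ x_1\le 1\}$, $G=\tfrac12 I+N$ with $N$ the nilpotent shift satisfies all three hypotheses, the computation of $G^k$ and the rescaled constraints $x_1+2kx_2\le 2^k$ is right, and your witness points $x^{(r)}$ do lie in $S_r$ but violate the $(r+1)$-st constraint, so $S_{r+1}$ is a proper subset of $S_r$ for every $r$, which is exactly the non-finite-convergence statement. This is essentially the same approach as the paper's proof of Proposition~\ref{prop:P.unbounded.not.finite}: an explicit two-dimensional counterexample with an unbounded half-space containing the origin in its interior, a contraction with $\rho(G)=1/2$, and explicit points in $S_r\setminus S_{r+1}$; the only difference is the choice of $G$ (the paper uses the symmetric matrix in (\ref{eq:G.not.finite.exmaples}) with eigenvalues $1/2$ and $1/6$, so its half-spaces $P^k$ have slopes and intercepts requiring the ratio comparisons in (\ref{eq:proof.algebra})), whereas your Jordan-block choice collapses the constraints to a one-parameter affine family and makes the verification a one-line substitution---a genuinely cleaner calculation, though the underlying mechanism (constraint normals that keep rotating while the right-hand sides grow along the unbounded direction) is the same.
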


\begin{proof}
	Consider an instance of R-LD-LP in $\mathbb{R}^2$ with $P=\{(x_1,x_2) \in \mathbb{R}^2|~ x_1\geq -1\}$ and $G$ as in (\ref{eq:G.not.finite.exmaples}). Note that $P$ is a half-space that contains the origin in its interior and $\rho(G)=1/2.$ Recall our notation $P^k=\{x \in \mathbb{R}^n|~G^kx \in P\}$ and $S_r=\cap_{k=0}^r P^k$. Our goal is to provide a sequence of points $\{z_r\}_{r\geq 1}$ such that $z_r \in P^k, \forall k=0,\ldots,r,$ but $z_r \notin P^{r+1}$. This would imply that $S_{r+1}$ is a strict subset of $S_{r}$ for all $r\geq 1$, which shows that convergence of $S_r$ to $\mathcal{S}$ cannot be finite.
	
We first start by characterizing the sets $P^k$ for $k\geq 1.$ In view of (\ref{eq:G^k.not.finite.exmaples}), we have
	$$G^k\begin{pmatrix} x_1 \\ x_2 \end{pmatrix}=\frac{1}{2^{k+1}} \begin{pmatrix} x_1(1+\frac{1}{3^k})+x_2(\frac{1}{3^k}-1)\\ x_1(-1+\frac{1}{3^k})+x_2(\frac{1}{3^k}+1)\end{pmatrix}.$$
	Hence, it is easy to check that for $k\geq 1$, $P^k$ is the following half-space:
	\begin{align*}
	P^k=\{(x_1,x_2)\in \mathbb{R}^2|~x_2\leq l_k x_1+i_k\}, \text{ where } l_k\mathrel{\mathop{:}}=\frac{3^k+1}{3^k-1} \text{ and } i_k\mathrel{\mathop{:}}=\frac{2\cdot 6^k}{3^k-1}.
	\end{align*}
We now define, for $r\geq 1$, the coordinates of the points $z_r=(z_{r,1},z_{r,2})^T$ to be
	\begin{align*}
	z_{r,1}=\frac{i_{r+2}-i_r}{l_{r}-l_{r+2}} \text{ and } z_{r,2}=i_{r+2}+l_{r+2} \cdot \left( \frac{i_{r+2}-i_r}{l_{r}-l_{r+2}} \right).
	\end{align*}
	 Fix $r \geq 1$. Note that $z_{r,1}=2^{r-3}(3^{r+3}-35) \geq \frac14 (3^4-35) \geq -1,$ which implies that $z_r \in P.$ To show that $z_r \in P^k, \forall k=1,\ldots,r$, we need to show that $z_{r,2} \leq l_kz_{r,1} +i_k,$ for $k=1,\ldots,r.$ This is the same as showing that $$\frac{i_{r+2}-i_r}{l_r-l_{r+2}} \geq \frac{i_{r+2}-i_k}{l_k-l_{r+2}}, \mbox{ for } k =1,\ldots,r,$$ which is equivalent to showing that the difference of the two ratios, i.e.,
	\begin{align}\label{eq:proof.algebra}
	\frac{(3^{r+2}-1)(2^{k+3}3^k+2^r3^{r+3}-2^{r+3}3^k-2^r3^{k+3})}{2^3(3^{r+2}-3^k)}
 \end{align}
	is nonnegative for $k=1,\ldots,r$. As we have $$2^{k+3}3^k+2^r3^{r+3}-2^{r+3}3^k-2^r3^{k+3}=2^r3^k(2^{3-(r-k)}+3^{3+(r-k)}-2^3-3^3)$$
	and as $x \geq 0 \Rightarrow 2^{3-x}+3^{3+x}-2^3-3^3 \geq 0$, we have $z_r \in P^{k}, \forall k=1,\ldots,r.$
	
	To show that $z_{r} \notin P^{r+1}$, we simply need to show that 
	$$\frac{i_{r+2}-i_r}{l_r-l_{r+2}} < \frac{i_{r+2}-i_{r+1}}{l_{r+1}-l_{r+2}}.$$
	Replacing $k$ by $r+1$ in (\ref{eq:proof.algebra}), this is equivalent to showing that
	$$2^{r+4}3^{r+1}+2^r3^{r+3}-2^{r+3}3^{r+1}-2^r3^{r+4} < 0.$$ 
	As $$2^{r+4}3^{r+1}+2^r3^{r+3}-2^{r+3}3^{r+1}-2^r3^{r+4}=2^r3^{r+1}(2^4+3^2-2^3-3^3)=-10\cdot2^r3^{r+1},$$ this inequality clearly holds.
\end{proof}

\renewcommand{\theaxis}{
	\draw [<->, color=black] (-5,5) -- (50,5)  node[right]{$x_1$}; \draw [<->] (7,-2) -- (7,59)  node[right]{$x_2$};
}

\begin{figure}[h]
	\begin{multicols}{4}
		{  \begin{tikzpicture}[scale=.04] %P1
			\shade[left color=orange!70,right color=orange!30] (0,-3)--(0,62)--(62,62)--(62,-3)--cycle; 
			\draw (0,-4)--(0,62) ;\coordinate[label=left:$S_0$](foo) at (26,45);	\theaxis   \end{tikzpicture} }
		
		{  \begin{tikzpicture}[scale=.04] %P1
			\shade[left color=orange!70,right color=orange!30] (0,-3)--(0,6)--(33,62) --(62,62)--(62,-3)--cycle; 
			\draw (0,-4)--(0,62);\draw (0,6)--(33,62); \coordinate[label=right:$S_1$](foo) at (33,52);	\theaxis   \end{tikzpicture} }
		
		{  \begin{tikzpicture}[scale=.04] %P2
			\shade[left color=orange!70,right color=orange!30] (0,-3)--(0,6)--(13,28)--(41,62) --(62,62)--(62,-3)--cycle; 
			\draw (0,-4)--(0,62) ;\draw(0,6)--(33,62) ;\draw (0,13)--(41,62); \coordinate[label=right:$S_2$](foo) at (41,52);
			\theaxis    \end{tikzpicture} }
		
		{  \begin{tikzpicture}[scale=.04] %P3
			\shade[left color=orange!70,right color=orange!30] (0,-3)--(0,6)-- (13,28)--(22,40)--(48,67) --(53,67)--(62,67)--(62,-3)--cycle; 
			\draw (0,-4)--(0,62) ;\draw(0,6)--(33,62) ;\draw(0,13)--(41,62) ;\draw(0,18)--(48,67); \coordinate[label=right:$S_3$](foo) at (48,52);
			\theaxis 	\end{tikzpicture} }
		
		%		{  \begin{tikzpicture}[scale=.04] %P4
		%			\shade[left color=yellow!60!white,right color=yellow!10!white] (0,6)--(13,28)--(22,40)--(36,53)--(45,57)--(63,63) --(63,0)--(0,0)--cycle; 
		%			\draw(0,6)--(33,62) ;\draw(0,13)--(41,62) ;\draw(0,23)--(48,62) ;\draw(0,40)--(63,63) ;\coordinate[label=right:$P^4$](foo) at (61,62);
		%			\theaxis   \end{tikzpicture} }
	\end{multicols}
	\caption{The construction in the proof of Proposition~\ref{prop:P.unbounded.not.finite}.} \label{fig:unbounded.bad}
\end{figure}
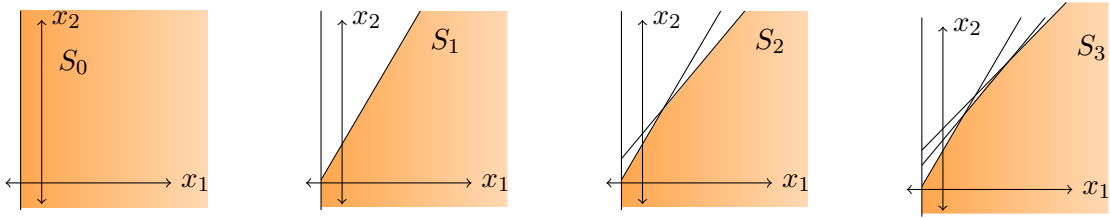

\subsubsection{A polynomially-computable upper bound on the number of steps to convergence}\label{subsubsec:finite.LDLP.}
%==========================================================
\aaa{In this subsection, we show that when (i) $\rho(G)<1$, (ii) $P$ is bounded, and (iii) the origin is in the interior of $P$, then $\mathcal{S}=S_r$ for some integer $r$ that can be computed in time polynomial in the size of the R-LD-LP data. Note that in view of Propositions~\ref{prop:rho>1.convergence.not.finite},~\ref{prop:convergence.not.finite.without.origin},~\ref{prop:P.unbounded.not.finite}, when one of the assumptions (i), (ii), (iii) above does not hold, one can find examples where convergence of $S_r$ to $S$ is not even finite.}

\st{Propositions~2.4, 2.5,2.6 show that there are three necessary conditions for guaranteed finite convergence of the sets $S_r$ in (eq:Sr.LDLP) to the set $\mathcal{S}$ in (eq:feasible.set.R-LD-LP): having (i) $\rho(G)<1$, (ii) $P$ bounded, and (iii) the origin in the interior of $P$. In this subsection, we show that these three conditions are also sufficient for finite convergence, and that an upper bound on the number of steps to convergence can be computed in time polynomial in the size of the data. }

%[*Oktay, should we remove this entire paragraph? {\cred Would you consider moving to concluding remarks where we can say that we studied $\rho(G)<1$ because....}*]

Arguably, the condition $\rho(G)<1$ accounts for one of the more interesting settings of an R-LD-LP. Indeed, if $\rho(G)>1,$ then trajectories of the dynamical system $x_{k+1}=Gx_k$ starting from all but a measure zero set of initial conditions go to infinity and hence, at least when the polyhedron $P$ is bounded, the feasible set $\mathcal S$ of our R-LD-LP can never be full dimensional. The boundary case $\rho(G)=1$ is more delicate. Here, the system trajectories can stay bounded or go to infinity depending on the geometric/algebraic multiplicity of the eigenvalues of $G$ with absolute value equal to one. Even in the bounded case, we have shown already in Theorem~\ref{thm:S.closed.convex.invariant.non.polyhedral.nphard}, part (ii) that the feasible set of an R-LD-LP may not be polyhedral. Hence the optimal value of an R-LD-LP may not even be a rational number (consider, e.g., the set $\mathcal{S}$ associated with Figure~\ref{fig:not.polyhedral} with $c=(1,1)^T$). Note also that when $\rho(G)<1$, we have $\lim_{k\rightarrow\infty} G^kx_0=0$ for all $x_0\in\mathbb{R}^n$ and hence if the origin is not in $P$, then the feasible set of the R-LD-LP is empty. As a consequence, the assumption that the origin be in $P$ is reasonable and that it be in the interior of $P$ is only slightly stronger (and cannot be avoided).

%Arguably, this is the most interesting setting. Note that if $\rho(G)>1,$ then almost all trajectories shoot out to infinity and hence under our assumption that the polyhedron  $P=\{x\in\R^n\::\:Ax\leq b\}$ is bounded, $\mathcal S$, the feasible set of R-LD-LP, will not be full dimensional. The boundary case $\rho(G)=1$ is more tricky. Here trajectories can stay bounded or go to infinity depending on the geometric/algebraic multiplicity of the eigenvalues with absolute value one. Even in the bounded case, we have shown already in Theorem~\ref{thm:not.polyhedral} that the feasible set of R-LD-LP may not be polyhedral. Hence the optimal value of R-LD-LP may not even be a rational number (consider, e.g., Figure~\ref{fig:not.polyhedral} with $c=(1,1)^T$). 

%[*Let's check to make sure we only review facts that are used in the proof.*] 
For the convenience of the reader, we next give the  standard definitions (see, e.g., \cite{Schrijver_LP_IP_Book}) on sizes of rational data  that we will use in the following result.
We say that the size of a rational number $r = p/q$ where $p,q\in \mathbb{Z}$ (and are relatively prime), is $1 + \lceil \log_2(|p| + 1)\rceil + \lceil \log_2(|q| + 1)\rceil$. 
We denote the size of $r$ by $\sigma(r)$ and note that $1/|r|,|r|\leq 2^{\sigma(r)}$.
Similarly, the size of a rational vector (or matrix) is defined to be the sum of the sizes of its components plus the product of its dimensions. 
It is well known that multiplying two matrices gives a matrix of size polynomially bounded by the sizes of the initial matrices.  
The inverse of a nonsingular rational matrix has size polynomially bounded by the size of the matrix. 
Similarly, any system of rational linear equations has a solution of size polynomially bounded by the size of the data defining the equality system \cite{Schrijver_LP_IP_Book}.
In addition, this solution can be computed in polynomial time.
Consequently, if a linear program defined by rational data has an optimal solution, then it has one with size polynomial in the data defining the LP.
Clearly the optimal value of the LP has size polynomial in the data as well. Finally, we remark that given a polyhedron $P=\{x\in\mathbb{R}^n|~ Ax\leq b  \}$, one can check whether $P$ contains the origin in its interior and whether $P$ is bounded in time polynomial in $\sigma(A,b)$. The former task simply requires checking if the entries of $b$ are all positive, and the latter can be carried out e.g. by minimizing and maximizing each coordinate $x_i$ over $P$ and checking if the optimal values of the resulting LPs are all finite. One can also check if the spectral radius of a square matrix $G$ is less than one in time polynomial in $\sigma(G)$~\cite[Section 2.6]{blondel2000survey}.

% is bounded, the origin is in the interior of $P$, and $G$ has spectral radius less than one, all in time polynomial in $\sigma(A,b,G)$; see~\cite[Section 2.6]{blondel2000survey} for the latter claim.

% and a square matrix $G$

%boundedness of $P$, the origin being in the interior of $P$, and the spectral radius of $G$ being less than one all in time polynomial in $\sigma(A,b,G)$; see~\cite[Section 2.6]{blondel2000survey}
%
% $\rho(G)<1$ in time polynomial in $\sigma(G)$~\cite[Section 2.6]{blondel2000survey}.

%We next show that when $\rho(G)<1$ and the origin is contained in the interior of $P$, then $\mathcal S $ is a polyhedron. 
%We will then show that when $\rho(G)$ is not too close to 1, the R-LD-LP can be solved efficiently. 
%Let $\sigma(A,b,G)$ denote the size of $A$, $b$ and $G$.

%==================================================\newpage
{\cred 
\begin{theorem}\label{thm.polyhedrality}
Let $\sigma(A,b,G)$ denote the size of $A$, $b,$ and $G$. Let $\mathcal{S}$ and $S_r$ be as in (\ref{eq:feasible.set.R-LD-LP}) and (\ref{eq:Sr.LDLP}) respectively. If $\rho(G)<1,$ \aaa{$P=\{x\in\mathbb{R}^n|~ Ax\leq b  \}$} is bounded, and the origin is in the interior of $P$, then $\mathcal S=S_r$ for some nonnegative integer $r$ that can be computed in time polynomial in $\sigma(A,b,G)$. Furthermore, for any fixed rational number ${\rho^*}<1$, R-LD-LPs with $\rho(G)\leq {\rho^*}$ can be solved in time polynomial in $\sigma(A,b,c,G,\rho^*)$.
\footnote{Our preliminary version of this work~\cite{rdo_cdc15} unfortunately did not have the assumption that $\rho^*$ be fixed, which is needed in our proof. The statement in the proof of Theorem 3.1 of~\cite{rdo_cdc15} that the integer $r$ computed there has polynomial size is correct, but that linear optimization over $S_r$ is an LP of polynomial size is incorrect. We ask that the reader refer to the statement and proof of Theorem~\ref{thm.polyhedrality} of the current paper instead.}

%Furthermore, for any fixed $\hat\rho\in(\rho(G),1)$, the number $r$ is polynomially bounded by $\sigma(A,b,G)$.
%\\ *** DO WE REALLY NEED $\hat\rho$ strictly larger than $\rho$ ? ***
\end{theorem}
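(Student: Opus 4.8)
The plan is to produce, under the three hypotheses, an explicit integer $r^*$ with $S_{r^*}=\mathcal S$, and then to control its bit-size (for the first claim) or its magnitude (for the second). Since $\mathcal S\subseteq S_{r^*}$ always holds, it suffices to establish $S_{r^*}\subseteq\mathcal S$, i.e. that every $x$ with $x,Gx,\dots,G^{r^*}x\in P$ satisfies $G^kx\in P$ for all $k$. I will drive this with a Lyapunov-type certificate for the dynamics: a continuous $q:\R^n\to[0,\infty)$ and scalars $0<\gamma<1$ and $0<c_2\le c_1$ with
\[
c_2\|y\|^2\le q(y)\le c_1\|y\|^2\quad\text{and}\quad q(Gy)\le\gamma\,q(y)\qquad\text{for all }y\in\R^n,
\]
where $\|\cdot\|$ is the Euclidean norm. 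Given such $q$, the sublevel set $K:=\{y:q(y)\le c_2\delta^2\}$ is $G$-invariant and, by the left inequality, contained in the ball $B(0,\delta)$; choosing $\delta$ so that $B(0,\delta)\subseteq P$ (possible since $b>0$, as the origin is interior, e.g. $\delta=\min_i b_i/\|a_i\|_1$) gives $K\subseteq P$. Also pick $R$ with $P\subseteq B(0,R)$ (possible since $P$ is bounded, with $\sigma(R)=\mathrm{poly}(\sigma(A,b))$ from the standard vertex bounds). Now if $x\in S_{r^*}$ then $x\in P$, so $q(x)\le c_1R^2$; iterating the decay inequality, $q(G^{r^*}x)\le\gamma^{r^*}q(x)\le\gamma^{r^*}c_1R^2\le c_2\delta^2$ once $r^*\ge\log(c_1R^2/(c_2\delta^2))/\log(1/\gamma)$, so $G^{r^*}x\in K$. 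By invariance of $K$, $G^{r^*+j}x\in K\subseteq P$ for every $j\ge0$, and together with $x,\dots,G^{r^*}x\in P$ this gives $x\in\mathcal S$. Hence $S_{r^*}=\mathcal S$, and both claims reduce to choosing $q$ well.

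For the first claim, take $q(y)=y^TQy$ with $Q$ the unique solution of the discrete Lyapunov equation $Q-G^TQG=I$. Since $\rho(G)<1$, the associated linear system $(I-G^T\!\otimes G^T)\,\mathrm{vec}(Q)=\mathrm{vec}(I)$ is nonsingular, so $Q$ is rational of size polynomial in $\sigma(G)$ and $Q=\sum_{k\ge0}(G^T)^kG^k\succeq I$. Then $q(Gy)=y^T(Q-I)y\le q(y)-\|y\|^2\le(1-1/\lambda_{\max}(Q))\,q(y)\le(1-1/\mathrm{tr}(Q))\,q(y)$, so $(\gamma,c_1,c_2)=(1-1/\mathrm{tr}(Q),\ \mathrm{tr}(Q),\ 1)$ works. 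Because $\mathrm{tr}(Q)$ is a positive rational of polynomial size, $\log(1/\gamma)\ge1/\mathrm{tr}(Q)\ge2^{-\mathrm{poly}(\sigma(G))}$ while $\log(c_1R^2/(c_2\delta^2))=\mathrm{poly}(\sigma(A,b,G))$; over-estimating the logarithms in the obvious way yields an integer $r$ with $\sigma(r)=\mathrm{poly}(\sigma(A,b,G))$ and $S_r=\mathcal S$, computable in polynomial time (compute $Q$, $\mathrm{tr}(Q)$, $R$, $\delta$, then evaluate). Note $r$ itself may be exponential, so this alone does not yield a polynomial-time algorithm for the R-LD-LP.

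For the second claim, with $\rho(G)\le\rho^*$ for a \emph{fixed} rational $\rho^*<1$, I need $\gamma$ to be a constant bounded away from $1$. Put $\gamma^*:=(1+\rho^*)/2$, so $\rho(G)<\gamma^*<1$ with gap $\gamma^*-\rho(G)\ge(1-\rho^*)/2$ fixed, and take $q(y)=\|y\|_N^2$ where $\|y\|_N:=\sup_{k\ge0}\|G^ky\|/\gamma^{*k}$. From the Cauchy integral $G^k=\frac1{2\pi i}\oint_{|z|=\gamma^*}z^k(zI-G)^{-1}dz$ and the bound $|\det(zI-G)|\ge(\gamma^*-\rho(G))^n\ge((1-\rho^*)/2)^n$ on that circle, one obtains $\|G^k\|\le C_1\gamma^{*k}$ with $\log C_1=\mathrm{poly}(\sigma(G),\sigma(\rho^*))$; this is the single point where fixedness of $\rho^*$ matters, keeping the unavoidable factor $((1-\rho^*)/2)^{-n}$ only singly exponential in $n$. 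Hence $\|\cdot\|_N$ is a norm with $\|y\|\le\|y\|_N\le C_1\|y\|$ and $\|Gy\|_N\le\gamma^*\|y\|_N$, so $q$ satisfies the certificate conditions with $(\gamma,c_1,c_2)=(\gamma^{*2},C_1^2,1)$. Now $\log(1/\gamma)=2\log(1/\gamma^*)$ is a positive constant, so $r^*=\lceil\log(C_1^2R^2/\delta^2)/\log(1/\gamma^{*2})\rceil=\mathrm{poly}(\sigma(A,b,G,\rho^*))$. Then $S_{r^*}=\mathcal S$ is a polytope cut out by $(r^*+1)m$ inequalities $AG^kx\le b$, each of polynomial size since $\sigma(G^k)=\mathrm{poly}$ for $k\le r^*$; thus the R-LD-LP is the linear program $\min\{c^Tx:x\in S_{r^*}\}$ of polynomial size, whose optimum is attained and rational of polynomial size (because $\mathcal S$ contains the origin and is bounded), hence solvable in polynomial time.

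I expect the quantitative bookkeeping separating the two claims to be the crux. For the first claim the key realization is that the Lyapunov matrix $Q$, solving a rational linear system, has polynomial \emph{size}; but this controls only $\sigma(r)$, not $r$, since $\mathrm{tr}(Q)$ can be exponentially large when $\rho(G)$ is extremely close to $1$ or $G$ exhibits large transient growth. Consequently the Lyapunov route is inadequate for the second claim, and the real work there is the resolvent estimate $\|G^k\|\le C_1\gamma^{*k}$ with $\log C_1$ polynomial, which genuinely needs $\rho^*$ fixed so that the spectral gap is a constant and the $2^{\Theta(n)}$ blow-up in $C_1$ stays under control; this is precisely the gap flagged in the footnote as missing from the conference version.
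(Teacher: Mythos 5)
Your proposal is correct. For the first claim you follow essentially the paper's own route: the Lyapunov solution of $Q-G^TQG=I$, an invariant sublevel set sandwiched between an inscribed ball and a circumscribed ball of $P$, a shrinkage factor $1-1/\lambda_{\max}$ bounded via $\mathrm{tr}(Q)$ (the paper uses Gershgorin instead, and ellipsoidal rather than Euclidean inner/outer sets---cosmetic differences), and the same over-estimation of the logarithms to get an $r$ of polynomial bit size computable by rational arithmetic. For the second claim you genuinely diverge: the paper re-solves the Lyapunov equation for the rescaled matrix $\hat G=G/\hat\rho$ with $\hat\rho=(1+\rho^*)/2$, obtaining a quadratic invariant ellipsoid whose shrinkage factor $\hat\rho^2$ is a constant and whose level ratio has polynomial size, so the same counting argument gives an $r$ polynomially bounded in magnitude; you instead use the extremal-type norm $\|y\|_N=\sup_{k\ge0}\|G^ky\|/\gamma^{*k}$ and certify its equivalence constant via the resolvent bound $\|G^k\|\le C_1\gamma^{*k}$ with $\log C_1$ polynomial (which needs the routine Hadamard/adjugate estimate to be made explicit, but avoids a second Lyapunov solve and stays within rational arithmetic for the quantities the algorithm actually computes). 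Both routes yield a constant decay rate, hence $r=\mathrm{poly}$, after which $S_r$ is an explicitly writable polynomial-size LP. One commentary-level correction: the place where fixedness of $\rho^*$ is truly indispensable is not the constant $C_1$---its logarithm stays polynomial in $\sigma(\rho^*)$ even if $\rho^*$ is part of the input, since $1-\rho^*\ge 2^{-\sigma(\rho^*)}$ for rational $\rho^*$---but the denominator $\log(1/\gamma^{*2})$, which is of order $1-\rho^*$ and could be exponentially small in $\sigma(\rho^*)$; your proof does invoke fixedness exactly there ("is a positive constant"), so the argument stands, but your closing remark mislocates the crux (the same caveat applies to the paper's $\hat\gamma=\hat\rho^2$).
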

}
\begin{proof}
\aaa{For the first claim, we prove that $r$ can be computed in polynomial time by paying attention to the bit-size complexity of the numbers involved in each of the following four steps:}
\begin{itemize}
\item \aaa{ First, we find a positive definite matrix $M\succ 0$ %that satisfies the matrix inequality
\footnote{We use the standard notation $A\succ 0$ to denote that a matrix $A$ is positive definite (i.e., has positive  eigenvalues), and  $A\succeq 0$ to denote that $A$ is positive semidefinite (i.e., has nonnegative eigenvalues).} 
%\footnote{We use the standard notation $A\succeq 0$ to denote that a matrix $A$ is positive semidefinite (i.e., has nonnegative eigenvalues), and  $A\succ 0$ to denote that $A$ is positive definite (i.e., has positive eigenvalues).} 
%$G^TMG\preceq  M.$ This gives us 
and ellipsoids $E(\alpha)= \{x\in\R^n|~x^TMx\leq \alpha  \}$ that are invariant under the dynamics $x_{k+1}=Gx_k$.}
	\item Next, we find scalars $\alpha_1,\alpha_2>0$ \aaa{such that $E(\alpha_1)\subseteq P\subseteq E(\alpha_2)$.}
	%$$\{x\in\R^n|~x^TMx\leq \alpha_1  \} \subseteq P \subseteq \{x\in\R^n|~x^TMx\leq \alpha_2  \}.$$
	\item Then we compute a ``shrinkage factor'' $\gamma\in(0,1)$, which gives a lower bound on the amount our ellipsoids $E(\alpha)$ shrink under one application of $G$. %This would be a scalar satisfying $G^TPG\preceq \gamma P$.
	\item Finally, using $\alpha_1,\alpha_2,\gamma$, we compute a nonnegative integer $r$ such that $G^r E(\alpha_2)\subseteq E(\alpha_1).$ This will imply that $\mathcal S=S_r$.
	\end{itemize}

 \aaa{The second claim of the theorem then follows by adapting Steps 1 and 3 exploiting the fact that $\rho^*<1$ is fixed.}

%\noindent The proof of the second part of the theorem is similar, except we compute a different matrix $M$ and shrinkage factor $\gamma$ using $\rho^*$. 

\noindent{\bf Step 1. Computing an invariant ellipsoid.} 
To find an invariant ellipsoid for the dynamical system $x_{k+1}=Gx_k$, we solve the well-known Lyapunov equation
%The goal of our step 1 is to construct a positive definite matrix $M$ which makes the ellipsoid $E=\{x\in\R^n|~x^TMx\leq 1\}$ invariant under the dynamics $x_{k+1}=Gx_k$.
%For this to happen, we need to ensure that the matrix $M$ satisfies the linear matrix inequality $G^TMG\preceq M.$ To this end, we solve the well-known Lyapunov equation
\begin{equation}\label{eq:Lyapunov.linear.system} G^TMG-M=-I, \end{equation}
for the symmetric matrix $M$, where $I$ here is the $n\times n$ identity matrix. Since $\rho(G)<1$, this linear system is guaranteed to have a unique solution (see e.g.~\cite[Chap. 14]{dahleh2004lectures}). 
Note that $M$ is rational, can be computed in polynomial time, and has size polynomially bounded by $\sigma(G).$
% by solving a system of linear equations.
%In addition, the size of $M$ is poynomially bounded by $\sigma(A,b,G)$.
%%[*Oktay, I feel the proof of positive definiteness of $M$ is a bit too spelled out. Maybe replace with the more compact version I had in the CDC paper? -- *AAA, if this is a known fact, we should just give a reference.*]
%
% We now argue  that $M$ is positive definite and hence the sublevel sets $\{x\in\R^n|~x^TMx\leq \alpha\}$ are bounded for all $\alpha\geq 0$. 
%To see this, suppose the claim is false and there exists  $y\in\R^n, y\neq 0,$  such that $y^TMy\leq 0$.
%Let $\epsilon=y^Ty>0$.
%Multiplying (\ref{eq:Lyapunov.linear.system}) from left and right by $y^T$ and $y$, we see that 
%$$y^TG^TMGy - y^TMy  = -\epsilon ~~\Rightarrow~~y^TG^TMGy ~\le~ -\epsilon.$$
%Moreover, as  $M-G^TMG=I$, we have $M - G^TMG \succeq0$,  and therefore, $(G^{k-1})^TM(G^{k-1}) - (G^{k})^TM(G^{k})\succeq0$ for all $k\ge1$.
%Consequently,  $G^TMG -(G^{k})^TM(G^{k}) \succeq0$  for all $k\ge1$. 
%Therefore,
%$$y^TG^TMGy - y^T(G^k)^TMG^ky\ge 0 ~~\Rightarrow~~  y^T(G^k)^TMG^ky \leq -\epsilon,$$
%for all $k\geq 1$. 
%However, as $\rho(G)<1$, we  have $G^ky\rightarrow 0$, and hence $y^T(G^k)^TMG^ky\rightarrow 0$, a contradiction. 
%
Further, we claim that $M$ must be positive definite. To see this, suppose we had $y^TMy\leq 0$, for some $y\in\mathbb{R}^n, y\neq 0.$ Multiplying (\ref{eq:Lyapunov.linear.system}) from left and right by $y^T$ and $y$, we see that $y^TG^TMGy\leq -y^Ty~<~0$. In fact, $y^T(G^k)^TMG^ky\leq -y^Ty<0$, for all $k\geq 1$. But since $\rho(G)<1,$ we must have $G^ky\rightarrow 0$ as $k\rightarrow \infty$, and hence $y^T(G^k)^TMG^ky\rightarrow 0$ as $k\rightarrow \infty$, a contradiction.

%In short, we can get a rational positive definite matrix $P$ of polynomial size just by solving the linear system (\ref{eq:Lyapunov.linear.system}). This defines the shape of our invariant ellipsoid.

%Therefore we have $M\succ0$ and this defines the shape of the invariant ellipsoid $E=\{x\in\R^n|~x^TMx\leq 1\}$.
%Clearly, if $x^TMx\leq 1$ then $x^TG^TMGx\leq 1$ as $x^TG^TMGx - x^TMx  \le0$ by \eiref{eq:Lyapunov.linear.system}. 
%Therefore, if $x\in E$ then $G^kx\in E$ for all integer $k\ge0$.

Since $M\succ 0$, the sets $$E(\alpha)\mathrel{\mathop:}=\{x\in\R^n|~x^TMx\leq \alpha\}$$ are bounded for all $\alpha\geq 0.$ Furthermore, because $G^TMG\prec M$ in view of (\ref{eq:Lyapunov.linear.system}), if $x\in E(\alpha)$  for some $\alpha\geq 0$, then $Gx\in E(\alpha),$ and hence $G^k x\in E(\alpha)$ for all integers $k\ge0$.

% for all integers $k\ge0$

\noindent{\bf Step 2. Computing inner and outer ellipsoids.} 
%============================
We next compute scalars $\alpha_2>\alpha_1>0$ such that the ellipsoids $E(\alpha_1)=\{x\in\R^n|~x^TMx\leq \alpha_1  \}$ and $E(\alpha_2)=\{x\in\R^n|~x^TMx\leq \alpha_2  \}$ satisfy
$$E(\alpha_1) \subseteq  P \subseteq E(\alpha_2).$$

For $i=1,\ldots,m$, let $a_i^Tx\le b_i$ denote the $i$-th defining inequality of $P$. For each $i$, compute a scalar $\eta_i$ as the optimal value of the following convex program:
 $$\eta_i\mathrel{\mathop:}=\max_{x\in\R^n}\{a_i^T x: x^TMx\leq 1\}.$$ 
The optimal value here can be computed in closed form: $$\eta_i=\sqrt{a_i^TM^{-1}a_i}.$$
Note that $M^{-1}$ exists as $M\succ 0$. We then let 
\beqn \alpha_1=\min_{i\in\{1,\ldots,m\}}\left\{{b_i^2}/{\eta_i^2}\right\}=\min_{i\in\{1,\ldots,m\}}\left\{\frac{b_i^2}{a_i^TM^{-1}a_i}\right\}.\label{eq:alpha1}\eeqn 
This ensures that $\forall i\in\{1,\ldots,m\}$, $\forall x\in E(\alpha_1)$, we have $a_i^Tx\le b_i$. 
Hence, $E(\alpha_1)\subseteq P.$ Note that $\alpha_1>0$ since $M^{-1}\succ 0,$ and $b_i>0$ for $i=1,\ldots,m$ as the origin is in the interior of $P$.
%
%Note that $b_i>0$ for all $i\in\{1,\ldots,m\}$ as the origin is in the interior of $P$ by assumption and $\eta_i$ is bounded and therefore $\alpha_1>0$.
Moreover, the size of  $\alpha_1$ is polynomially bounded by $\sigma(A,b,G)$ as the size of the numbers used in the calculation are all polynomially bounded by  $\sigma(A,b,G)$.

We next compute a scalar $\alpha_2>0$ such that $P\subseteq E(\alpha_2). $
As $P$ is a polytope,  $$P\subseteq \{x\in\R^n|~  l_i\leq x_i \leq u_i \},$$ 
where the scalars $u_i,l_i\in\R,$ for $i=1,\ldots, n$,  can be obtained by solving  LPs that minimize/ maximize each coordinate $x_i$ over $P$.
Note that this can be done in polynomial time.
We can now bound $x^TMx=\sum_{i,j} M_{i,j}x_ix_j$ term by term to  obtain
\beqn\alpha_2=\sum_{i=1}^n \sum_{j=1}^n\max\{|M_{i,j}u_iu_j|, |M_{i,j}l_il_j|, |M_{i,j}u_il_j|, |M_{i,j}l_iu_j|   \}.\label{eq:alpha2}\eeqn 
Clearly for any $x\in P$, we have $x^TMx\le\alpha_2$ and therefore $P\subseteq E(\alpha_2)$.
As optimal values of linear programs are bounded polynomially by the size of their data, we have  the size of  $\alpha_2$ polynomially  bounded by  $\sigma(A,b,G)$.

\noindent{\bf Step 3. Computing a shrinkage factor.} 
%============================
Next we argue that the number 
\beqn\gamma=1-\frac{1}{\max_{i\in\{1,\ldots,n\}}\{M_{ii}+\sum_{j\neq i}|M_{i,j}|\}}\label{eq:shrinkage}\eeqn
satisfies $G^TMG\preceq \gamma M$.
%Clearly, $\gamma\in(0,1)$.
Observe that for any $x\in\R^n,$ the Lyapunov equation (\ref{eq:Lyapunov.linear.system}) implies 
$$x^TG^TMGx ~~=~~x^TMx-x^Tx ~~\leq~~(1-\eta) x^TMx ,$$
provided that $\eta>0$ is a scalar that satisfies the inequality   $$\eta x^TMx\leq x^Tx$$
for all $x\in\R^n.$ Let $\lambda_{max}(M)$ denote the largest eigenvalue of the matrix $M$.
Note that any $\eta\le{1}/{\lambda_{max}(M)}$ satisfies the above inequality.
By Gershgorin's circle theorem, we have
$$\lambda_{max}(M)\leq \max_{i\in\{1,\ldots,n\}}\{M_{ii}+\sum_{j\neq i}|M_{i,j}|\}.$$
Using this upper bound, we establish that  $G^TMG\preceq \gamma M$ for $\gamma$ as given in (\ref{eq:shrinkage}).

We observe that as $M\succ0$, and hence $G^TMG\succeq 0,$ we have $M-I=G^TMG\succeq 0$.
This implies that $M_{ii}\geq 1$ for all $i\in\{1,\ldots,n\}$ and therefore $\gamma$ is indeed a number in $[0,1)$. Note also that the size of $\gamma$ is polynomially bounded by $\sigma(G).$

%Note that $\gamma<1$ and it can be computed in polynomial time.

{\bf Step 4. Computing the number of steps to convergence.}
%============================
Using $\alpha_1,\alpha_2$ and $\gamma$, we now compute an integer $\bar{r}$ such that $\gamma^{\bar{r}}E(\alpha_2)\subseteq E(\alpha_1)$ and therefore all  points inside the outer ellipsoid  $E(\alpha_2)$ are guaranteed to be within the inner ellipsoid $E(\alpha_1)$ after at most $\bar{r}$ steps.

As $G^TMG\preceq \gamma M$, we have $(G^k)^TMG^k\preceq \gamma^k M$.
Hence, if $x\in E(\alpha_2)$, i.e., $x^TMx \leq \alpha_2$, then $x^T(G^k)^TMG^kx\le \gamma^k\alpha_2$.
Clearly  $\gamma^{\bar{r}}\alpha_2\le\alpha_1$ for 
\beq \bar{r}=\lc \frac{\log({\alpha_1}/{\alpha_2})}{\log\gamma} \rc=\lc \frac{\log({\alpha_2}/{\alpha_1})}{\log(1/\gamma)} \rc
~\le~\lc \frac{({ \alpha_2}/{ \alpha_1})-1}{(1- \gamma)} \rc ~=\mathrel{\mathop:}~ r
,\eeq
where the  inequality above uses the fact that  $1-(1/a)\le \log a\le a-1$ for any scalar $a>0$.
Therefore, any point $x\in E(\alpha_2)$ satisfies $G^rx\in E(\alpha_1)$.
As $E(\alpha_1)$ is invariant and $E(\alpha_1)\subseteq P$, we conclude that if $G^tx\in P$ for  $t=1,\ldots,r$, then $G^tx\in P$ for all $t\ge r$.
This establishes that $\mathcal S = {S}_r$ (in fact, we have shown that $\mathcal S = {S}_r= {S}_{\bar r}$). 
Note that the numbers $\alpha_1,\alpha_2,\gamma,$ and hence $r,$ can be computed in time polynomial in $\sigma(A,b,G).$ This completes the first part of the proof.

\textbf{Solving R-LD-LP in polynomial time.}
We next show that the number of steps to convergence is itself polynomially bounded by $\sigma(A,b,G,\rho^*)$ when $\rho(G)$ is upper bounded by a rational constant $\rho^*<1$. This would imply that after a polynomial number of steps $r$, we would have $\mathcal S = {S}_r$ and therefore the inequalities describing $\mathcal S$ could be written down in polynomial time. As any linear function $c^Tx$ can be optimized over a polyhedron in polynomial time, this would prove the second claim of the theorem.

To this end, we compute the invariant ellipsoid $E$ in Step 1 and the shrinkage factor $\gamma$ in Step 3 slightly differently.
To find an invariant ellipsoid for $G$, we now solve the  Lyapunov equation
\beq \hat G^T\hat M\hat G-\hat M=-I, \eeq
where $\hat G=(1/\hat\rho)G$ and $\hat{\rho}=(1+\rho^*)/2$. { As  $\rho(G)<\hat\rho<1$, we have $\rho(\hat G)<1$} and therefore the above equation has a unique solution.
Moreover,  the size of $\hat M$ is polynomially bounded by $\sigma(A,b,\hat G)$ and therefore by $\sigma(A,b,G,\rho^*)$. (Recall also that $\rho^*$ is a constant throughout.) As $\hat G^T\hat M\hat G-\hat M$ is negative definite, 
$$ \hat M \succ \hat G^T\hat M\hat G = \frac1{\hat\rho^2} G^T \hat M G $$ 
which readily gives the shrinkage factor $\hat\gamma =  \hat\rho^2 $.
We can now compute $\hat\alpha_1$ and $\hat\alpha_2$ using Equations \eqref{eq:alpha1} and \eqref{eq:alpha2} with $\hat M$.
Clearly the sizes of both $\hat\alpha_1$ and $\hat\alpha_2$ are   polynomially bounded b  $\sigma(A,b,\hat G)$ and therefore by $\sigma(A,b,G,\rho^*)$.

We observe with the same argument as before that with
\beq r=\lc \frac{\log({\hat\alpha_2}/{\hat\alpha_1})}{\log(1/\hat\gamma)} \rc
,\eeq
we must have $\mathcal S=S_r.$
Note that the size of $\hat\alpha_1/\hat\alpha_2$, i.e., $\log(\hat\alpha_1/\hat\alpha_2)$, is polynomially bounded by $\sigma(A,b,G,\rho^*)$. 
As $\hat{\gamma}$ is a constant, $r$ is indeed polynomially bounded by $\sigma(A,b,G,\rho^*)$.
%Consequently, $\log(\hat\alpha_1/\hat\alpha_2)\le \log(2^s)\le s$, where $s$ is the size of $\hat\alpha_2/\hat\alpha_1$.
\end{proof}

To summarize, under the assumptions of Theorem~\ref{thm.polyhedrality}, we have provided a pseudo-polynomial time algorithm\footnote{We recall that a pseudo-polynomial time algorithm is an algorithm whose running time is polynomial in the numeric value of the input (the largest integer present in the input), but not necessarily in the size of the input (the number of bits required to represent the input).} for R-LD-LP, and a polynomial algorithm for all instances where the spectral radius of $G$ is upper bounded by a constant less than one. We end this subsection by remarking that since a convex quadratic function can be minimized over a polyhedron in polynomial time \cite{KozTarKha80}, the same complexity guarantees carry over to a generalization of R-LD-LP where the linear objective function is replaced by a convex quadratic one.

%\
%{
%\begin{remark}
%Assume that $\rho(G)<1$ and the origin is contained in the interior of $P$.
%Then, given a fixed $\rho^*\in[\rho(G),1)$, any convex quadratic function can be minimized over $\mathcal S$ in time polynomial in $\sigma(A,b,G)$  and the size of the data defining the objective function.
%\end{remark}
%}

\subsection{Inner approximations to $\mathcal{S}$} \label{subsec:inner.approx.LDLP}%\label{subsec:bounds}
%==========================================================

%**say why we even need inner approx if outer approxs give a poly-time algorithm* say Sr never produces feasible solution...may take too long****

In this subsection, we focus on the computation of a sequence of inner approximations to the feasible set $\mathcal{S}$  that produce \emph{feasible solutions} to the R-LD-LP in each step. By minimizing $c^Tx$ over these sets, one obtains \emph{upper bounds} on the optimal value of the R-LD-LP. \aaa{\st{This complements the lower bounding procedure of the previous subsection and produces guaranteed \emph{feasible solutions} to the R-LD-LP in each step of the sequence.}} (Note that points belonging to the outer approximations $S_r$ may not be feasible to the R-LD-LP, unless we wait long enough for $S_r$ to coincide with $\mathcal{S}$.) Motivated by the analysis in Section~\ref{subsec:outer.approx.LDLP}, we are interested in the remainder of this section in the setting where $\rho(G)<1$ and $P$ is a bounded polyhedron that contains the origin in its interior. Some of the statements in our lemmas below however do not need all three assumptions.

%We next discuss how to compute inner and outer approximations of $\mathcal S$ that will, respectively,  give upper and lower bounds on optimization problems over the set $\mathcal S$.
%Remember that the set 
%$${S}_r=\bigcap_{k=0}^{r} \{x\in\R^n|\ AG^kx\leq b  \}$$ 
%readily gives an outer approximation in the sense that $S_r\supseteq \mathcal S$ for all nonnegative integers $r$. 
%Furthermore, this is a nested family in the sense that, ${S}_r\supseteq{S}_{r+1}\supseteq \mathcal S$ for all $r\ge0$.

Recall the notation $S_r$ for the outer-approximating polyhedra defined in (\ref{eq:Sr.LDLP}). To find a family of inner approximations to $\mathcal{S}$, we assume that an invariant set $E\subseteq P$ (with respect to $G$) is given.
We will discuss the efficient computation of the set $E$ later (cf. Section~\ref{subsubsec:IrE.LDLP} and Section~\ref{subsubsec:invariant.ellipsoid.SDP}). 
%Let $I_0(E)\mathrel{\mathop{:}}=E$ and for any  integer $r\geq 1$,  define
Define $S_{-1}\mathrel{\mathop{:}}=\R^n$ and for any  integer $r\geq 0$,  let
\beqn \label{eq:Ir(E)} I_r(E) = {S}_{r-1}\cap\{x\in\R^n|\:G^rx\in E\}.\eeqn
Note that $I_0(E)=E$ by definition.
We next argue that the sets $I_r(E)$ are nested and contained in $\mathcal S$.

%{\cred I defined $S_{-1}$ above and  made the following claim for $r\ge0$. The only change in the proof is the first sentence.
	
%[*Oktay,* Also, do we need $E$ to be convex in Lemma 2,3 or no? I doubt that, but thought I would check.*] ----  i dont think we need convexity. 
%}

\begin{lemma} \label{lemma:inner.approx}
Let   $E\subseteq P$ be invariant with respect to $G$. 
Then, for any integer $r\ge0$, 
(i) $I_r(E) \subseteq\mathcal S$,~
(ii) $I_r(E) \subseteq I_{r+1}(E) $, 
and 
(iii) if $I_r(E)=I_{r+1}(E) $,   then $I_k(E)=I_{r}(E) $ for all $k\geq r$. 
%Furthermore, for any fixed $r$, the condition ${S}_r=  {S}_{r+1}$ can be checked in polynomial time.
\end{lemma}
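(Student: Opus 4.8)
The plan is to prove the three claims in the natural order, leaning on the defining formula $I_r(E) = S_{r-1}\cap\{x\,:\,G^rx\in E\}$ together with the invariance of $E$ and the nested structure of the $S_r$'s established just before the lemma.

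For part (i), I would fix $x\in I_r(E)$ and show $G^kx\in P$ for every $k\ge 0$. For $k\le r-1$, this is immediate from $x\in S_{r-1}$ and the definition of $S_{r-1}$ in \eqref{eq:Sr.LDLP}. For $k\ge r$, write $G^kx = G^{k-r}(G^rx)$; since $G^rx\in E$ and $E$ is invariant under $G$, we get $G^{k-r}(G^rx)\in E\subseteq P$. Hence $x\in\mathcal S$ by \eqref{eq:feasible.set.R-LD-LP}.

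For part (ii), I would take $x\in I_r(E)$ and verify the two conditions defining $I_{r+1}(E)=S_{r}\cap\{x\,:\,G^{r+1}x\in E\}$. First, $x\in S_r$: indeed $x\in S_{r-1}$ gives $G^kx\in P$ for $k\le r-1$, and $G^rx\in E\subseteq P$ handles $k=r$, so $x\in S_r$. Second, $G^{r+1}x = G(G^rx)\in E$ because $G^rx\in E$ and $E$ is invariant. Thus $x\in I_{r+1}(E)$.

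For part (iii), the key observation is a one-step ``shift'' identity: if $x\in I_{r+1}(E)$, then $Gx\in I_r(E)$ — because $x\in S_r$ forces $Gx\in S_{r-1}$ (the trajectory of $Gx$ is the trajectory of $x$ shifted by one), and $G^{r+1}x\in E$ says exactly $G^r(Gx)\in E$. So assuming $I_r(E)=I_{r+1}(E)$, I would show $I_{r+1}(E)=I_{r+2}(E)$ and then induct. Given $x\in I_{r+1}(E)$: from $x\in S_r$ we get $G^kx\in P$ for $k\le r$, so it remains to check $G^{r+1}x\in P$ (to conclude $x\in S_{r+1}$) and $G^{r+2}x\in E$. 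By the shift identity $Gx\in I_r(E)=I_{r+1}(E)$, hence $Gx\in S_r$, giving $G^{r+1}x = G^r(Gx)\in P$, so $x\in S_{r+1}$; and $Gx\in I_{r+1}(E)$ gives $G^{r+1}(Gx)=G^{r+2}x\in E$. Therefore $x\in S_{r+1}\cap\{x\,:\,G^{r+2}x\in E\}=I_{r+2}(E)$, and with part (ii) this yields $I_{r+1}(E)=I_{r+2}(E)$; iterating gives $I_k(E)=I_r(E)$ for all $k\ge r$.

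I do not expect a serious obstacle here; the only point requiring care is making the ``shift'' relationship $x\in S_r\Rightarrow Gx\in S_{r-1}$ (and its analogue for the $E$-constraint) precise, since everything else is bookkeeping with the definitions and the invariance of $E$. One could alternatively phrase part (iii) as a direct consequence of the stabilization of the decreasing (by (ii), increasing) chain combined with this shift map being a retraction, but the explicit induction above is the cleanest route.
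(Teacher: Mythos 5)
Parts (i) and (ii) of your proposal are correct and essentially identical to the paper's argument, and the ``shift'' identity you isolate ($x\in I_{r+1}(E)\Rightarrow Gx\in I_r(E)$) is both true and exactly the right tool. The problem is in part (iii). Since part (ii) already gives the increasing chain $I_r(E)\subseteq I_{r+1}(E)\subseteq I_{r+2}(E)\subseteq\cdots$, the entire content of (iii) is the \emph{reverse} inclusion $I_{r+2}(E)\subseteq I_{r+1}(E)$ (followed by induction). But your argument starts from $x\in I_{r+1}(E)$ and concludes $x\in I_{r+2}(E)$: that is the inclusion $I_{r+1}(E)\subseteq I_{r+2}(E)$, which is just part (ii) applied at index $r+1$ and does not need the hypothesis $I_r(E)=I_{r+1}(E)$ at all. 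Invoking part (ii) afterwards produces the same inclusion a second time, not its converse, so the equality $I_{r+1}(E)=I_{r+2}(E)$ is never established; as written, part (iii) is not proved.

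The fix is short and uses your own shift identity, applied to the correct starting point. Take $x\in I_{r+2}(E)$, so $x\in S_{r+1}$ and $G^{r+2}x\in E$. The shift (at index $r+1$) gives $Gx\in I_{r+1}(E)$, which equals $I_r(E)$ by hypothesis; membership in $I_r(E)$ means in particular $G^r(Gx)=G^{r+1}x\in E$. Since also $x\in S_{r+1}\subseteq S_r$, you conclude $x\in I_{r+1}(E)$, i.e., $I_{r+2}(E)\subseteq I_{r+1}(E)$, and induction finishes the claim. This corrected argument is in substance the paper's proof, which runs the same step as a contradiction: if $x\in I_{r+2}(E)\setminus I_{r+1}(E)$, then $y=Gx$ lies in $I_{r+1}(E)\setminus I_r(E)$, contradicting $I_r(E)=I_{r+1}(E)$.
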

\begin{proof}
First note that as $I_0(E)=E\subseteq P$, and $E$ is invariant, we have $I_0(E) \subseteq\mathcal S$.
When $r\ge1$, if  $x\in  I_r(E) $ then $x\in {S}_{r-1}$ and therefore $ G^tx\in  P$ for $t=0,\ldots,r-1.$ 
In addition, as the set $E\subseteq P$ is invariant,  $G^rx\in E$ implies that $G^{r+t}x\in E\subseteq P$ for $t\ge0.$ 
Consequently,  $ G^tx\in  P$ for all $t\geq 0$ and therefore  $I_r(E) \subseteq\mathcal S$.

%Remember  that ${S}_{r-1}\supseteq {S}_{r}\supseteq \mathcal S \supseteq E$ for all $r>0$. 
To see that $I_r(E) \subseteq I_{r+1}(E) $, note that if $x\in I_r(E)$ then $x\in S_{r-1}$ and $G^rx\in E \subseteq \mathcal S$.
As $G^rx\in \mathcal{S}$ implies $AG^rx\leq b$, we have   $x\in S_{r}$.
Furthermore, as $E$ is invariant, if $G^rx\in E$ then $G^{r+1}x\in E$ as well.
Consequently, $x\in  I_{r+1}(E) $ as desired.

To prove that the last claim also holds, we will argue that if  $I_r(E)=I_{r+1}(E) $,   then  $I_{r+1}(E)=I_{r+2}(E)$. 
As   $I_{r+1}(E)\subseteq I_{r+2}(E) $, we need to show that   $I_{r+1}(E)\supseteq I_{r+2}(E) $.
Assume  $I_{r+1}(E)\not\supseteq I_{r+2}(E)$, and let $x\in I_{r+2}(E) \setminus I_{r+1}(E)$.
In this case, as $x\in I_{r+2}(E) $, we have $x\in S_{r+1}$ and $G^{r+2}x\in E$. 
Furthermore, as $x\not\in I_{r+1}(E) $, and $x\in S_{r}\supseteq S_{r+1}$, we also have  $G^{r+1}x\not\in E$.
Now consider the point $y=Gx$. 
Clearly, $y\in S_{r}$ as  $x\in S_{r+1}$. 
Furthermore,  we have $G^{r+1}y\in E$ as  $G^{r+2}x\in E$ and therefore, $y\in I_{r+1}(E).$
However,  $G^{r}y\not\in E$ as  $G^{r+1}x\not\in E$ and therefore  $y\not\in I_{r}(E).$
This contradicts the assumption that $I_r(E)=I_{r+1}(E) $ as $y\in I_{r+1}(E)\setminus I_{r}(E)$.
\end{proof}

We conclude that for all $r\ge0$ 
%\beqn{S}_r\supseteq{S}_{r+1}\supseteq \mathcal S\supseteq I_{r+1}(E)\supseteq I_{r}(E)\label{eq:fooo}\eeqn
\beqn I_{r}(E) \subseteq I_{r+1}(E) \subseteq \mathcal S \subseteq {S}_{r+1}\subseteq {S}_{r}\label{eq:fooo}\eeqn
provided that  the set $E$ is invariant with respect to $G$ and $E\subseteq P$.
Also note that we did not make any assumptions on $P$ or on $G$ for the inclusion relationships in (\ref{eq:fooo}) to hold.

%do not need $P$ to be bounded for \iiref{eq:fooo} to hold.

%\newcommand{\ee}{\mathscr{E}}
\newcommand{\ee}{L}

\begin{lemma} \label{lemma:inner.approx.limit}
Let $P$ be a bounded polyhedron, $\rho(G)<1,$ and $E\subseteq P$ be invariant with respect to $G$. 
Furthermore, assume that $E$  contains the origin in its interior.
Under these assumptions, if $I_r(E)=I_{r+1}(E)$,   then $\mathcal S=I_{r}(E)$. 
\end{lemma}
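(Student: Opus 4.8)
The plan is to prove the two inclusions $I_r(E)\subseteq\mathcal{S}$ and $\mathcal{S}\subseteq I_r(E)$ separately. The first is already furnished by Lemma~\ref{lemma:inner.approx}(i) (which holds under the present hypotheses, and in fact under weaker ones), so all the work lies in the reverse inclusion $\mathcal{S}\subseteq I_r(E)$.

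First I would record that, because $I_r(E)=I_{r+1}(E)$, Lemma~\ref{lemma:inner.approx}(iii) gives $I_k(E)=I_r(E)$ for every integer $k\ge r$; combined with the monotonicity $I_0(E)\subseteq I_1(E)\subseteq\cdots$ from Lemma~\ref{lemma:inner.approx}(ii), this yields $\bigcup_{k\ge 0}I_k(E)=I_r(E)$. Hence it suffices to show that every point of $\mathcal{S}$ belongs to $I_k(E)$ for \emph{some} $k$. Now fix $x\in\mathcal{S}$. By definition $G^tx\in P$ for all $t\ge 0$, and since $\mathcal{S}\subseteq S_{k-1}$ for every $k\ge 1$ we have $x\in S_{k-1}$ for all such $k$. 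Because $\rho(G)<1$ we have $\|G^k\|\to 0$, so $G^kx\to 0$ as $k\to\infty$; as $E$ contains the origin in its interior, there is an integer $N\ge 1$ with $G^Nx\in E$. Recalling the definition $I_N(E)=S_{N-1}\cap\{x\in\R^n:G^Nx\in E\}$ from \iref{eq:Ir(E)}, we conclude $x\in I_N(E)\subseteq\bigcup_{k\ge0}I_k(E)=I_r(E)$. This establishes $\mathcal{S}\subseteq I_r(E)$, and together with $I_r(E)\subseteq\mathcal{S}$ it gives $\mathcal{S}=I_r(E)$.

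The argument is short and I do not anticipate a genuine obstacle; the only point requiring a little care is conceptual rather than technical, namely to realize that one should invoke the \emph{eventual} entry of a trajectory starting in $\mathcal{S}$ into $E$ (a consequence of $\rho(G)<1$ together with the origin being interior to $E$) and then lean on the stabilization statement of Lemma~\ref{lemma:inner.approx}(iii), instead of trying to argue directly at index $r$. I also note that boundedness of $P$ is not actually used in this particular implication, though it is consistent with the standing assumptions of the subsection.
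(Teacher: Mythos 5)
Your proof is correct, but it takes a genuinely different route from the paper's. The paper proves the inclusion $\mathcal{S}\subseteq I_r(E)$ \emph{uniformly}: it reuses the machinery of Theorem~\ref{thm.polyhedrality} (Lyapunov equation, invariant ellipsoids $E(\alpha_1)\subseteq E\subseteq P\subseteq E(\alpha_2)$, and the shrinkage factor \iref{eq:shrinkage}) to produce a single integer $m$ such that $G^m x\in E$ for \emph{every} $x\in P$, so that $\{x\,|\,G^kx\in E\}\supseteq P\supseteq\mathcal{S}$ for all $k\geq m$, and then passes to the limit in $I_k(E)=S_{k-1}\cap\{x\,|\,G^kx\in E\}$. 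You instead argue pointwise: for each fixed $x\in\mathcal{S}$, the trajectory $G^kx$ tends to the origin (since $\rho(G)<1$), hence eventually enters $E$ (origin interior to $E$), so $x\in I_N(E)$ for some $N$ depending on $x$, and the stabilization statement of Lemma~\ref{lemma:inner.approx}(iii) together with monotonicity collapses this to $x\in I_r(E)$. Your version is more elementary (no ellipsoid construction, no set limits) and, as you correctly note, does not use boundedness of $P$, so it proves a slightly more general statement of this implication. What the paper's heavier argument buys is precisely the uniform and polynomially computable entry time $m_E$ with $G^{m_E}P\subseteq E$, which the authors immediately exploit in the remark preceding Corollary~\ref{corollary:inner.approx.finite} and again in the proof of Theorem~\ref{thm:inner.SDP}, part (ii); your pointwise argument does not yield such a bound, but none is needed for the lemma as stated.
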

\begin{proof}
	Let $\ee_k\mathrel{\mathop:}=\{x\in\R^n\:|\:G^kx\in E\}$ and recall that $I_k(E) = {S}_{k-1}\cap \ee_k$ and therefore $I_k(E) \subseteq {S}_{k-1}$ for all $k>0$.
Also note that  $\lim_{k\rightarrow\infty} {S}_{k} = \mathcal S$ and as  $I_r(E)=I_{r+1}(E)$ by assumption, Lemma~\ref{lemma:inner.approx} implies that $\lim_{k\rightarrow\infty} {I}_{k} (E)=I_{r}(E)$.
Therefore, taking the limit, we obtain:
$$I_r(E) = \mathcal{S}\cap \lim_{k\rightarrow\infty} \ee_k.$$
Consequently, to prove the claim, we  need to argue that  $\lim_{k\rightarrow\infty} \ee_k\supseteq \mathcal S$.
We will actually show that  $\lim_{k\rightarrow\infty} \ee_k\supseteq P$, which is sufficient as $P\supseteq \mathcal S$.

As  $\rho(G)<1,$ following the steps in the proof of Theorem~\ref{thm.polyhedrality}, we can find a positive definite matrix $M$ such that  the ellipsoid $E(\beta)=\{x\in\R^n|~x^TMx\leq \beta\}$ is invariant under the linear dynamics $G$ for all $\beta>0$.
As $E$ is full-dimensional and contains the origin is its interior, there exists a scalar $\alpha_1>0$ such that $E\supseteq E(\alpha_1)$. 
In addition, we can compute a scalar $\alpha_2>0$ such that $E(\alpha_2) \supseteq  P$. 
Therefore, we have
%$$E(\alpha_2) \supseteq  P \supseteq  E \supseteq  E(\alpha_1).$$
$$E(\alpha_1) \subseteq  E\subseteq  P \subseteq  E(\alpha_2).$$
Furthermore, the shrinkage factor given by equation~\iref{eq:shrinkage} implies that
for some nonnegative integer $m,$ all $x\in E(\alpha_2)$ satisfy $G^kx\in E(\alpha_1)$ for all $k\ge m$.
As  $E(\alpha_1) \subseteq  E$ and $P \subseteq  E(\alpha_2),$ this implies that if $x\in P$, then $G^mx\in E$ and therefore $\ee_k\supseteq P$ for all $k\ge m$.
Consequently,  $\lim_{k\rightarrow\infty} \ee_k\supseteq P$, and $\mathcal S=I_{r}(E)$ as desired.
\end{proof}

{\cred
The proof of Lemma~\ref{lemma:inner.approx.limit} shows that for any invariant set $E\subseteq P,$ one can compute a nonnegative integer $m_E$ such that $G^{m_E}x\in E$ for all $x\in P$. This implies that $\ee_k\supseteq \mathcal S$ for all $k\ge m_E$. In addition,  Theorem \ref{thm.polyhedrality} shows that for some nonnegative integer $r$, we have $\mathcal S=S_k,$  $\forall k\ge r$. Consequently, for all $k\ge \max\{m_E,r\}$ we have $\ee_k\supseteq\mathcal S=S_k,$ which we formally state next.

\begin{corollary} \label{corollary:inner.approx.finite}
	Under the assumptions of Lemma \ref{lemma:inner.approx.limit}, there is a nonnegative integer $t$ such that $\mathcal S=I_r (E)$ for all $r\ge t$.
\end{corollary}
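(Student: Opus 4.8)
The plan is to convert the quantitative estimates already obtained inside the proof of Lemma~\ref{lemma:inner.approx.limit} into a single threshold $t$ after which $I_r(E)$ has stabilized to $\mathcal S$, and then to close the argument using the inclusion $I_r(E)\subseteq\mathcal S$ from Lemma~\ref{lemma:inner.approx}.

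First I would recall from the proof of Lemma~\ref{lemma:inner.approx.limit} (which reuses Steps~1--4 of the proof of Theorem~\ref{thm.polyhedrality}) that, because $\rho(G)<1$, $P$ is bounded, and $E$ is full-dimensional with the origin in its interior, one can produce a positive definite matrix $M$ with $G^TMG\preceq\gamma M$ for some $\gamma\in(0,1)$, together with scalars $\alpha_2>\alpha_1>0$ such that
\[
E(\alpha_1)\subseteq E\subseteq P\subseteq E(\alpha_2), \qquad E(\beta)\mathrel{\mathop:}=\{x\in\R^n|~x^TMx\leq\beta\}.
\]
Iterating $(G^k)^TMG^k\preceq\gamma^k M$ shows that, for $m_E\mathrel{\mathop:}=\lceil\log(\alpha_2/\alpha_1)/\log(1/\gamma)\rceil$, every $x\in E(\alpha_2)$---in particular every $x\in P$---satisfies $G^{m_E}x\in E(\alpha_1)\subseteq E$, hence $G^kx\in E$ for all $k\ge m_E$ by invariance of $E$. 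In the notation $\ee_k=\{x\in\R^n|~G^kx\in E\}$ of the proof of Lemma~\ref{lemma:inner.approx.limit}, this reads $\ee_k\supseteq P\supseteq\mathcal S$ for every $k\ge m_E$.

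Next I would invoke Theorem~\ref{thm.polyhedrality} to obtain a nonnegative integer $r_0$ with $\mathcal S=S_k$ for all $k\ge r_0$, and set $t\mathrel{\mathop:}=\max\{m_E,\,r_0+1\}$. Fixing any $k\ge t$ and using the definition $I_k(E)=S_{k-1}\cap\ee_k$, Lemma~\ref{lemma:inner.approx}(i) gives $I_k(E)\subseteq\mathcal S$; conversely $S_{k-1}=\mathcal S$ since $k-1\ge r_0$, and $\ee_k\supseteq\mathcal S$ since $k\ge m_E$, so $\mathcal S=S_{k-1}\cap\mathcal S\subseteq S_{k-1}\cap\ee_k=I_k(E)$. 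Combining the two inclusions yields $\mathcal S=I_k(E)$ for all $k\ge t$. (One could even take $t=m_E$ and dispense with Theorem~\ref{thm.polyhedrality}, since $\mathcal S\subseteq S_{k-1}$ holds for every $k$ just by definition of the outer approximations; following the discussion preceding the corollary is simply the most direct route.)

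I do not expect a genuine obstacle here: the only substantive ingredient is the existence of the finite ``absorption time'' $m_E$ of $P$ into $E$, which is exactly what the hypotheses $\rho(G)<1$, $P$ bounded, and $E$ full-dimensional provide, and which has essentially been extracted already in the proof of Lemma~\ref{lemma:inner.approx.limit}. The one place to stay careful is the index shift in the identity $I_k(E)=S_{k-1}\cap\ee_k$, so that the two thresholds $m_E$ and $r_0$ are merged into the correct value of $t$.
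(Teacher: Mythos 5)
Your proof is correct and follows essentially the same route as the paper: extract the absorption time $m_E$ from the ellipsoid estimates in the proof of Lemma~\ref{lemma:inner.approx.limit}, combine it with the threshold from Theorem~\ref{thm.polyhedrality}, and use $I_k(E)=S_{k-1}\cap L_k$ together with Lemma~\ref{lemma:inner.approx}(i) to get both inclusions. Your parenthetical observation that $t=m_E$ already suffices (since $\mathcal S\subseteq S_{k-1}$ holds trivially) is also valid and slightly streamlines the argument.
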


}

%\section{Finding the best invariant inner ellipsoid by semidefinite programming}\label{sec:invariant.ellipsoid.SDP}

\subsubsection{Computation of $I_r(E)$}\label{subsubsec:IrE.LDLP}
The construction of the sets $I_r(E)$ requires access to an invariant set $E\subseteq P$. For an R-LD-LP with $\rho(G)<1$, an invariant set for the dynamics that is always guaranteed to exist is an ellipsoid $E=\{x \in \mathbb{R}^n|~x^TMx \leq \alpha\}$. To find the positive definite matrix $M$ (that ensures $G^TMG\preceq M$) and the positive scalar $\alpha$ (that ensures $E\subseteq P$), one can follow the methodology described in steps 1 and 2 of the proof of Theorem~\ref{thm.polyhedrality}. Note that these two steps only involve matrix inversion and basic arithmetic operations. 

%we follow the computational methodology described in the proof of Theorem ***2***, Steps 1 and 2. The first step consists in solving the well-known Lyapunov equation $G^THG-H=-I$, where $I$ is the identity matrix, to obtain the matrix $M$. The ellipsoid thus defined will be invariant for any $\alpha$ and our goal is to pick $\alpha$ in such a way that $E$ is contained in the polytope. A way of doing this is given in the proof of Theorem **2**, Step 2: we pick $$\alpha=\min_{i \in \{1,\ldots,m\}} \{b_i^2/a_i^TH^{-1}a_i\}.$$

With $M$ and $\alpha$ fixed, one can solve the following sequence of convex quadratic programs,

%\min_{x \in \mathbb{R}^n} &c^Tx\\ \nonumber
%\text{s.t. } &(G^rx)^TM(G^r x) \leq \alpha,\\ \nonumber
%&AG^kx \leq 1, k=0,\ldots,r-1, \nonumber

\begin{flalign} \label{eq:QP}
\underset{x\in\mathbb{R}^n}{\text{minimize}} \hspace*{1cm} & c^Tx \nonumber \\
\text{s.t.} \hspace*{1cm} &  (G^rx)^TM(G^r x) \leq \alpha,  \\ \nonumber
\ & AG^kx \leq 1, k=0,\ldots,r-1,   \nonumber
\end{flalign}
indexed by an integer\footnote{Note that when $r=0$, the final set of constraints drop out.} $r\geq 0$. The feasible sets of these optimization problems are the sets $I_r(E)$ as defined in (\ref{eq:Ir(E)}), which reside inside the feasible set $\mathcal{S}$ of our R-LD-LP. Hence, the optimal values of these convex quadratic programs are upper bounds on the optimal value of the R-LD-LP. By Lemma~\ref{lemma:inner.approx}, these upper bounds monotonically improve with $r$, and by Corollary~\ref{corollary:inner.approx.finite}, they reach the optimal value of the R-LD-LP in a finite number of steps. Although this approach is simple and convergent, it is suboptimal in terms of the quality of the upper bounds that it returns in each iteration. We explain how one can do better next.

\subsubsection{Computation of improved inner approximations}\label{subsubsec:invariant.ellipsoid.SDP}
%==========================================================

%%[*To do: the wording in this section needs to be updated by Amirali in view of the next switched linear dynamics case which is to be added. Also, the details of why the SDP formulation works need to be added in form of a theorem. Right now we have a high-level explanation of it with no details.*]

%The main idea for improving the quality of upper bounds provided by the algorithm in the previous subsection

Our improvement over the algorithm suggested in the last subsection is based on answers to the following two basic questions: (i) Instead of finding \emph{any} invariant ellipsoid $E$ and then optimizing over the sets $I_r(E)$ generated by it, can we search for an ``optimal'' invariant ellipsoid at the same time as we optimize over $I_r(E)$? (ii) Instead of working with a fixed invariant ellipsoid throughout the hierarchy, can we reoptimize the ellipsoid at each iteration? We show here that \emph{semidefinite programming} (SDP) can achieve both of these goals at once.

Let $r=0,1,\ldots$ be the index of our hierarchy. At step $r$, the strategy is to find an ellipsoid $E_r$, defined as the unit sublevel set of a quadratic form $x^TH_rx$, which satisfies the following properties:

\begin{enumerate}
	\item The set $E_r$ is invariant under the dynamics $x_{k+1}=Gx_k$.
	\item The set $E_r$ is contained in the polytope $P$. 
	\item Among all ellipsoids that have the previous two properties, $E_r$ is one that gives the minimum value of $c^Tx$ as $x$ ranges over the points in $\mathbb{R}^n$ that land in $E_r$ after $r$ steps and do not leave $P$ before doing so. (The set of such points will be denoted by $I_r(E_r)$.) %(The integer $r\geq 0$ will serve as the index of our hierarchy.)
\end{enumerate} 

As we are under the running assumption that the origin is in the interior of our polytope $P,$ the vector $b\in\mathbb{R}^m$ in the description $\{x\in\mathbb{R}^n| Ax\leq b\}$ of $P$ is elementwise positive. Hence, by rescaling, we can without loss of generality take $b$ to be the all ones vector. With this in mind, here is a mathematical description of the above optimization problem\footnote{We use the notation $S^{n\times n}$ to refer to the set of $n\times n$ real symmetric matrices.}:

% to be the all ones vector without loss of generality as we are under the running assumption that the origin is in the interior $P$ and hence the entries of $b$ are positive and can be rescaled.

%$P=\{x\in\mathbb{R}^n| \ Ax\leq 1\}$.
%
%Here, the vector $b\in\mathbb{R}^m$ in the description $\{x\in\mathbb{R}^n| Ax\leq b\}$ of our polytope $P$ is taken to be the all ones vector without loss of generality as we are under the running assumption that the origin is in the interior $P$ and hence the entries of $b$ are positive and can be rescaled.

%Here is a mathematical description of this optimization problem: %\footnote{Here, $S^{n\times n}$ denotes the set of $n\times n$ real symmetric matrices. Note again that when $r=0$, the final set of constraints drop out.}

\begin{flalign} \label{inner.ellipse.nonconvex.formulation}
\underset{x\in\mathbb{R}^n, H\in S^{n\times n}}{\text{minimize}} \hspace*{1cm} & c^Tx \nonumber \\
\text{s.t.} \hspace*{1cm} &  H\succ 0, \\ \nonumber
\ & G^THG\preceq H,  \\ \nonumber
\ & \forall z\in\mathbb{R}^n,\  z^THz\leq 1\implies Az\leq 1, \\\nonumber 
\ & (G^rx)^TH(G^rx)\leq 1,  \\ \nonumber
\ & AG^k x\leq 1, k=0,\ldots,r-1.   \nonumber
\end{flalign}
%
%\begin{equation}\label{eq:LPChol}
%\begin{aligned}
%\ &\min C \cdot X\\
%&\text{s.t. } A_i \cdot X=b_i, ~i=1,\ldots,m,\\
%&X \in DD(U_k).
%\end{aligned}
%\end{equation}

%%\small{
%\begin{equation}\label{eq:sdp_nonconvex}
%\begin{array}{ll}
%\min_{x,H} \{c^Tx:&\ \\
%P\succ 0, G^TPG\preceq P, x^TPx\leq 1, &\ \\
%{[}\forall z, z^TPz\leq 1\implies Az\leq b\ {]}\}. &\ 
%\end{array}
%\end{equation}
%%} \normalsize 

%Here, the vector $b\in\mathbb{R}^m$ defining the right hand side of our polytope $P=\{x\in\mathbb{R}^n| Ax\leq b\}$ is taken to be the all ones vector without loss of generality as we are under the running assumption that the origin is in the interior $P$ and hence the entries of $b$ are nonzero and can be rescaled.
%
%\footnote{If there are multiple optimal solutions, we can pick $H_r^*$ arbitrarily among them; for example the optimal solution with minimum Frobenius norm.} 
%
If the pair $(x_r,H_r)$ is an optimal solution to this problem, then we let $$E_r=\{z\in\mathbb{R}^n|~\ z^TH_rz\leq 1\},$$ 
\begin{equation}\label{eq:I_r(E_r)}
I_r(E_r)=\{z\in\mathbb{R}^n|~\ (G^rz)^TH_r(G^rz)\leq 1, AG^k z\leq 1, k=0,\ldots,r-1\},
\end{equation}
and $x_r$ will be our candidate suboptimal solution to R-LD-LP. There are two challenges to overcome with the formulation in (\ref{inner.ellipse.nonconvex.formulation}). First, the constraint $$\forall z\in\mathbb{R}^n,\  z^THz\leq 1\implies Az\leq 1$$ needs to be rewritten to remove the universal quantifier.
\st{It may seem natural to employ the $\mathcal{S}$-procedure~
%\cite{s_lemma_survey} 
for this purpose. However, even if one does this, the unknown ``multiplier'' of the $\mathcal{S}$-procedure will multiply the unknown matrix $H$, resulting in a nonconvex constraint. } 
Second, the decision variables $x$ and $H$ are multiplying each other in the constraint $(G^rx)^TH(G^rx)\leq 1$, which again makes the constraint nonconvex. Nevertheless, we show next that one can get around these issues and formulate problem (\ref{inner.ellipse.nonconvex.formulation}) \st{\emph{exactly}} \aaa{exactly} as an SDP.  The main ingredients of the proof are Schur complements, polar duality theory of convex sets (see e.g.~\cite{barvinok2002course},~\cite{rostalski2010dualities}), and duality of linear dynamical systems under transposition of the matrix $G$. 
\st{These ideas and the exact SDP formulation have first appeared in [4].}~%\cite{rdo_cdc15}. 
\aaa{Furthermore, we establish that the feasible solutions to R-LD-LP that are produced by our SDPs become optimal in a number of steps that can be computed in polynomial time.}

\begin{theorem} \label{thm:inner.SDP} Suppose $\rho(G)<1$ and the set $P=\{x\in\mathbb{R}^n|\ Ax\leq 1 \}$ is bounded. Let $a_i$ denote the transpose of the $i$-th row of the matrix $A\in\mathbb{R}^{m\times n}$ and consider the following semidefinite program:
	\begin{flalign} \label{eq:inner.ellipse.sdp}
	\underset{x\in\mathbb{R}^n, Q\in S^{n\times n}}{\text{\emph{minimize}}} \hspace*{1cm} & c^Tx \nonumber \\
	\text{\emph{s.t.}} \hspace*{1cm} &  Q\succ 0, \\ \nonumber
	\ & GQG^T\preceq Q,  \\ \nonumber
	\ & a_i^TQa_i\leq 1, i=1,\ldots,m,  \\ \nonumber
	\ &  \begin{pmatrix}
	Q &G^rx \\ (G^rx)^T &1
	\end{pmatrix}\succeq 0, \\
	\ & AG^k x\leq 1, k=0,\ldots,r-1.   \nonumber
	\end{flalign}
	%{
	%\begin{equation}\label{eq:sdp_convex}
	%\begin{array}{ll}
	%\min_{x,Q} \{c^Tx:&\ \\
	%Q\succ 0, GQG^T\preceq Q, \begin{pmatrix}
	%Q &x \\ x^T &1
	%\end{pmatrix}\succeq 0, a_i^TQa_i\leq 1, i=1,\ldots,m \},&\ 
	%\end{array}
	%\end{equation}
	%}
	%where $a_i$ is the transpose of the $i$-th row of $A$.
	Then, 
	\begin{enumerate}[(i)]
		\item the optimal values of problems (\ref{inner.ellipse.nonconvex.formulation}) and (\ref{eq:inner.ellipse.sdp}) are the same, the optimal vectors $x_r$ in the two problems are the same, and the optimal matrices $H_r$ and $Q_r$ are related via $Q_r=H_r^{-1}$. 
		\item the optimal values of the SDPs in (\ref{eq:inner.ellipse.sdp}) provide upper bounds on the optimal value of the R-LD-LP, are nonincreasing with $r$, and reach the optimal value of the R-LD-LP in a (finite) number of steps $\bar{r}$ which can be computed in time polynomial in $\sigma(A,G)$. Moreover, any optimal solution $x_r$ to the SDP in (\ref{eq:inner.ellipse.sdp}) with $r\geq \bar{r}$ is an optimal solution to R-LD-LP. 	
	\end{enumerate}
\end{theorem}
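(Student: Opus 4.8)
The plan is to prove (i) through the single substitution $Q=H^{-1}$, which simultaneously removes all three obstacles present in (\ref{inner.ellipse.nonconvex.formulation}), and then to read off (ii) from (i), Lemma~\ref{lemma:inner.approx}, and the effective estimates already produced inside the proof of Theorem~\ref{thm.polyhedrality} (whose hypotheses hold here, since $P=\{x:Ax\le 1\}$ is bounded, contains the origin in its interior, and $\rho(G)<1$). For part (i): since $H\mapsto H^{-1}$ is a bijection of the positive definite cone onto itself and leaves $c^Tx$ and the affine constraints $AG^kx\le 1$ untouched, it suffices to check that, with $Q=H^{-1}$, each of the three remaining constraints of (\ref{inner.ellipse.nonconvex.formulation}) is \emph{equivalent} to the corresponding constraint of (\ref{eq:inner.ellipse.sdp}). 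First, $G^THG\preceq H\iff GQG^T\preceq Q$; conceptually this is the statement that the polar ellipsoid $E^\circ=\{y:y^TQy\le 1\}$ is invariant under the transposed dynamics $y\mapsto G^Ty$, and algebraically it follows from the fact that $\tilde G^T\tilde G$ and $\tilde G\tilde G^T$ share the same spectrum, where $\tilde G:=H^{1/2}GH^{-1/2}$, so no nonsingularity of $G$ is needed. Second, the quantified inclusion ``$z^THz\le 1\Rightarrow Az\le 1$'' says exactly $\{z:z^THz\le 1\}\subseteq P$; evaluating the support function of the ellipsoid (equivalently, since $0\in\mathrm{int}(P)$, requiring $P^\circ=\mathrm{conv}\{0,a_1,\dots,a_m\}\subseteq E^\circ$) turns this into $a_i^TQa_i\le 1$, $i=1,\dots,m$. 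Third, the bilinear constraint $(G^rx)^TH(G^rx)\le 1$ becomes, by a Schur complement with respect to the block $Q\succ 0$ after writing $H=Q^{-1}$, the linear matrix inequality of (\ref{eq:inner.ellipse.sdp}). Composing these equivalences yields a value- and $x$-preserving bijection between the feasible sets, which gives (i).

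For part (ii): every $x$ feasible for (\ref{eq:inner.ellipse.sdp}) corresponds via (i) to a pair $(x,H)$ feasible for (\ref{inner.ellipse.nonconvex.formulation}), and then $x\in I_r(E)$ with $E=\{z:z^THz\le 1\}$ invariant and contained in $P$, so $x\in\mathcal S$ by Lemma~\ref{lemma:inner.approx}(i); hence each SDP is a restriction of the R-LD-LP and its value is an upper bound on the optimal value $p^\star$ of the R-LD-LP (which is finite and attained, as $0\in\mathcal S\subseteq P$ and $P$ is bounded). The SDP is feasible and its feasible region is bounded: the ellipsoid $E(\alpha_1)$ from Steps~1--2 of the proof of Theorem~\ref{thm.polyhedrality} is invariant, lies in $P$, and has $I_r(E(\alpha_1))\supseteq I_0(E(\alpha_1))\neq\emptyset$; the variable $x$ is confined to the bounded set $P$; and $Q$ is bounded because $P$ bounded with $0$ in its interior forces $\{a_i\}$ to positively span $\mathbb R^n$, which together with $a_i^TQa_i\le 1$ bounds $Q$. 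Hence the minimum is attained. Monotonicity in $r$ is Lemma~\ref{lemma:inner.approx}(ii): if $(x_r,H_r)$ is optimal at stage $r$ with ellipsoid $E_r$, then $x_r\in I_r(E_r)\subseteq I_{r+1}(E_r)$, so $(x_r,H_r)$ is feasible at stage $r+1$ and the value does not increase.

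Finite convergence is the remaining piece. Let $E_0=\{z:z^TMz\le\alpha_1\}$ and $\bar r$ be, respectively, the invariant ellipsoid and the integer constructed in the proof of Theorem~\ref{thm.polyhedrality}, so that $\bar r$ is computable in time polynomial in $\sigma(A,b,G)$—hence in $\sigma(A,G)$, since $b=\mathbf 1$ here—and $G^{\bar r}z\in E_0$ for every $z\in P$ (this is exactly what Step~4 of that proof establishes). Fix $r\ge\bar r$ and let $x^\star\in\mathcal S$ attain $p^\star$. Then $AG^kx^\star\le 1$ for all $k$, and $G^rx^\star=G^{\bar r}\bigl(G^{r-\bar r}x^\star\bigr)\in E_0$ since $G^{r-\bar r}x^\star\in P$; therefore $(x^\star, M/\alpha_1)$ is feasible for (\ref{inner.ellipse.nonconvex.formulation}), so by (i) $x^\star$ is feasible for (\ref{eq:inner.ellipse.sdp}) with objective value $p^\star$. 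Combined with the upper-bound direction, the optimal value of (\ref{eq:inner.ellipse.sdp}) equals $p^\star$ for every $r\ge\bar r$; and since any optimal $x_r$ lies in $\mathcal S$ and has $c^Tx_r=p^\star$, it is optimal for the R-LD-LP.

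I expect the crux to be part (i): arranging that the one substitution $Q=H^{-1}$ resolves all three difficulties at once and, in particular, verifying $G^THG\preceq H\iff GQG^T\preceq Q$ in full generality—this is not simply ``invert both sides,'' since $G$ may be singular and the inequalities are non-strict. Once (i) is established, part (ii) is essentially bookkeeping on top of Lemma~\ref{lemma:inner.approx} and the constructions already carried out in the proof of Theorem~\ref{thm.polyhedrality}.
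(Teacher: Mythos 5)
Your proposal is correct and follows essentially the same route as the paper: part (i) via the reparameterization $Q=H^{-1}$, Schur complements, and the ellipsoid-in-polytope characterization (support function/polar duality), and part (ii) by combining Lemma~\ref{lemma:inner.approx} with the quantities $M,\alpha_1,\gamma,\bar r$ constructed in the proof of Theorem~\ref{thm.polyhedrality} to exhibit a feasible pair $(x^\star, M/\alpha_1)$ at every level $r\ge\bar r$. The only differences are cosmetic: you prove $G^THG\preceq H\iff GQG^T\preceq Q$ via the common spectrum of $\tilde G^T\tilde G$ and $\tilde G\tilde G^T$ rather than the paper's double Schur complement, and you add an (optional) attainment argument the paper leaves implicit.
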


\begin{proof}
	
	(i) We show that a pair $(x,H)$ is feasible to (\ref{inner.ellipse.nonconvex.formulation}) if and only if the pair $(x,H^{-1})$ is feasible to (\ref{eq:inner.ellipse.sdp}). Indeed, we have $H \succ 0 \Leftrightarrow H^{-1}\succ 0$ as the eigenvalues of $H^{-1}$ are the inverse of the eigenvalues of $H$. Moreover, by two applications of the Schur complement (see, e.g.,~\cite[Appendix A.5.5]{BoydBook}), we observe that  
	$$ G^THG\preceq H  \Leftrightarrow \begin{pmatrix}
	H^{-1} &G \\ G^T &H
	\end{pmatrix}\succeq 0 \Leftrightarrow    GH^{-1}G^T\preceq H^{-1}.$$
	We also have that
	$$(G^rx)^TH(G^rx)\leq 1 \Leftrightarrow    \begin{pmatrix}
	H^{-1} &G^rx \\ (G^rx)^T &1
	\end{pmatrix}\succeq 0,$$
due to the Schur complement once again. Recall now that for a set $T\subseteq \mathbb{R}^n,$ its polar dual $T^\circ$ is defined as $$T^\circ\mathrel{\mathop:}=\{y\in\mathbb{R}^n|\ y^Tx\leq 1, \forall x \in T  \}.$$
	
	Let $E\mathrel{\mathop:}=\{z\in\mathbb{R}^n| \ z^THz\leq 1  \}$ and $P\mathrel{\mathop:}=\{z\in\mathbb{R}^n| \ Az\leq 1  \}$. One can verify that (i) $E\subseteq P \Leftrightarrow P^\circ\subseteq E^\circ$, (ii) $E^\circ=\{y\in\mathbb{R}^n|\ y^TH^{-1}y\leq 1 \}$, and (iii) $P^\circ=\conv\{a_1,\ldots,a_m\},$ where $conv$ here denotes the convex hull operation. Hence we have 
	$$ (\forall z\in\mathbb{R}^n, \ z^THz\leq 1\implies Az\leq 1) \Leftrightarrow   a_i^TH^{-1}a_i\leq 1, i=1,\ldots,m.$$

	(ii) The statement that the optimal value of (\ref{eq:inner.ellipse.sdp}) is an upper bound on the optimal value of the R-LD-LP follows from the fact that this SDP is constraining the optimal solution $x_r$ to be in $I_r(E_r)$, as defined in (\ref{eq:I_r(E_r)}), which is contained in $\mathcal{S}$ by construction (cf. Lemma~\ref{lemma:inner.approx}). Furthermore, if a pair $(x,Q)$ is feasible to the SDP in (\ref{eq:inner.ellipse.sdp}) at level $r$, then it is also feasible to the SDP at level $r+1$. This is because $E\mathrel{\mathop:}=\{y\in\mathbb{R}^n| \ y^TQ^{-1}y\leq 1\}\subseteq P,$ and $G^rx\in E\Rightarrow G^{r+1}x\in E$ by invariance of $E$ under $G$. Hence the claim about the monotonic improvement of the upper bounds follows. 
	
	To prove the statement about finite termination of this SDP hierarchy in a polynomially-computable number of steps, let $M\succ 0$ be the unique solution to the linear system  $G^TMG-M=-I$, $\alpha_1>0$ be as in (\ref{eq:alpha1}) with $b_1=\cdots=b_m=1$, $\alpha_2$ be as in (\ref{eq:alpha2}), $\gamma$ be as in (\ref{eq:shrinkage}), and $$\bar{r}=\lc \frac{({ \alpha_2}/{ \alpha_1})-1}{(1- \gamma)} \rc.$$ The proof of Theorem~\ref{thm.polyhedrality} already shows that this number can be computed in polynomial time. Let $\bar{x}$ be any optimal solution to the R-LD-LP. We claim that the pair $(\bar{x},\alpha_1 M^{-1})$ is a feasible solution to the SDP in (\ref{eq:inner.ellipse.sdp}) with $r=\bar{r}$. Clearly, the constraints $AG^k\bar x\leq 1, k=0,\ldots, \bar{r}-1$ are satisfied as $\bar{x}\in\mathcal{S}$. Moreover, the proof of Theorem~\ref{thm.polyhedrality} shows that the set $\bar{E}\mathrel{\mathop:}=\{y\in\mathbb{R}^n| \ y^T\frac{M}{\alpha_1}y\leq 1\}$ is contained in $P$ and is such that $G^{\bar{r}}x\in \bar{E}, \forall x\in P.$ This, together with the equation $G^TMG-M=-I$ and the fact that $\bar{x}\in P,$ implies that the pair $(\bar{x},\frac{M}{\alpha_1})$ is feasible for the problem in (\ref{inner.ellipse.nonconvex.formulation}) with $r=\bar{r}$. In view of the proof of part (i) of the current theorem, our claim about feasibility of $(\bar{x},\alpha_1 M^{-1})$ to the SDP in (\ref{eq:inner.ellipse.sdp}) with $r=\bar{r}$ follows. To finish the proof, let $(x_{\bar{r}},Q_{\bar{r}})$ be an optimal solution to this SDP. We must have $c^Tx_{\bar{r}}\leq c^T\bar{x}$ as we have just argued $\bar{x}$ is feasible to the SDP. Yet $c^Tx_{\bar{r}}\geq c^T\bar{x}$ as $I_{\bar{r}}(E_{\bar{r}})\subseteq \mathcal{S}$ and $x_{\bar{r}}\in I_{\bar{r}}(E_{\bar{r}})$. Hence, the optimal value of the SDP matches the optimal value of the R-LD-LP for all $r\geq \bar{r}$. Consequently, optimal solutions $x_r$ to the SDP must be optimal to the R-LD-LP for all $r\geq \bar{r}$ as they achieve the optimal value and belong to $\mathcal{S}$. 	
%	Finally, finite convergence can be seen from Corollary~\ref{corollary:inner.approx.finite}, and the fact that the optimal value of the the SDP in (\ref{eq:inner.ellipse.sdp}) at level $r$ must be smaller than or equal to the optimal value of the convex quadratic program in (\ref{eq:QP}) at level $r$ (independent of the initial ellipsoid $E$ that the latter program is initialized with).	
	%By Lemma~\ref{lemma:inner.approx}, as $r$ increases, the optimal values of the SDPs in (\ref{eq:inner.ellipse.sdp}) provide a non-increasing sequence of upper bounds on the optimal value of the underlying R-LD-LP. These upper bounds reach the exact optimal value in finite time as promised by Corollary~\ref{corollary:inner.approx.finite}. 
\end{proof}

We observe that the size of the semidefinite constraints in (\ref{eq:inner.ellipse.sdp}), which are the most expensive constraints in that optimization problem, does not grow with $r$. Let us now give an example.

%Old example from CDC paper
%\begin{example}
%Consider the following instance of R-LD-LP in  $\mathbb{R}^2$:
%{
%\begin{equation}\label{eq:example2b}
%A=\begin{bmatrix}
%~~1 & ~~0 \\-1 &~~0 \\ ~~0 & ~~1\\ ~~0& -1
%\end{bmatrix}, b=\begin{bmatrix}
%1 \\ 1 \\ 1 \\ 1
%\end{bmatrix},\text{~~and~~}
% G=\frac{4}{5}\begin{bmatrix}
%\cos(\theta) &\sin(\theta) \\-\sin(\theta) &cos(\theta) 
%\end{bmatrix},
%\end{equation}
%}
%with $\theta=\frac{\pi}{6}$ and $c=(-1,-1)^T$.
%
%Figure~\ref{fig:inner.ellipsoid} depicts the true feasible set of R-LD-LP, the invariant ellipsoid that approximates it from the inside, and the optimal points achieved by minimizing $c^Tx$ over the two sets. The true optimal value of R-LD-LP is 1.6795 and the upper bound we are getting from the SDP in (\ref{eq:inner.ellipse.sdp}) with $r=1$ is 1.8660.
% 
%\end{Example}

\begin{Example}\label{ex:r-ld.dp}
	Consider an R-LD-LP defined by the following data:
	\begin{align*}
	A=\begin{pmatrix} 1 & 0\\ -1.5 & 0 \\ 0 & 1 \\ 0 & -1 \\ 1 & 1 \end{pmatrix}, \quad 
	b=\begin{pmatrix} 1\\ 1 \\ 1\\ 1 \end{pmatrix}, \quad
	c=\begin{pmatrix} -0.5 & -1 \end{pmatrix}, \quad
	G=\frac{4}{5} \begin{pmatrix} \cos(\theta) & \sin(\theta),\\ -\sin(\theta) & \cos(\theta)\end{pmatrix} \text{ where } \theta=\frac{\pi}{6}.
	\end{align*}
	In Figure \ref{fig:exRLDLP}, we plot the inner approximations $I_r$ and outer approximations $S_r$ to $\mathcal{S}$ for $r=0$ (on the left) and $r=1$ (on the right). Note that when $r=0$, $S_0$ is simply $P$. We also plot the optimal solution to the problem of minimizing $c^Tx$ over $S_0$ (resp. $I_0$) in Figure \ref{fig:RLDLPr0} and over $S_1$ (resp. $I_1$) in Figure \ref{fig:RLDLPr1}. We remark that the solutions do not coincide for $r=0$ but they do for $r=1$. Hence, our method converges in one step. This is further evidenced by the sequence of lower and upper bounds on the optimal value of the R-LD-LP given in Table \ref{tab:ulbs.RLDLP}, which shows that we have reached the exact optimal value at $r=1$. %Note that for $r=0$, there is a gap between the upper and lower bound but for $r=1$, there is no gap. We conclude that the true optimal value of **R-LD-LP** is $-0.942$.
	
	\begin{figure}[H]
		\centering
		\begin{subfigure}{.5\textwidth}
			\centering
			\includegraphics[scale=0.25]{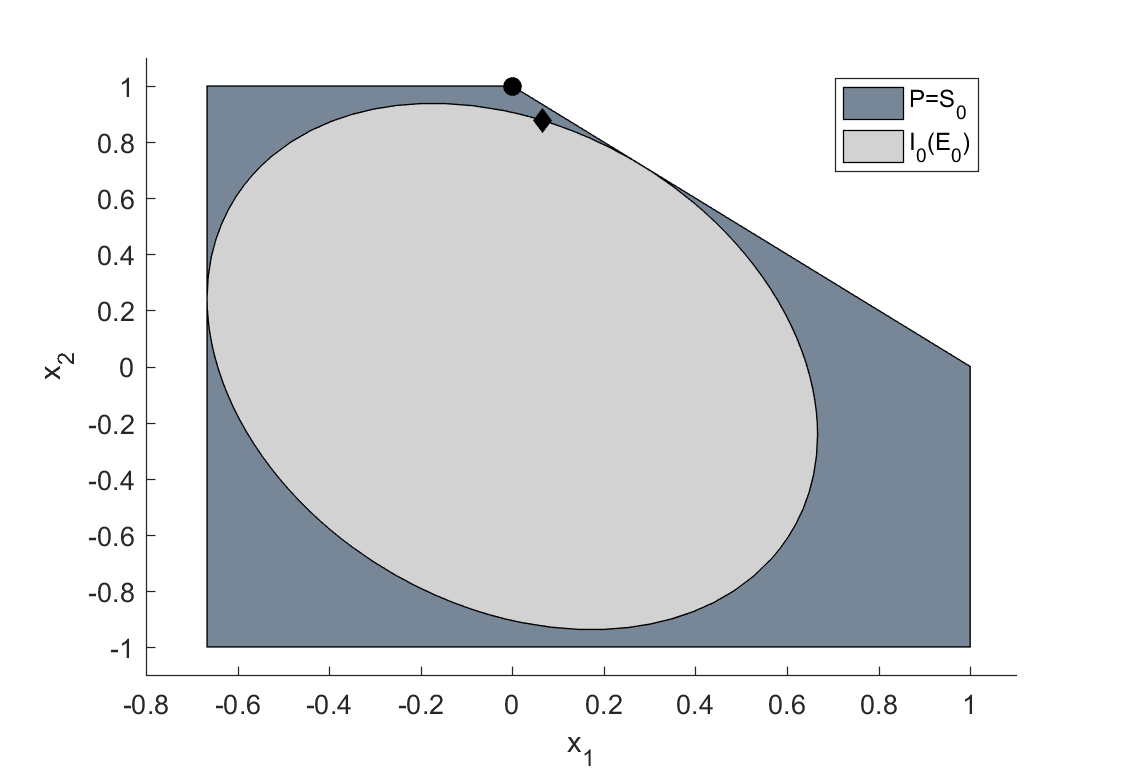}
			\caption{$r=0$}
			\label{fig:RLDLPr0}
		\end{subfigure}%
		\begin{subfigure}{.5\textwidth}
			\centering
			\includegraphics[scale=0.25]{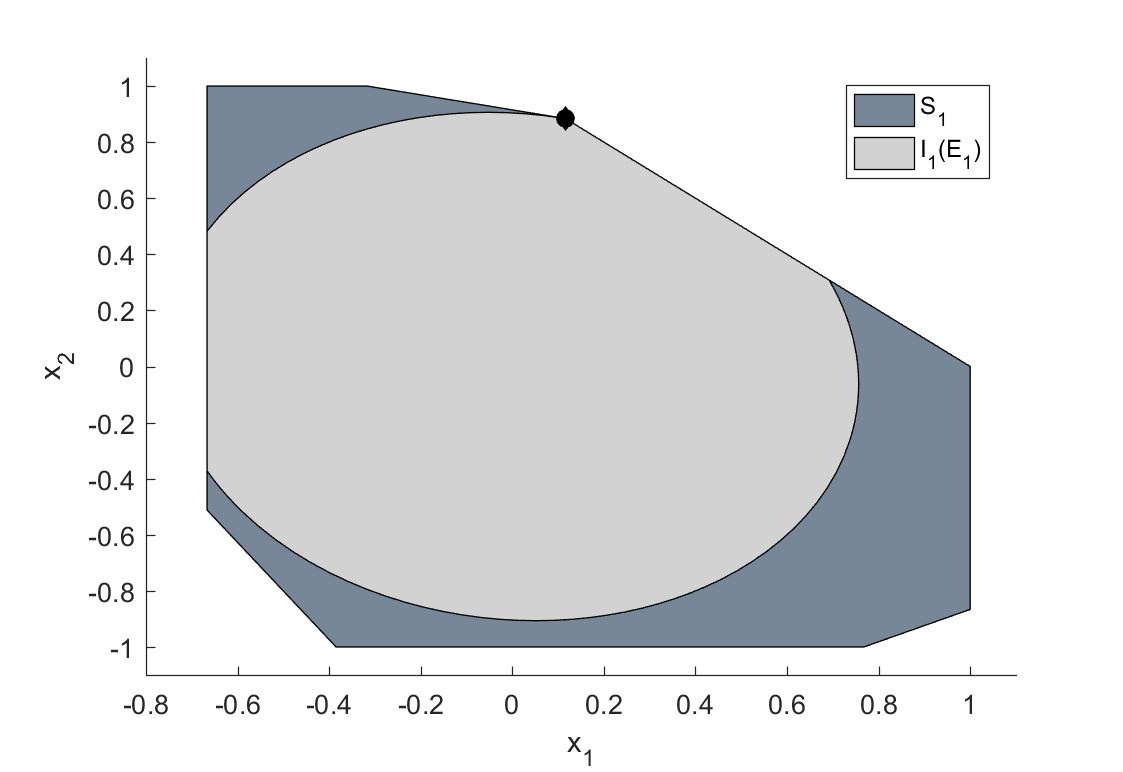}
			\caption{$r=1$}
			\label{fig:RLDLPr1}
		\end{subfigure}
		\caption{Outer and inner approximations to the feasible set of the R-LD-LP in Example~\ref{ex:r-ld.dp}.}
		\label{fig:exRLDLP}
	\end{figure}
	
	\begin{table}[H]
		\begin{center}
			\begin{tabular}{ c|c|c } 
				%		\hline
				& $r=0$ & $r=1$ \\ 
				\hline
				Lower bounds obtained by minimizing $c^Tx$ over $S_r$ & -1 & -0.9420 \\ 
				Upper bounds obtained by minimizing $c^Tx$ over $I_r$ & -0.9105 & -0.9420 \\ 
				%\hline
			\end{tabular}
		\end{center}
		\caption{Our lower and upper bounds on the optimal value of the R-LD-LP in Example~\ref{ex:r-ld.dp}.}
		\label{tab:ulbs.RLDLP}
	\end{table}
	
	Figure \ref{fig:RLDLPexp} better demonstrates what the SDP is achieving at $r=1$. The set $I_1(E_1)$ in Figure~\ref{fig:RLDLPr1_2} is the set of points in $P$ that land in the set $E_1$ of Figure~\ref{fig:RLDLPr1exp} after one application of $G$. Both $E_1$ and $I_1(E_1)$ are by construction invariant inner approximations to $\mathcal{S}$. But as expected, $E_1\subseteq I_1(E_1)$, which is why $I_1(E_1)$ is the inner approximation of interest at $r=1$. Note also that the ellipsoid $E_1$ that the SDP finds at $r=1$ (Figure~\ref{fig:RLDLPr1exp}) is very different from the ellipsoid $E_0$ than the SDP finds at $r=0$ (Figure~\ref{fig:RLDLPr0}).
	
	%, we show that the set $I_1(E_1)$
	%
	%
	%
	%we plot side-by-side two different interpretations of the case $r=1$. In Figure \ref{fig:RLDLPr1exp}, we have plotted the ellipsoid that gives the minimum value of $c^Tx$ as $x$ ranges over the points in $\mathbb{R}^n$ that land in the ellipsoid after
	%$r=1$ step and do not leave $P$ before doing so. In Figure \ref{fig:RLDLPr1_2}, we have plotted the set of points that land in the ellipsoid in Figure \ref{fig:RLDLPr1exp} in the next step. Note that the ellipsoids in Figures \ref{fig:RLDLPr1exp} and \ref{fig:RLDLPr0} are completely different.
	
	\begin{figure} %[H]
		\centering
		\begin{subfigure}{.5\textwidth}
			\centering
			\includegraphics[scale=0.25]{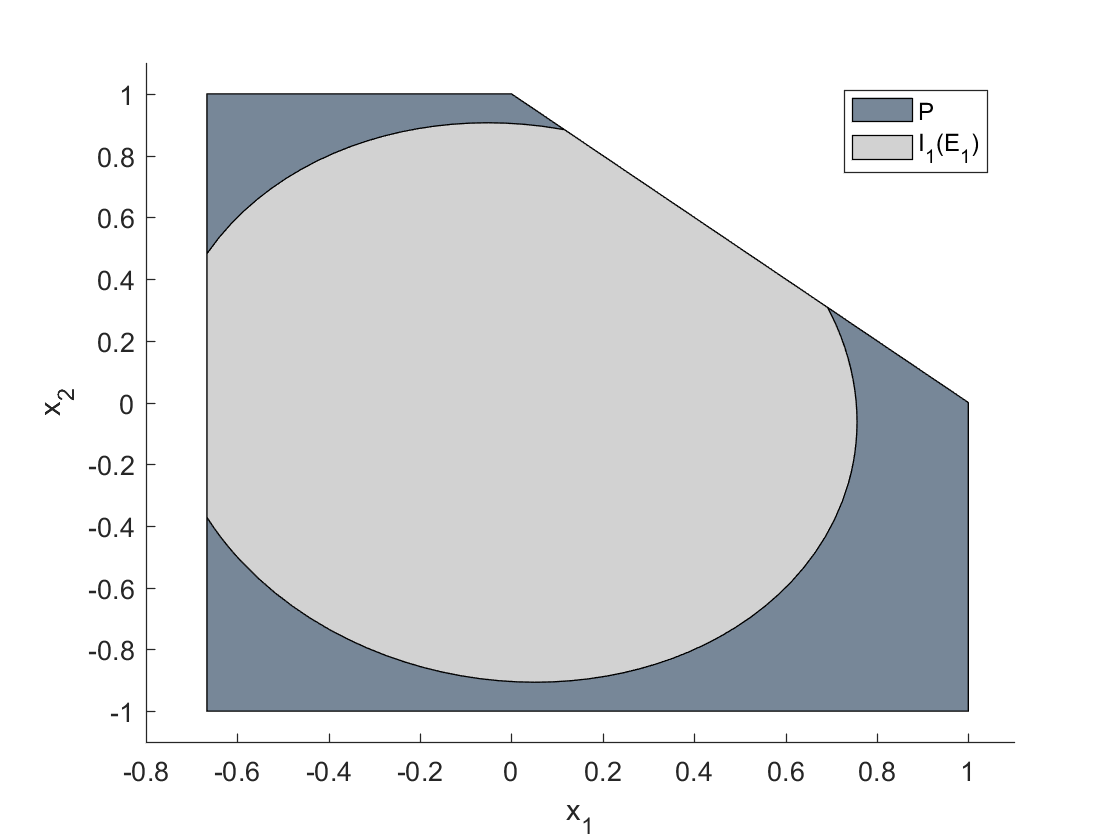}
			\caption{The set $I_1(E_1)$}
			\label{fig:RLDLPr1_2}
		\end{subfigure}%
		\begin{subfigure}{.5\textwidth}
			\centering
			\includegraphics[scale=0.25]{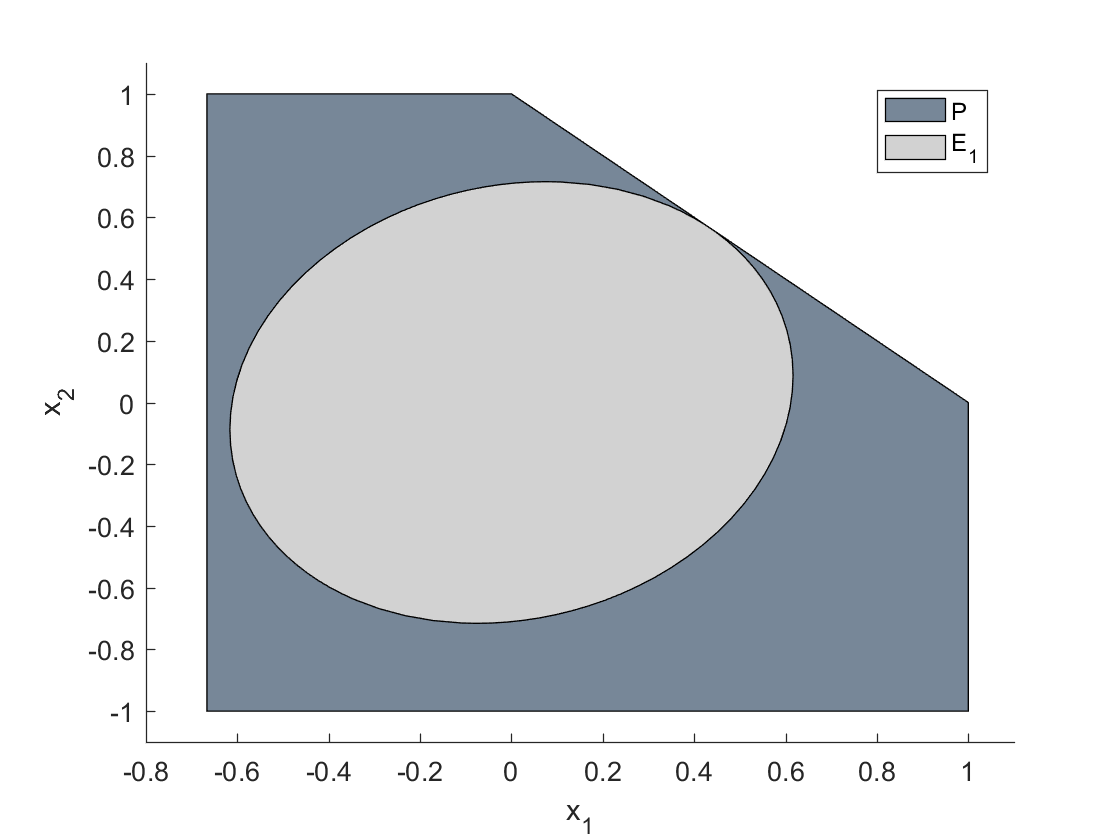}
			\caption{The set $E_1$}
			\label{fig:RLDLPr1exp}
		\end{subfigure}
		\caption{The set $I_1(E_1)$ is the set of points in $P$ that land in the ellipsoid $E_1$ after one application of $G$.}
		\label{fig:RLDLPexp}
	\end{figure}
	
\end{Example}

\section{Robust to uncertain \aaa{and time-varying} linear dynamics linear programming}\label{sec:R-ULD-LP}
%==========================================================
%[*AAA to Oktay: I am omitting proofs of some statements because they are very similar to the linear case and would waste space. Nevertheless, let's check all claims in this section independently.*]

In the theory of robust control, there has long been an interest in analyzing the behavior of dynamical systems whose parameters are not exactly known and can vary in time. This is motivated by the fact that in many practical applications, the physical or social dynamics of interest are hard to model exactly and are subject to external disturbances that vary with time. \aaa{\st{(Consider, e.g., the dynamics of  the spread of an epidemic, where the frequency of social interactions changes depending on the time of day, or that of a chemical reaction, whose behavior varies depending on the temperature of the environment.)}}

We consider one of the most widely-studied linear models that captures both parameter uncertainty and dependence on time (see, e.g.,~\cite{Raphael_Book},~\cite{shorten2007stability} and references therein). In this model, one is given $s$ real $n\times n$ matrices $G_1,\ldots,G_s $ and assumes that the true linear dynamics are given by a matrix in their convex hull $\conv\{G_1,\ldots,G_s\}$. (This is a polytope in the space of $s\times s$ matrices whose extreme points are given.) Moreover, at each iteration, a different matrix from this polytope can govern the dynamics. This leads to the following uncertain and time-varying dynamical system
\begin{equation}\label{eq:difference.inclusion}
x_{k+1} \in \conv\{G_1,\ldots,G_s\}x_k,
\end{equation}
where $\conv\left\{G_1,\ldots,G_s\}x\mathrel{\mathop:}=\{\sum_{i=1}^s\lambda_i G_i x|~ \lambda_i\geq 0, \sum_{i=1}^s \lambda_i=1 \right\}$. %=conv\{G_1x_k,\ldots,G_sx_k\}
%One of the most widely-studied models that captures both parameter uncertainty and dependence on time (see, e.g., *****) is what is known as a \emph{switched linear system}. In this setting, one is given $s$ real $n\times n$ matrices $G_1,\ldots,G_s $, and assumes that the true linear dynamics is given by a matrix in their convex hull, which is allowed to change at every iteration:
%\begin{equation}\label{eq:difference.inclusion}
%x_{k+1} \in conv\{G_1,\ldots,G_s\}x_k.
%\end{equation}
\aaa{Note that this model encompasses %an uncertain, 
a nonlinear, time-invariant system $$x_{k+1}=g(x_k),$$ where $g:\mathbb{R}^n\rightarrow\mathbb{R}^n$ satisfies \begin{equation}\label{eq:uncertain.nl}
    \forall x\in\Omega, g(x)\in\ \conv\{G_1x,\ldots,G_sx\}
\end{equation} 
for the relevant subset $\Omega$ of $\mathbb{R}^n$ (see~\eqref{eq:opt.input}). Moreover, this model captures uncertainty in the nonlinear dynamics as well, as the dynamical system can be governed by \emph{any} map $g$ that satisfies~\eqref{eq:uncertain.nl}.}

%robustness would be against all maps $g$ that satisfy~\eqref{eq:uncertain.nl}.}

In this section, we are interested in studying linear programs that must remain robust against such a dynamical system. More precisely, a \emph{robust to uncertain \aaa{and time-varying} linear dynamics linear program} (\aaa{R-UTVLD-LP}) is an optimization problem of the form
\begin{align}\label{eq:r.uld.lp}
\min_{x_0 \in \mathbb{R}^n} \left\{c^Tx_0:  x_k\in P \ \mbox{for}\ k=0,1,2,\ldots, \mbox{u.t.d.} \ x_{k+1} \in \conv\{G_1,\ldots,G_s\}x_k \right\},
\end{align}
where $P=\{x \in \mathbb{R}^n |~ Ax\leq b\}$ is a given polyhedron. The input to this problem is fully defined by $$A\in\mathbb{R}^{m\times n}, b\in\mathbb{R}^{m}, c\in \mathbb{R}^n, G_1,\ldots, G_s\in\mathbb{R}^{n\times n}.$$ It is not hard to see that an \aaa{R-UTVLD-LP} can be equivalently formulated as 
\begin{equation}\nonumber
\min_{x_0 \in \mathbb{R}^n} \left\{c^Tx_0:  x_k\in P \ \mbox{for}\ k=0,1,2,\ldots, \mbox{u.t.d.} \ x_{k+1} \in \{G_1x_k,\ldots,G_sx_k\} \right\}.
\end{equation}
Indeed, it is straightforward to check that for any integer $k\geq 1$, a point $x\in P$ leaves $P$ by some product of length $k$ out of the matrices in $\conv\{G_1,\ldots,G_s\},$ if and only if it leaves $P$ by some product of length $k$ out of the matrices in $\{G_1,\ldots,G_s\}$. %Note also that an R-LD-LP is a special case of an R-ULD-LP corresponding to $s=1$.

Let $\mathcal{G}\mathrel{\mathop:}=\{G_1,\ldots,G_s\}$ and let $\mathcal{G}^k$ denote the set of all $s^k$ matrix products of length $k$ (with $\mathcal{G}^0$ consisting only of the identity matrix by convention). Let $$\mathcal{G}^*=\cup_{k=0}^\infty \mathcal{G}^k$$ be the set of all finite products from $\mathcal{G}$. An \aaa{R-UTVLD-LP} can then be reformulated as the following linear program with a countably infinite number of constraints:
\begin{equation}\label{eq:RULDLP}
\min_{x\in\R^n}\{c^Tx: Gx\in P, \forall G\in\mathcal{G}^*\}.
\end{equation}
%Indeed, it is straightforward to check that for any integer $k\geq 1$, a point $x\in P$ leaves $P$ by some product of length $k$ out of the matrices in $conv(\mathcal{G}),$ if it and only if it leaves $P$ by some product of length $k$ out of the matrices in $\mathcal{G}$. 
Note that an R-LD-LP is a special case of an \aaa{R-UTVLD-LP} with $s=1$. Throughout this section, we denote the feasible set of an \aaa{R-UTVLD-LP} by 
\begin{equation}\label{eq:feas.set.uld.lp}
\mathcal{S}\mathrel{\mathop{:}}= \bigcap_{k=0}^{\infty} \{x \in \mathbb{R}^n| ~AGx \leq b, \forall G\in\mathcal{G}^k\}.
\end{equation}

Clearly, the statement of Theorem~\ref{thm:S.closed.convex.invariant.non.polyhedral.nphard} still applies to this set. Indeed, $\mathcal{S}$ is closed and convex as an infinite intersection of closed convex sets, and, by definition, invariant under multiplication by $G_1,\ldots, G_s$. Moreover, $\mathcal{S}$ is not always polyhedral even when $s=1$, and testing membership of a given point to $\mathcal{S}$ is NP-hard already when $s=1$. 
%
%The situation in fact gets much tricker when $s\geq 2$. For example, it follows from \cite[Corollary 1]{halava2007improved} that testing membership of a given point to $\mathcal{S}$ is algorithmically undecidable already when $s=2, m=2,$ and $n=9$. 
%
Our goal here will be to study tractable outer and inner approximations to $\mathcal{S},$ and to extend some of the statements we proved for R-LD-LPs to this more intricate setting.

 % of these approximations by extending the analogous statements for the case of R-LD-LP at least to some extent.  

\subsection{Outer approximations to $\mathcal{S}$}\label{Outer.approx.ULD}

Let
\begin{equation}\label{eq:Sr.switched}
S_r\mathrel{\mathop{:}}= \bigcap_{k=0}^{r} \{x \in \mathbb{R}^n| ~AGx \leq b, \forall G\in\mathcal{G}^k\}
\end{equation}
denote the set of points that remain in $P$ under all matrix products of length up to $r$. It is clear that these sets provide polyhedral outer approximations to $\mathcal{S}$:
$$\mathcal{S}\subseteq \ldots S_{r+1}\subseteq S_r\subseteq \ldots\subseteq S_2\subseteq S_1\subseteq S_0=P.$$ Hence, by solving LPs that minimize $c^Tx$ over $S_r,$ we obtain a nondecreasing and convergent sequence of lower bounds on the optimal value of an \aaa{R-UTVLD-LP}. We leave it to the reader to check that the statement of Lemma~\ref{lemma:termination} still holds with an almost identical proof. This gives us a way of checking finite termination of convergence of the sets $S_r$ to $\mathcal{S}.$ We now need to generalize the notion of the spectral radius to several matrices.

%With this notation, we leave it to the reader to check that the statements of %Theorem~\ref{thm:S.closed.convex.invariant.non.polyhedral.nphard}, part (i) and Lemma~\ref{lemma:termination} still hold with almost identical proofs. The statements of Theorem~\ref{thm:S.closed.convex.invariant.non.polyhedral.nphard}, part (ii)-(iii), Proposition~\ref{prop:rho>1.convergence.not.finite}, Proposition~\ref{prop:convergence.not.finite.without.origin}, and Proposition~\ref{prop:P.unbounded.not.finite} also hold, as they all involve negative results which are valid even for a special case of R-ULD-LP with $s=1$. To derive further extensions of the results in Section~\ref{sec:R-LD-LP} and Section~\ref{subsec:invariant.ellipsoid.SDP}, we need the notion of the \emph{joint spectral radius} of a set of matrices, which generalizes the concept of a spectral radius of a single matrix.

\begin{definition}[Rota and Strang~\cite{RoSt60}]\label{def:jsr}
Given a set of $n \times n$ matrices $\mathcal{G}=\{G_1,\ldots,G_s\}$, their \emph{joint spectral radius} (JSR) is defined as\footnote{The JSR is independent of the norm used in this definition.} 
$$\rho(\mathcal{G})\mathrel{\mathop{:}}=\lim_{k \rightarrow \infty} \max_{\sigma \in \{1,\ldots,s\}^k} ||G_{\sigma_1}\ldots G_{\sigma_k}||^{1/k}.$$
\end{definition}

The JSR characterizes the maximum growth rate that can be obtained by taking long products out of the matrices $G_1,\ldots, G_s$ in arbitrary order. Note that when $s=1$, it coincides with the spectral radius. This can be seen e.g., via the Gelfand's formula for the spectral radius. 

%For $s\geq 2$ however, computation of the JSR is very challenging task. For example, it is known that the problem of testing whether $\rho\leq 1$ is undecidable for two matrices of size $74\times 47$ *, *. 

%It is clear that a sequence of improving lower bounds on the optimal value of an R-ULD-LP can be obtained by solving LPs that minimize $c^Tx$ over the sets $S_r$ in (\ref{eq:Sr.switched}). The next theorem states that when the JSR is less than one, these lower bounds reach the exact value in finite time. 

We observe that the statements of Propositions~\ref{prop:rho>1.convergence.not.finite},~\ref{prop:convergence.not.finite.without.origin}, and~\ref{prop:P.unbounded.not.finite} are still valid (with $\rho(G)$ replaced with $\rho(\mathcal{G})$), as they even apply to the special case of an \aaa{R-UTVLD-LP} with $s=1$. These propositions, together with the construction in the proof of part (ii) of Theorem~\ref{thm:S.closed.convex.invariant.non.polyhedral.nphard}, demonstrate that none of the three assumptions in the following theorem can be removed.

%The statements of Theorem~\ref{thm:S.closed.convex.invariant.non.polyhedral.nphard}, part (ii), Proposition~\ref{prop:rho>1.convergence.not.finite}, Proposition~\ref{prop:convergence.not.finite.without.origin}, and Proposition~\ref{prop:P.unbounded.not.finite} also hold, as they all involve negative results which are valid even for a special case of R-ULD-LP with $s=1$.

\begin{theorem}\label{thm:finite.switched}
Let $\mathcal{G}=\{G_1,\ldots,G_s\}.$ If $\rho(\mathcal{G})<1,$ $P$ is bounded, and the origin is in the interior of $P$, then $\mathcal S=S_r$ for some integer $r\geq 0.$
\end{theorem}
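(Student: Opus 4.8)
The strategy is to mimic the proof of Theorem~\ref{thm.polyhedrality}, but with the single invariant ellipsoid replaced by a common quadratic Lyapunov-type object for the whole family $\mathcal{G}$. The only place where $s=1$ was used in the R-LD-LP argument was in Step~1, where the Lyapunov equation $G^TMG-M=-I$ produced an invariant ellipsoid together with a uniform contraction factor; everything afterwards (Steps~2--4) only manipulated the resulting ellipsoids $E(\alpha_1)\subseteq P\subseteq E(\alpha_2)$ and the shrinkage factor, and never referred to the map again. So the task reduces to producing, from the hypothesis $\rho(\mathcal{G})<1$, a norm (or a convex body) that contracts uniformly under every $G_i$.

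\textbf{Key steps.} First I would invoke the standard converse-Lyapunov fact for the joint spectral radius: since $\rho(\mathcal{G})<1$, there is a vector norm $\|\cdot\|_*$ on $\mathbb{R}^n$ and a constant $\gamma\in(0,1)$ such that $\|G_i x\|_*\le \gamma\|x\|_*$ for all $i\in\{1,\ldots,s\}$ and all $x\in\mathbb{R}^n$ (this is the extremal-norm / Barabanov-type construction; see also Theorem~\ref{thm:jsr.max.of.quadratics} referenced in the excerpt, which gives such an object as a finite intersection of ellipsoids, or one can simply take $\|x\|_*=\sup_{G\in\mathcal{G}^*}\gamma^{-\ell(G)}\|Gx\|$ for a suitable submultiplicative norm $\|\cdot\|$ and exponent, where $\ell(G)$ is the product length). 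Let $B_*=\{x:\|x\|_*\le 1\}$ be the corresponding unit ball; it is compact, convex, full-dimensional, symmetric, and satisfies $G_iB_* \subseteq \gamma B_*$ for every $i$, hence $GB_*\subseteq\gamma^{\ell(G)}B_*$ for every $G\in\mathcal{G}^*$. Second, scaling $B_*$ gives a nested family of invariant bodies $E(\alpha)=\alpha B_*$, and since $B_*$ is bounded with nonempty interior and $P$ is bounded with the origin in its interior, there exist $0<\alpha_1<\alpha_2$ with $E(\alpha_1)\subseteq P\subseteq E(\alpha_2)$ --- exactly as in Step~2. Third, pick $r$ large enough that $\gamma^{r}\alpha_2\le\alpha_1$, i.e. $r=\lceil \log(\alpha_2/\alpha_1)/\log(1/\gamma)\rceil$; then for any $x\in P\subseteq E(\alpha_2)$ and any product $G\in\mathcal{G}^k$ with $k\ge r$ we get $Gx\in\gamma^k E(\alpha_2)\subseteq\gamma^r E(\alpha_2)\subseteq E(\alpha_1)\subseteq P$. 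Fourth, conclude: if $x\in S_r$, then $Gx\in P$ for all products of length $\le r$ by definition of $S_r$, and for all products of length $>r$ by the contraction argument applied to the point $x$ itself (any product of length $k>r$ factors as a product of length $k$, and $x\in P$ already forces it into $E(\alpha_1)$); hence $x\in\mathcal{S}$, giving $S_r\subseteq\mathcal{S}$, and the reverse inclusion is automatic, so $\mathcal{S}=S_r$.

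\textbf{Main obstacle.} The one genuinely nontrivial ingredient is the existence of the uniformly contracting norm $\|\cdot\|_*$ from $\rho(\mathcal{G})<1$. For the mere \emph{existence} of a finite $r$ (which is all this theorem claims, with no complexity promise), I do not need the norm to be polynomially computable or even semidefinite-representable --- any invariant compact convex body with nonempty interior that contracts uniformly will do, and such a body always exists when $\rho(\mathcal{G})<1$ (e.g., the closed convex hull of $\{\gamma^{-\ell(G)}Gx_0 : G\in\mathcal{G}^*\}$ over a bounded generating set, after symmetrization, is bounded precisely because $\rho(\mathcal{G})/\gamma<1$ controls the growth of long products). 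So the proof should cite Theorem~\ref{thm:jsr.max.of.quadratics} (or the classical extremal-norm results of Barabanov / Rota--Strang) for this object and then run the four scaling steps verbatim. The place to be slightly careful is ensuring the chosen body has the origin in its interior and is bounded simultaneously; both follow from full-dimensionality of $B_*$ and $\rho(\mathcal{G})<1$, so no real difficulty remains once the contracting norm is in hand.
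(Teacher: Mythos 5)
Your proposal is correct and follows essentially the same route as the paper: the paper's proof also extracts a norm $f$ with $f(G_ix)\le\hat\rho f(x)$, $\hat\rho<1$, from $\rho(\mathcal{G})<1$ (citing Rota--Strang and Berger--Wang, i.e., the same extremal-norm fact you invoke), sandwiches $P$ between two sublevel sets of that norm, and chooses $r$ so that every product of length $r$ maps the outer sublevel set into the inner invariant one contained in $P$, hence in $\mathcal{S}$. The only cosmetic difference is that you phrase the contraction via the unit ball $B_*$ and its scalings rather than directly via the norm.
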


\begin{proof}
Let $\hat{\rho}=\frac{\rho(\mathcal{G})+1}{2}<1$. It follows (see, e.g.,~\cite{RoSt60},~\cite[Lemma II]{berger1992bounded}) that there exists a norm $f:\mathbb{R}^n\rightarrow\mathbb{R}$ such that for any $\alpha \geq 0$ and any $x\in\mathbb{R}^n,$ $f(x)\leq\alpha \Rightarrow f(G_ix)\leq \alpha \cdot \hat{\rho}$, $\forall i\in\{1,\ldots,s\}$. As $P$ contains the origin in its interior and is bounded, there exists $\alpha_2>\alpha_1>0$ such that $$\{x \in \mathbb{R}^n|~f(x) \leq \alpha_1\} \subseteq P \subseteq \{x \in \mathbb{R}^n|~f(x)\leq \alpha_2\}.$$ Hence, any point in $P$, once multiplied by any matrix product of length $r=\frac{\log(\alpha_1/\alpha_2)}{\log(\hat{\rho})},$ lands in the set $\{x \in \mathbb{R}^n|~f(x)\leq\alpha_1\}.$ As $\{x \in \mathbb{R}^n|~f(x)\leq~\alpha_1\} \subseteq \mathcal{S}$, the result follows.
\end{proof}

We remark that our proof above did not use the fact that $P$ was a polytope and would hold if $P$ were instead any compact set. The reason this proof was noticeably simpler than that of Theorem~\ref{thm.polyhedrality} is that we did not analyze how large $r$ can be. We did not do so because of two reasons: (i) the sublevel sets of the norm $f$ in the above proof may not be simple sets like ellipsoids
 that are amenable to algorithmic analysis, and (ii) even if $r$ is small, the number of inequalities describing the set $S_r$ can be as large as $\sum_{k=0}^r ms^k$, a quantity which grows very quickly when $s\geq 2$. We empirically observe, however, that the first few levels of this hierarchy often provide high-quality lower bounds on the optimal value of an \aaa{R-UTVLD-LP}. We can check this by computing upper bounds on the optimal value via a procedure that we describe in the next subsection.

Theorem~\ref{thm:finite.switched} as well as some of the theorems in the remainder of this section require the assumption that $\rho(\mathcal{G})<1$. While algorithmic decidability of this condition is currently unknown~\cite{blondel2000survey}, there is a large body of literature on the computation of (arbitrarily tight) upper bounds on the JSR, which can be utilized to verify this assumption; see e.g.~\cite{Raphael_Book},~\cite{parrilo2008approximation},~\cite{blondel2005computationally},~\cite{ahmadi2016lower} and references therein. In fact, we present a hierarchy of SDP-based sufficient conditions for checking this assumption in the next subsection (see Theorem~\ref{thm:jsr.max.of.quadratics}), which happens to also be useful for finding inner approximations to the feasible set of an \aaa{R-UTVLD-LP}.

%\subsection{Computation of optimal sequential inner approximations to $\mathcal{S}$ by semidefinite programming}\label{subsec:R-ULD-LP.SDP.union.ellipsoid}

\subsection{Inner approximations to $\mathcal{S}$}\label{Inner.approx.ULD}

In this subsection, we generalize the results of Section~\ref{subsec:inner.approx.LDLP} to the case of \aaa{R-UTVLD-LPs}. Recall our notation $\mathcal{S}$ from (\ref{eq:feas.set.uld.lp}) for the feasible set of an \aaa{R-UTVLD-LP}, and let us keep our notation $P,\mathcal{G}^k,$ and $S_r$ from the previous subsection. Let $E\subseteq P$ be any convex set that contains the origin in its interior and is invariant under multiplication by $G_1,\ldots,G_s$. Since $E$ is convex, it must also be invariant under the dynamics in (\ref{eq:difference.inclusion}). Define $S_{-1}\mathrel{\mathop{:}}=\R^n$ and for any integer $r\geq 0$, let 
\beqn \label{eq:Ir(E).switched} I_r(E) = {S}_{r-1}\cap\{x\in\R^n\:|\:Gx\in E, \forall G\in\mathcal{G}^r\}.\eeqn
Note that $I_0(E)=E$ by definition.
%Let $I_0(E)\mathrel{\mathop{:}}=E,$ and for an integer $r\geq 1$ define
%\beqn \label{eq:Ir(E).switched} I_r(E) = {S}_{r-1}\cap\{x\in\R^n\:|\:Gx\in E, \forall G\in\mathcal{G}^r\}.\eeqn
With this notation, the reader can verify that Lemma~\ref{lemma:inner.approx}, Lemma~\ref{lemma:inner.approx.limit}, and Corollary~\ref{corollary:inner.approx.finite} extend, with almost identical proofs, to the case where the single matrix $G$ is replaced by the set of matrices $\mathcal{G}=\{G_1,\ldots,G_s\}$. We summarize these results in the next lemma.
%[*AAA to Oktay: please check independently*] 
\begin{lemma}\label{lem:I_rE.properties.switched}
Let $E\subseteq P$ be convex\footnote{We ask that $E$ be convex, so its invariance with respect to $\mathcal{G}$ would imply its invariance with respect to the matrices in $conv(\mathcal{G})$. It is easy to see that in general, if a set $T$ is invariant under $\mathcal{G}$, then $conv(T)$ is invariant under $conv(\mathcal{G})$.} and invariant with respect to $\mathcal{G}=\{G_1,\ldots, G_s\}$. The sets $I_r(E)$ in (\ref{eq:Ir(E).switched}) satisfy the following properties:
$$I_r(E)\subseteq \mathcal{S}, \mbox{ and } I_r(E)\subseteq I_{r+1}(E) \mbox{ for all } r\geq 0.$$
Moreover, if $P$ is bounded, $\rho(\mathcal{G})<1,$ and $E$ contains the origin in its interior, then there exists a nonnegative integer $t$ such that $\mathcal{S}=I_r(E)$ for all $r\geq t$.
\end{lemma}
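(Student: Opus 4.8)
The plan is to mirror the proofs of Lemma~\ref{lemma:inner.approx}, Lemma~\ref{lemma:inner.approx.limit}, and Corollary~\ref{corollary:inner.approx.finite}, replacing the single map $G$ by the finite set $\mathcal{G}=\{G_1,\ldots,G_s\}$ and the ``one application of $G$'' arguments by ``multiplication by an arbitrary product from $\mathcal{G}$''. The key structural observations are the following. First, $\mathcal{G}^{k}\cdot\mathcal{G}^{1}=\mathcal{G}^{k+1}$, so if $Gx\in E$ for all $G\in\mathcal{G}^{r}$ and $E$ is invariant under each $G_i$, then $Gx\in E$ for all $G\in\mathcal{G}^{r+1}$; this replaces the statement ``$G^{r}x\in E\Rightarrow G^{r+1}x\in E$''. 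Second, $x\in I_r(E)$ means $Gx\in P$ for every product $G$ of length $\le r-1$ and $Gx\in E$ for every product $G$ of length exactly $r$; invariance of $E\subseteq P$ then gives $Gx\in E\subseteq P$ for every product of length $\ge r$, hence $Gx\in P$ for all $G\in\mathcal{G}^{*}$, i.e. $x\in\mathcal{S}$. This proves $I_r(E)\subseteq\mathcal{S}$.

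For monotonicity $I_r(E)\subseteq I_{r+1}(E)$: if $x\in I_r(E)$, then $x\in S_{r-1}$ and $Gx\in E\subseteq\mathcal{S}\subseteq P$ for all $G\in\mathcal{G}^r$, so in particular $AGx\le b$ for all $G\in\mathcal{G}^r$, giving $x\in S_r$; and invariance of $E$ under $\mathcal{G}$ upgrades ``$Gx\in E$ for all $G\in\mathcal{G}^r$'' to ``$Gx\in E$ for all $G\in\mathcal{G}^{r+1}$'', so $x\in I_{r+1}(E)$. (The stabilization clause ``$I_r=I_{r+1}\Rightarrow I_k=I_r$ for $k\ge r$'', used in Lemma~\ref{lemma:inner.approx}~(iii), extends by the same contradiction argument: given $x\in I_{r+2}(E)\setminus I_{r+1}(E)$, one picks a length-$(r+1)$ product $G$ with $Gx\notin E$, writes $G=G'G_i$ with $G_i\in\mathcal{G}$, and checks that $y=G_i x$ lies in $I_{r+1}(E)\setminus I_r(E)$; I would include this only if needed for the final claim.)

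For the finite-convergence clause, I would follow Theorem~\ref{thm:finite.switched}: since $\rho(\mathcal{G})<1$, set $\hat\rho=(\rho(\mathcal{G})+1)/2<1$ and invoke the Rota--Strang/Berger--Wang type result to get a norm $f$ on $\R^n$ with $f(x)\le\alpha\Rightarrow f(G_i x)\le\alpha\hat\rho$ for all $i$; boundedness of $P$ and the origin being interior give $\alpha_2>\alpha_1>0$ with $\{f\le\alpha_1\}\subseteq E\subseteq P\subseteq\{f\le\alpha_2\}$ (here I use $E\subseteq P$, $0\in\mathrm{int}(E)$, and $E$ bounded—$E$ is bounded because $E\subseteq P$ and $P$ is bounded). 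Then for $m_E\mathrel{\mathop:}=\lceil\log(\alpha_1/\alpha_2)/\log\hat\rho\rceil$, every product of length $\ge m_E$ maps $P$ into $\{f\le\alpha_1\}\subseteq E$, so for $k\ge m_E$ the set $\{x: Gx\in E\ \forall G\in\mathcal{G}^k\}$ contains $P\supseteq\mathcal{S}$. Combined with Theorem~\ref{thm:finite.switched}, which gives $\mathcal{S}=S_k$ for all $k\ge r^{*}$ for some $r^{*}$, we get that for $t\mathrel{\mathop:}=\max\{m_E,r^{*}\}$ and any $r\ge t$, $I_r(E)=S_{r-1}\cap\{x: Gx\in E\ \forall G\in\mathcal{G}^r\}\supseteq S_{r-1}\cap\mathcal{S}=\mathcal{S}$ (using $S_{r-1}\supseteq\mathcal{S}$), while $I_r(E)\subseteq\mathcal{S}$ always; hence $\mathcal{S}=I_r(E)$.

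I expect no genuine obstacle here: this is a routine ``lift a proof from one matrix to a finite family'' exercise, and every ingredient (the semigroup identity $\mathcal{G}^k\mathcal{G}^1=\mathcal{G}^{k+1}$, the extremal-norm result behind Theorem~\ref{thm:finite.switched}, and the containments $I_r(E)\subseteq\mathcal{S}\subseteq S_{r-1}$) is already available. The one point to state carefully is the role of convexity of $E$: it is needed so that invariance under $\mathcal{G}=\{G_1,\ldots,G_s\}$ implies invariance under $\mathrm{conv}(\mathcal{G})$, matching the dynamics~\eqref{eq:difference.inclusion}; but for the inner-approximation arguments themselves only invariance under the finitely many $G_i$ is used, so the proof goes through verbatim. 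The mild subtlety relative to the $s=1$ case is purely bookkeeping—quantifying over all $G\in\mathcal{G}^k$ rather than the single matrix $G^k$—so I would simply remark that the proofs of Lemma~\ref{lemma:inner.approx}, Lemma~\ref{lemma:inner.approx.limit}, and Corollary~\ref{corollary:inner.approx.finite} carry over with these substitutions and spell out only the step where the extremal norm replaces the Lyapunov ellipsoid.
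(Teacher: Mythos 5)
Your proposal is correct and takes essentially the same route the paper intends: it transplants the proofs of Lemma~\ref{lemma:inner.approx}, Lemma~\ref{lemma:inner.approx.limit}, and Corollary~\ref{corollary:inner.approx.finite} to the switched setting, quantifying over products in $\mathcal{G}^k$ and replacing the Lyapunov ellipsoid by the extremal norm underlying Theorem~\ref{thm:finite.switched}. One minor remark: in your final step the invocation of Theorem~\ref{thm:finite.switched} is actually superfluous, since the containment you use, $S_{r-1}\supseteq\mathcal{S}$, holds unconditionally, so $r\geq m_E$ together with $I_r(E)\subseteq\mathcal{S}$ already gives $\mathcal{S}=I_r(E)$.
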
 

In words, Lemma~\ref{lem:I_rE.properties.switched} states that the sets $I_r(E)$ provide an improving sequence of inner approximations to $\mathcal{S}$ and coincide with $\mathcal{S}$ in finite time.

\subsubsection{Computation of $I_r(E)$}\label{subsubsec:IrE.switched}
The construction of the sets $I_r(E)$ requires access to a convex  invariant set $E\subseteq P$.
% For an R-LD-LP with $\rho(G)<1$, an invariant set for the dynamics that is always guaranteed to exist is an ellipsoid $E=\{x \in \mathbb{R}^n|~x^TMx \leq \alpha\}$. To find the positive definite matrix $M$ (that ensures $G^TMG\preceq M$) and the positive scalar $\alpha$ (that ensures $E\subseteq P$), one can follow the methodology described in steps 1 and 2 of the proof of Theorem~\ref{thm.polyhedrality}. Note that these two steps only involve matrix inversion and basic arithmetic operations. 
A nontrivial challenge here is that unlike the case of a single matrix (Section~\ref{subsubsec:IrE.LDLP}), it is possible to have $\rho(\mathcal{G})<1$ and yet not have an ellipsoid that is invariant under the action of the matrices $G_1,\ldots,G_s.$ For example, the matrices
\begin{equation}\nonumber
G_{1}=\gamma\left[
\begin{array}
[c]{cc}%
1 & 0\\
1 & 0
\end{array}
\right]  ,\text{ }G_{2}=\gamma\left[
\begin{array}
[c]{cr}%
0 & 1\\
0 & -1
\end{array}
\right]
\end{equation}
have JSR less than one for $\gamma\in[0,1),$ but only admit a common invariant ellipsoid for $\gamma\in~[0,\frac{1}{\sqrt{2}}]$~\cite{ando1998simultaneous}. It turns out however, that if the JSR is less than one, then there is always an invariant set which is the intersection of a finite number of ellipsoids. Moreover, these ellipsoids can be found via semidefinite programming.

\begin{theorem}[see Theorem 6.1 and Theorem 2.4 of~\cite{JSR_path.complete_journal}]\label{thm:jsr.max.of.quadratics}
Let $\mathcal{G}=\{G_1,\ldots,G_s\}$ be a set of $n\times n$ matrices. Then, for any integer $l\geq 1$, if $\rho(\mathcal{G})\leq \frac{1}{\sqrt[2l]{n}}$, there exist $s^{l-1}$ real symmetric matrices $H_{\pi}$, where $\pi \in\{1,\ldots,s\}^{l-1}$ is a multi-index, such that
\begin{equation}\label{eq:converse.thm.LMIs}
\begin{array}{rll}
H_{\pi}&\succ& 0\ \ \ \  \forall \pi \in\{1,\ldots,s\}^{l-1},\\
G_j^TH_{i \sigma}G_j&\preceq&H_{\sigma j}, ~\forall \sigma \in\{1,\ldots,s\}^{l-2},~ \forall i,j \in\{1,\ldots,s\}.
\end{array}
\end{equation}
Conversely, existence of a set of symmetric matrices $H_\pi$ that satisfy the semidefinite constraints in (\ref{eq:converse.thm.LMIs}) strictly\footnote{If $\rho(\mathcal{G})< \frac{1}{\sqrt[2l]{n}}$, the constraints in (\ref{eq:converse.thm.LMIs}) will indeed be strictly feasible as one can apply the first part of this theorem to $\beta\mathcal{G}\mathrel{\mathop:}=\{\beta G_1,\ldots,\beta G_s  \}$ for $\beta>1$ and small enough. (Note that the JSR is a continuous function of the entries of $\mathcal{G}$~\cite{Raphael_Book} and satisfies the homogeneity relation $\rho(\beta\mathcal{G})=\beta\rho(\mathcal{G})$.)} implies that $\rho(\mathcal{G})<1.$ Moreover, if (\ref{eq:converse.thm.LMIs}) is satisfied, then for any scalar $\alpha\geq 0$, the set 
\begin{equation}\label{eq:def.F.alpha}
\mathcal{F}_\alpha\mathrel{\mathop:}=\big\{x\in\mathbb{R}^n|~ x^T H_{\pi} x\leq \alpha, \forall  \pi \in\{1,\ldots,s\}^{l-1}    \big\} 
\end{equation}
is invariant under multiplication by $G_1,\ldots,G_s$.
\end{theorem}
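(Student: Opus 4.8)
The plan is to split the statement into its three assertions and handle them in increasing order of difficulty. The invariance of $\mathcal{F}_\alpha$ and the ``soundness'' direction---feasibility of the linear matrix inequalities in (\ref{eq:converse.thm.LMIs}) implies $\rho(\mathcal{G})<1$---follow directly from the inequalities themselves, while the ``completeness'' direction---the bound $\rho(\mathcal{G})\le 1/\sqrt[2l]{n}$ implies feasibility---carries the real weight. For invariance, I would take $x\in\mathcal{F}_\alpha$, $j\in\{1,\dots,s\}$, and an arbitrary $\pi'\in\{1,\dots,s\}^{l-1}$, written as $\pi'=i\sigma$ with $i\in\{1,\dots,s\}$ and $\sigma\in\{1,\dots,s\}^{l-2}$; then $(G_jx)^TH_{\pi'}(G_jx)=x^TG_j^TH_{i\sigma}G_jx\le x^TH_{\sigma j}x\le\alpha$, using the inequality $G_j^TH_{i\sigma}G_j\preceq H_{\sigma j}$ and $x\in\mathcal{F}_\alpha$ (note $\sigma j$ has length $l-1$). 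Since $\pi'$ was arbitrary, $G_jx\in\mathcal{F}_\alpha$, and invariance under $\mathrm{conv}(\mathcal{G})$ then follows from convexity of $\mathcal{F}_\alpha$.

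For soundness, assume the $H_\pi\succ0$ satisfy (\ref{eq:converse.thm.LMIs}) strictly. Because there are only finitely many inequalities and finitely many fixed matrices $H_\pi$, there exist $\gamma\in(0,1)$ with $G_j^TH_{i\sigma}G_j\preceq\gamma^2H_{\sigma j}$ for all $i,j,\sigma$, and constants $0<c_1\le c_2$ with $c_1I\preceq H_\pi\preceq c_2I$. Given a switching word $w_1\cdots w_k$, I would telescope these inequalities along the path in the de Bruijn graph on words of length $l-1$ that reads $w_1,\dots,w_k$: each use of an inequality advances the window $\pi$ by deleting its first letter and appending the current $w_t$---the free choice of that first letter being exactly what makes the graph path-complete---and contributes a factor $\gamma^2$, yielding $(G_{w_k}\cdots G_{w_1})^TH_{\pi^{(k)}}(G_{w_k}\cdots G_{w_1})\preceq\gamma^{2k}H_{\pi^{(0)}}$. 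Combined with $c_1I\preceq H_\pi\preceq c_2I$ this gives $\|G_{w_k}\cdots G_{w_1}\|\le\sqrt{c_2/c_1}\,\gamma^{k}$ for every word, hence $\rho(\mathcal{G})\le\gamma<1$.

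For completeness, I would use the standard identity $\rho(\mathcal{G}^l)=\rho(\mathcal{G})^l$, so the hypothesis becomes $\rho(\mathcal{G}^l)\le n^{-1/2}$, where $\mathcal{G}^l$ is the set of all $s^l$ products of length $l$. By the classical fact that the common-quadratic (ellipsoidal) relaxation of the joint spectral radius is accurate within a factor $\sqrt{n}$---that is, $\rho(\mathcal{H})\le\hat\rho_{\mathrm{CQLF}}(\mathcal{H})\le\sqrt{n}\,\rho(\mathcal{H})$ for every finite matrix set $\mathcal{H}$, a consequence of John's ellipsoid theorem (see~\cite{ando1998simultaneous},~\cite{Raphael_Book})---there exists a single $\bar Q\succ0$ with $B^T\bar QB\preceq\bar Q$ for every $B\in\mathcal{G}^l$ (with a strict margin when $\rho(\mathcal{G})<1/\sqrt[2l]{n}$). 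The remaining task is to manufacture the $s^{l-1}$ coupled matrices $H_\pi$ out of this single $\bar Q$: one takes each $H_\pi$ to be a finite sum of quadratic forms obtained by chaining $\bar Q$ with up to $l-1$ of the matrices $G_{\pi_t}$ appearing in the window $\pi$ (together with short completions), arranged so that advancing the window by one step in the de Bruijn graph---i.e.\ checking one inequality $G_j^TH_{i\sigma}G_j\preceq H_{\sigma j}$---reduces term by term to a single application of $B^T\bar QB\preceq\bar Q$ for a length-$l$ product $B$. The exponent $1/\sqrt[2l]{n}=(\sqrt{n})^{-1/l}$ is precisely the $l$-th root of the John factor, incurred once per block of $l$ steps rather than once per step.

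I expect this last step---pinning down the exact recipe for the $H_\pi$ (which completions, which weights) so that every one of the coupled inequalities closes using only the $l$-step invariance of $\bar Q$---to be the main obstacle. A secondary technical point is the boundary case $\rho(\mathcal{G})=1/\sqrt[2l]{n}$, where the above construction only gives feasibility in the limit: I would apply the strict case to the rescaled family $(1+\varepsilon)^{-1}\mathcal{G}$, normalize the resulting certificates (e.g.\ by $\sum_\pi\mathrm{tr}(H_\pi)=1$), and pass to a subsequential limit as $\varepsilon\downarrow0$; keeping the limiting matrices positive definite rather than merely semidefinite requires first reducing to the case where $\mathcal{G}$ has no nontrivial common invariant subspace (and inducting on the dimension otherwise, since a common invariant subspace splits the joint spectral radius across blocks of strictly smaller size).
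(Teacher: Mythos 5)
The first thing to note is that the paper does not actually prove the two JSR assertions: its proof consists of a citation to \cite{JSR_path.complete_journal} for those parts, plus a self-contained argument only for the invariance of $\mathcal{F}_\alpha$. Your invariance argument is correct and is essentially the paper's argument (you check $(G_jx)^TH_{i\sigma}(G_jx)=x^TG_j^TH_{i\sigma}G_jx\le x^TH_{\sigma j}x\le\alpha$ directly, where the paper routes the same inequality through the max-of-quadratics function $W$). Your ``soundness'' telescoping argument (strict feasibility $\Rightarrow\rho(\mathcal{G})<1$) is also the standard path-complete chaining and is fine in substance, though your description of the window update is reversed: as the constraint $G_j^TH_{i\sigma}G_j\preceq H_{\sigma j}$ is written, it is the \emph{last} letter of the window that is consumed (and must equal the label $j$ of the matrix just applied), while a \emph{free} letter $i$ is prepended; so the window should be read as the next $l-1$ letters of the switching word, with the free letter used to extend the horizon. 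Once the convention is fixed the geometric decay $\gamma^{2k}$ and the bound $\rho(\mathcal{G})\le\gamma<1$ go through as you say.

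The genuine gap is in the ``completeness'' direction, which is the only mathematically heavy part of the statement and which you explicitly leave unresolved: you correctly reduce, via $\rho(\mathcal{G}^l)=\rho(\mathcal{G})^l$ and the $\sqrt{n}$ John-ellipsoid bound on the common-quadratic relaxation, to a single $\bar Q\succ0$ with $B^T\bar QB\preceq\bar Q$ for all $B\in\mathcal{G}^l$, but you never give the recipe that turns $\bar Q$ into the coupled family $\{H_\pi\}$, and without it the proof is incomplete. The recipe is in fact clean: for $\pi=(p_1,\dots,p_{l-1})$ set $F_0(\pi)=I$ and $F_k(\pi)=G_{p_{l-k}}G_{p_{l-k+1}}\cdots G_{p_{l-1}}$ for $k=1,\dots,l-1$ (the partial products along the ``future'' window, next matrix last), and define $H_\pi=\sum_{k=0}^{l-1}F_k(\pi)^T\bar QF_k(\pi)$, with uniform weights and no extra completions. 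Then $H_\pi\succeq\bar Q\succ0$, and since $F_k(i\sigma)G_j=F_{k+1}(\sigma j)$ for $k=0,\dots,l-2$, every term of $G_j^TH_{i\sigma}G_j$ except the top one reappears verbatim in $H_{\sigma j}$, so the coupled inequality collapses to a single instance of $B^T\bar QB\preceq\bar Q$ with $B=G_iF_{l-1}(\sigma j)$ a product of length exactly $l$. Also be aware that the boundary case $\rho(\mathcal{G})=n^{-1/(2l)}$ is not a mere technicality: the $\sqrt n$ bound only yields $\bar Q$ with $B^T\bar QB\preceq(1+\epsilon)\bar Q$, so your proposed limiting argument with a reduction to families having no common invariant subspace (or the treatment in the cited reference) is genuinely needed there. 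In short, you prove correctly the one part the paper proves, give a valid argument for one part the paper outsources, and leave open---as an acknowledged obstacle---the key construction that the paper also delegates to \cite{JSR_path.complete_journal}.
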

\begin{remark}\label{rmk:notation}
By convention, when $l=1$, the decision variable in (\ref{eq:converse.thm.LMIs}) is just a single matrix $H$ and the constraints in (\ref{eq:converse.thm.LMIs}) should read $$H\succ 0, \quad  G_j^THG_j \preceq H, \forall j\in \{1,\ldots,s\}.$$ In the case where $l=2$, one should solve (\ref{eq:converse.thm.LMIs}) with the convention that $\{1,\ldots,s\}^0$ is the empty set. This means that the decision variables are $H_1,\ldots, H_s$ and the constraints are 
$$H_1\succ 0,\ldots,H_s\succ 0, \quad  G_j^TH_iG_j \preceq H_j, \forall i,j\in \{1,\ldots,s\}.$$
\end{remark}

\begin{proof}[Proof of Theorem~\ref{thm:jsr.max.of.quadratics}]
The proof of this theorem appears in \cite{JSR_path.complete_journal}, except for the part about invariance of the sets $\mathcal{F}_{\alpha},$ which we include here for completeness. We need to show that the constraints in (\ref{eq:converse.thm.LMIs}) imply $$x \in \mathcal{F}_{\alpha} \Rightarrow G_jx \in \mathcal{F}_{\alpha}, \forall j=1,\ldots,s.$$ Let $\bar{x} \in \mathcal{F}_{\alpha}$ and define a function $W:\mathbb{R}^n\rightarrow\mathbb{R}$ as $$W(x)\mathrel{\mathop{:}}=\max_{\pi \in \{1,\ldots,s\}^{l-1}} \{x^TH_\pi x\}.$$ 
By definition of $\mathcal{F}_{\alpha}$, $W(\bar{x}) \leq \alpha$. Furthermore, from the second set of inequalities in (\ref{eq:converse.thm.LMIs}), it is easy to see that $W(G_jx) \leq W(x), \forall j=1,\ldots,s$ and $x \in \mathbb{R}^n.$ Indeed, (\ref{eq:converse.thm.LMIs}) implies that $\forall \sigma \in \{1,\ldots,s\}^{l-2}$, $\forall i, j \in \{1,\ldots,s\}$ and $\forall x \in \mathbb{R}^n$, 
$$x^TG_j^TH_{i\sigma}G_jx\leq \max_{\hat{\sigma} \in \{1,\ldots,s\}^{l-2}, \hat{j} \in \{1,\ldots,s\}} x^TH_{\hat{\sigma} \hat{j}}x=W(x).$$
%This in turn implies that $W(G_jx) \leq W(x)$, for $j=1,\ldots,s.$
%Combining this latter fact with the fact that $W(\bar{x})\leq \alpha$,
We hence deduce that $W(G_j \bar{x}) \leq W(\bar{x})\leq \alpha,$ for $j=1,\ldots,s,$ and so $G_j\bar{x} \in \mathcal{F}_{\alpha}$ for $j=1,\ldots,s.$
\end{proof}

Going back to the computation of the convex invariant set $E \subseteq P$, which is needed for the construction of the inner approximations $I_r(E)$ in (\ref{eq:Ir(E).switched}), we first find the smallest integer $l \geq 1$ for which the SDP in (\ref{eq:converse.thm.LMIs}) is feasible. (Note that we never need to compute the JSR.) Once this is done, for any fixed $\alpha\geq 0$, the set $\mathcal{F}_{\alpha}$ in (\ref{eq:def.F.alpha}) provides us with a convex and invariant set. We now need to find a small enough  $\bar{\alpha}>0$ such that $\mathcal{F}_{\bar{\alpha}} \subseteq P$. A simple way of doing this is to require that one ellipsoid, say the first, be in the polytope. With this approach, $\bar{\alpha}$ can be computed by following the procedure described in Step 2 of the proof of Theorem \ref{thm.polyhedrality}, which only requires matrix inversion. %We remark that including all ellipsoids in the polytope is of course too strong a constraint to ensure that $\mathcal{F}_{\alpha} \subseteq P$; we refer the reader to Section \ref{subsubsec:IrEr.switched} for more optimal ways of doing this.

With $\bar{\alpha}$ and the matrices $\{H_{\pi}\}$ fixed, consider the following sequence of convex quadratic programs,
\begin{flalign} \label{eq:QP.switched}
\underset{x\in\mathbb{R}^n}{\text{minimize}} \hspace*{1cm} & c^Tx \nonumber \\
\text{s.t.} \hspace*{1cm} &  (Gx)^TH_{\pi}(G x) \leq \bar{\alpha}, \forall G \in \mathcal{G}^r, \forall \pi \in \{1,\ldots,s\}^{l-1}  \\ \nonumber
\ & AGx \leq 1, \forall G \in \bigcup_{k=0}^{r-1} \mathcal{G}^k,   \nonumber
\end{flalign}
indexed by an integer $r\geq 0$.
The feasible sets of these optimization problems are the sets $I_r(E)$ as defined in (\ref{eq:Ir(E).switched}) with $E=\mathcal{F}_{\bar{\alpha}}$. As $I_r(E)\subseteq \mathcal{S}$ for all $r\geq 0$, the optimal values of these convex quadratic programs are upper bounds on the optimal value of the \aaa{R-UTVLD-LP}. Lemma~\ref{lem:I_rE.properties.switched} further implies that these upper bounds monotonically improve with $r$ and reach the optimal value of the \aaa{R-UTVLD-LP} in a finite number of steps. While this approach already achieves finite convergence, there is much room for improvement as the invariant set $E$ is fixed throughout the iterations and is designed without taking into consideration the objective function.

\subsubsection{Computation of improved inner approximations}\label{subsubsec:IrEr.switched} 

Our goal now is to find invariant sets $E_r$ that result in the sets $I_r(E_r)$ in (\ref{eq:Ir(E).switched}) that best approximate the feasible $\mathcal{S}$ of an \aaa{R-UTVLD-LP} in the direction of its objective function. To do this, we first find the smallest integer $l$ for which the SDP in (\ref{eq:converse.thm.LMIs}) is feasible. We fix this number $l$ throughout. Our sets $E_r$, for $r=0,1,\ldots$, will then be given by 
\begin{align}\label{eq:def.Er.switched}
E_r=\big\{z\in\mathbb{R}^n|~ z^T H_{\pi ,r} z\leq 1, \forall  \pi \in\{1,\ldots,s\}^{l-1}    \big\}, 
\end{align}
where the symmetric matrices $H_{\pi,r}$ are optimal solutions to the following optimization problem: 

\begin{flalign} %\label{inner.ellipse.nonconvex.formulation.switched}
\underset{x\in\mathbb{R}^n, H_{\pi} \in S^{n\times n}}{\text{minimize}} \hspace{5mm} & c^Tx \label{eq:nncvx.opt.switched} \\
\text{s.t.} \hspace*{1cm} & H_{\pi }\succ 0, \forall \pi\in\{1,\ldots,s\}^{l-1},\nonumber \\
\ & G_j^TH_{i \sigma}G_j \preceq H_{\sigma j},
 \forall  \sigma \in\{1,\ldots,s\}^{l-2}, \forall i, j \in\{1,\ldots,s\}, \label{eq:nncvx.opt.switched2} \\
\ & \forall z\in \mathbb{R}^n,~ z^TH_{1\ldots 1}z\leq 1\Rightarrow Az\leq 1, \label{eq:nncvx.opt.switched3} \\
\ & (Gx)^TH_{\pi}(Gx)\leq 1, \forall \pi \in\{1,\ldots,s\}^{l-1}, \forall G \in \mathcal{G}^r, \label{eq:nncvx.opt.switched4} \\ 
\ & AG x\leq 1, \forall G \in \bigcup_{k=0}^{r-1} \mathcal{G} ^k.  \label{eq:nncvx.opt.switched5}
\end{flalign}
Our Remark~\ref{rmk:notation} regarding notation still applies here. Note that constraint (\ref{eq:nncvx.opt.switched2}) imposes that the set $E_r$ in (\ref{eq:def.Er.switched}) be invariant under the dynamics in (\ref{eq:difference.inclusion}). Constraint (\ref{eq:nncvx.opt.switched3}) forces one of the ellipsoids to be within the polytope, which implies that the intersection $E_r$ of all ellipsoids will be in the polytope (this is obviously only a sufficient condition for $E_r \subseteq P$). We remark here that choosing $H_{1\ldots 1}$ to feature in this constraint is without loss of generality; as $H_{1\ldots 1}$ is a variable of the problem, the optimization problem will naturally pick the ``best'' ellipsoid to constrain to be in the polytope. Constraints (\ref{eq:nncvx.opt.switched4}) and (\ref{eq:nncvx.opt.switched5}) force the point $x$ to land in $E_r$ under all products of length $r$ without leaving $P$ before time $r$.

Once this optimization problem is solved to obtain an optimal solution $x_r$ and $\{H_{\pi ,r}\}$, our inner approximation to $\mathcal{S}$ at step $r$ will be the set
\begin{equation}\label{eq:I_r(E_r).switched}
\begin{aligned}
I_r(E_r)=\Big\{z\in\mathbb{R}^n|~\ (Gz)^TH_{\pi,r}(Gz)\leq 1, \forall \pi \in \{1,\ldots,s\}^{l-1}, \forall G \in \mathcal{G}^r, 
AG z\leq 1, \forall G \in \bigcup_{k=0}^{r-1} \mathcal{G}^k\Big\},
\end{aligned}
\end{equation}
and $x_r$ will serve as our candidate suboptimal solution to \aaa{R-UTVLD-LP}. Just as we did in Section~\ref{subsubsec:invariant.ellipsoid.SDP}, we next show that by a reparameterization, the above optimization problem can be cast as an SDP.
%that invokes ideas from polar duality of convex sets

\begin{theorem}\label{thm:inner.SDP.switched}
Suppose $\rho(\mathcal{G})<1$ and the set $P=\{x\in\mathbb{R}^n|\ Ax\leq 1 \}$ is bounded. Let $a_i$ denote the transpose of the $i$-th row of the matrix $A\in\mathbb{R}^{m\times n}$ and consider the following semidefinite program:
	\begin{flalign} \label{eq:inner.ellipse.sdp.switched}
	\underset{x\in\mathbb{R}^n, Q_{\pi} \in S^{n\times n}}{\text{\emph{minimize}}} \hspace*{1cm} & c^Tx \nonumber \\
	\text{\emph{s.t.}} \hspace*{1cm} &  Q_{\pi}\succ 0, \forall \pi \in\{1,\ldots,s\}^{l-1}, \\ \nonumber
 &	G_jQ_{\sigma j}G_j^T \preceq Q_{i \sigma}, \forall \sigma \in\{1,\ldots,s\}^{l-2}, \forall i,j\in\{1,\ldots,s\}, \nonumber \\
	\ & a_j^TQ_{1\ldots 1}a_j\leq 1, \forall j\in \{1,\ldots,m\} \nonumber \\ \nonumber
	\ &  \begin{pmatrix}
	Q_{\pi } &Gx \\ (Gx)^T &1
	\end{pmatrix}\succeq 0, \forall G \in \mathcal{G}^r, \forall \pi \in\{1,\ldots,s\}^{l-1},\\
	\ & AG x\leq 1, \forall G \in \bigcup_{k=0}^{r-1} \mathcal{G}^k .   \nonumber
	\end{flalign}
	%{
	%\begin{equation}\label{eq:sdp_convex}
	%\begin{array}{ll}
	%\min_{x,Q} \{c^Tx:&\ \\
	%Q\succ 0, GQG^T\preceq Q, \begin{pmatrix}
	%Q &x \\ x^T &1
	%\end{pmatrix}\succeq 0, a_i^TQa_i\leq 1, i=1,\ldots,m \},&\ 
	%\end{array}
	%\end{equation}
	%}
	%where $a_i$ is the transpose of the $i$-th row of $A$.
	Then, 
	\begin{enumerate}[(i)]
		\item the optimal values of problems (\ref{eq:nncvx.opt.switched}) and (\ref{eq:inner.ellipse.sdp.switched}) are the same, the optimal vectors $x_r$ in the two problems are the same, and the optimal matrices $H_{\pi, r}$ and $Q_{\pi, r}$ are related via $Q_{\pi, r}=H_{\pi, r}^{-1}$. 
		\item the optimal values of the SDPs in (\ref{eq:inner.ellipse.sdp.switched}) provide upper bounds on the optimal value of the \aaa{R-UTVLD-LP}, are nonincreasing with $r$, and reach the optimal value of the \aaa{R-UTVLD-LP} in a finite number of steps $\bar{r}$. Moreover, any optimal solution $x_r$ to the SDP in (\ref{eq:inner.ellipse.sdp.switched}) with $r\geq \bar{r}$ is an optimal solution to \aaa{R-UTVLD-LP}. 	
	\end{enumerate}
\end{theorem}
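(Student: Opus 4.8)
The plan is to follow the proof of Theorem~\ref{thm:inner.SDP} almost verbatim, replacing the single Lyapunov matrix $M$ by the family $\{H_\pi\}$ supplied by Theorem~\ref{thm:jsr.max.of.quadratics}, and the quadratic form $x^TMx$ by the Lyapunov-type function $W(x)=\max_{\pi}x^TH_\pi x$ used in the proof of that theorem; note that the invariant set $\mathcal{F}_\alpha$ of \eqref{eq:def.F.alpha} is precisely the sublevel set $\{x:W(x)\le\alpha\}$. Throughout, $l$ is the (fixed) integer for which \eqref{eq:converse.thm.LMIs} is solved, and $E_r=\{z:z^TH_{\pi,r}z\le 1\ \forall\pi\}$, $I_r(E_r)$ are as in \eqref{eq:def.Er.switched} and \eqref{eq:I_r(E_r).switched}.

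For part (i), I would show that $(x,\{H_\pi\})$ is feasible for \eqref{eq:nncvx.opt.switched} if and only if $(x,\{H_\pi^{-1}\})$ is feasible for \eqref{eq:inner.ellipse.sdp.switched}; since both problems have the same linear objective $c^Tx$ and $x$ is untouched by the change of variables, this yields equality of the optimal values, equality of the optimal vectors $x_r$, and the relation $Q_{\pi,r}=H_{\pi,r}^{-1}$. The equivalence is checked constraint by constraint, exactly as in Theorem~\ref{thm:inner.SDP} but now with multi-indices: $H_\pi\succ0\Leftrightarrow H_\pi^{-1}\succ0$; two applications of the Schur complement (using $H_{i\sigma}\succ0$ and $H_{\sigma j}\succ0$) turn $G_j^TH_{i\sigma}G_j\preceq H_{\sigma j}$ into $G_jH_{\sigma j}^{-1}G_j^T\preceq H_{i\sigma}^{-1}$, which is the invariance LMI of \eqref{eq:inner.ellipse.sdp.switched} in the $Q$-variables; one more Schur complement turns $(G x)^TH_\pi(G x)\le1$ into the $2\times2$ block LMIs of \eqref{eq:inner.ellipse.sdp.switched}; and the implication in \eqref{eq:nncvx.opt.switched3} says $\{z:z^TH_{1\ldots1}z\le1\}\subseteq P$, which by polar duality of convex sets with the origin in their interior (so $P^\circ=\mathrm{conv}\{a_1,\ldots,a_m\}$ and the polar of $\{z:z^TH_{1\ldots1}z\le1\}$ is $\{y:y^TH_{1\ldots1}^{-1}y\le1\}$) is equivalent to $a_j^TH_{1\ldots1}^{-1}a_j\le1$ for all $j$. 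The constraints $AGx\le1$ over $\cup_{k=0}^{r-1}\mathcal{G}^k$ are literally the same in both formulations.

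For part (ii), the upper-bound and monotonicity statements follow from Lemma~\ref{lem:I_rE.properties.switched}: the SDP constrains $x_r$ to lie in $I_r(E_r)$, where $E_r$ is convex, contained in $P$ (third constraint, via polar duality), and invariant under $G_1,\ldots,G_s$ (invariance LMIs plus Theorem~\ref{thm:jsr.max.of.quadratics}), so $I_r(E_r)\subseteq\mathcal{S}$; and a feasible pair $(x,\{Q_\pi\})$ at level $r$ is feasible at level $r+1$ because any $G'\in\mathcal{G}^{r+1}$ factors as $G_jG$ with $G\in\mathcal{G}^r$, whence $Gx\in E_r$ gives $G'x=G_j(Gx)\in E_r$ by invariance, and $Gx\in E_r\subseteq P$ supplies the extra inequality $AGx\le1$. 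For finite termination I would fix $l$ large enough that $\rho(\mathcal{G})<n^{-1/(2l)}$ (possible since $\rho(\mathcal{G})<1$) and $\beta>1$ with $\beta\rho(\mathcal{G})<n^{-1/(2l)}$; applying Theorem~\ref{thm:jsr.max.of.quadratics} to $\beta\mathcal{G}$ yields matrices $\{H_\pi\}$ with $\beta^2G_j^TH_{i\sigma}G_j\preceq H_{\sigma j}$, hence $G_j^TH_{i\sigma}G_j\preceq\beta^{-2}H_{\sigma j}$ and $W(G_jx)\le\beta^{-2}W(x)$, so $W$ contracts by the factor $\beta^{-2}<1$ at each step. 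Choosing $\alpha_1>0$ with $\{z:z^TH_{1\ldots1}z\le\alpha_1\}\subseteq P$ (Step~2 of the proof of Theorem~\ref{thm.polyhedrality} applied to $H_{1\ldots1}$) and $\alpha_2>0$ with $W(z)\le\alpha_2$ for all $z\in P$ (possible as $P$ is bounded), and setting $\bar r=\lceil\log(\alpha_2/\alpha_1)/\log(\beta^2)\rceil$, one checks that for an optimal solution $\bar x$ of the R-ULD-LP the pair $(\bar x,\{H_\pi/\alpha_1\})$ satisfies every constraint of \eqref{eq:nncvx.opt.switched} at level $\bar r$: invariance uses $\beta^{-2}\le1$; \eqref{eq:nncvx.opt.switched3} uses the choice of $\alpha_1$; \eqref{eq:nncvx.opt.switched5} uses $\bar x\in\mathcal{S}$; and for $G\in\mathcal{G}^{\bar r}$ one has $W(G\bar x)\le\beta^{-2\bar r}W(\bar x)\le\beta^{-2\bar r}\alpha_2\le\alpha_1$, which is $(G\bar x)^T(H_\pi/\alpha_1)(G\bar x)\le1$ for all $\pi$. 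By part (i), $(\bar x,\{\alpha_1H_\pi^{-1}\})$ is then feasible for \eqref{eq:inner.ellipse.sdp.switched} at level $\bar r$, so the SDP optimum is at most $c^T\bar x$; the reverse inequality holds since $x_{\bar r}\in I_{\bar r}(E_{\bar r})\subseteq\mathcal{S}$, giving equality at $r=\bar r$ and, by monotonicity, for all $r\ge\bar r$; any optimal $x_r$ with $r\ge\bar r$ then attains the R-ULD-LP optimum and lies in $\mathcal{S}$, so it is optimal for the R-ULD-LP.

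The step I expect to be the main obstacle is the finite-termination argument. Unlike the single-matrix case there need not be a single invariant ellipsoid, so one must work with the intersection $\mathcal{F}_\alpha$ and the non-smooth function $W$, and keep careful track of the fact that constraint \eqref{eq:nncvx.opt.switched3} forces only one of the ellipsoids into $P$; this is why $\alpha_1$ is extracted from $H_{1\ldots1}$ alone while the contraction bound $\alpha_2$ must control all the $H_\pi$ simultaneously through $W$, and why passing to $\beta\mathcal{G}$ with $\beta>1$ (and $l$ correspondingly large) is needed to produce a strict contraction factor. Unlike Theorem~\ref{thm.polyhedrality}, no attempt is made to bound the size of $\bar r$, consistent with the remark after Theorem~\ref{thm:finite.switched}: the number $s^{l-1}$ of matrices $H_\pi$ and the number of inequalities describing $S_r$ both grow too quickly for a polynomial-time claim.
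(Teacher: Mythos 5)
Your treatment of part (i), of the upper-bound claim, and of monotonicity in $r$ matches the paper's argument (Schur complements, polar duality, and the factorization $G'=G_jG$ for $G'\in\mathcal{G}^{r+1}$), and those steps are fine. The gap is in your finite-termination argument. You ``fix $l$ large enough that $\rho(\mathcal{G})<n^{-1/(2l)}$'' so that Theorem~\ref{thm:jsr.max.of.quadratics} applied to $\beta\mathcal{G}$ yields a family $\{H_\pi\}$ with the strict contraction $W(G_jx)\le\beta^{-2}W(x)$. But in Theorem~\ref{thm:inner.SDP.switched} the integer $l$ is not yours to choose: it is fixed beforehand (in Section~\ref{subsubsec:IrEr.switched}) as the \emph{smallest} integer for which (\ref{eq:converse.thm.LMIs}) is feasible, and the variables of (\ref{eq:nncvx.opt.switched}) and (\ref{eq:inner.ellipse.sdp.switched}) are indexed by $\{1,\ldots,s\}^{l-1}$ for that $l$. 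Since the bound $\rho(\mathcal{G})\le n^{-1/(2l)}$ is only a sufficient condition for feasibility, the fixed $l$ can be strictly smaller than the $l$ your argument needs (e.g., a pair admitting a common quadratic Lyapunov function has $l=1$ feasible even when $\rho(\mathcal{G})>n^{-1/2}$; the Ando--Shih example in Section~\ref{subsubsec:IrE.switched} sits exactly at this boundary for $n=2$). Your candidate pair $(\bar x,\{H_\pi/\alpha_1\})$ then lives at the wrong index level and cannot be substituted into the SDP of the theorem; worse, at the fixed $l$ a \emph{strictly} contracting family need not exist at all (only non-strict invariance is guaranteed), so the $W$-contraction estimate and the explicit $\bar r=\lceil\log(\alpha_2/\alpha_1)/\log(\beta^2)\rceil$ cannot be salvaged at that level.

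The paper avoids any contraction of the $H_\pi$ family. It takes $E_0$, the invariant set (\ref{eq:def.Er.switched}) coming from an optimal solution of the SDP at $r=0$ (or, equivalently, any feasible family at the fixed $l$): this set is convex, invariant, contained in $P$ by the third constraint, and contains the origin in its interior since each $H_{\pi,0}\succ 0$. Lemma~\ref{lem:I_rE.properties.switched} --- whose finite-convergence part rests on the extremal norm furnished by $\rho(\mathcal{G})<1$, as in Theorem~\ref{thm:finite.switched}, not on any contraction of $W$ --- then gives $\mathcal{S}=I_r(E_0)$ for all $r\ge\bar r$. Since fixing $H_\pi=H_{\pi,0}$ in (\ref{eq:inner.ellipse.sdp.switched}) recovers the convex program (\ref{eq:QP.switched}) whose value equals the R-ULD-LP optimum for $r\ge\bar r$, and this value upper bounds the SDP value, finite termination follows without estimating $\bar r$ (no polynomial-time claim is made, consistent with the remark after Theorem~\ref{thm:finite.switched}). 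Your proof is repaired by replacing the contraction step with exactly this appeal to Lemma~\ref{lem:I_rE.properties.switched} applied to an invariant set built from a feasible family at the \emph{given} $l$; your choices of $\alpha_1$ (via Step 2 of Theorem~\ref{thm.polyhedrality} applied to $H_{1\ldots1}$) and the final sandwiching argument can be kept as they are.
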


\begin{proof} 
The proof of part (i) uses the same exact ideas as the proof of part (i) of Theorem~\ref{thm:inner.SDP} (Schur complements and polar duality of polytopes and ellipsoids) and is left to the reader. In particular, this proof would use Schur complements to show that $$G_j^TH_{i \sigma}G_j \preceq H_{\sigma j} \iff G_j H^{-1}_{\sigma j}G_j^T \preceq H^{-1}_{i \sigma}.$$
We now prove part (ii). The statement that the optimal value of (\ref{eq:inner.ellipse.sdp.switched}) is an upper bound on the optimal value of the R-LD-LP follows from the fact that in view of part (i), the last two sets of constraints of this SDP are constraining the optimal solution $x_r$ to be in $I_r(E_r)$, as defined in (\ref{eq:I_r(E_r).switched}). We know that $I_r(E_r)\subseteq\mathcal{S}, \forall r\geq 0$ as points in $I_r(E)$ land in the invariant set $E_r\subseteq P$ (cf. (\ref{eq:def.Er.switched})) in $r$ steps without leaving $P$ before time $r$. To see the claim about the monotonic improvement of our upper bounds, observe that if $x,\{Q_{\pi} \}$ are feasible to the SDP in (\ref{eq:inner.ellipse.sdp.switched}) at level $r$, then they are also feasible to the SDP at level $r+1$. This is because we have the inclusion $$E\mathrel{\mathop:}=\big\{z\in\mathbb{R}^n|~ z^T Q^{-1}_{\pi} z\leq 1, \forall  \pi \in\{1,\ldots,s\}^{l-1}    \big\} \subseteq P,$$ by the third set of constraints in (\ref{eq:inner.ellipse.sdp.switched}), and the implication $$Gx\in E,\forall G\in\mathcal{G}^r\Rightarrow Gx\in E, \forall G\in\mathcal{G}^{r+1}$$ by invariance of $E$ under $\{G_1,\ldots,G_s\}$ as enforced by the second set of constraints in (\ref{eq:inner.ellipse.sdp.switched}).

%To prove the claim about finite convergence of $c_r$ to $c^*$, 

We now show that there exists an integer $\bar{r}\geq 0$ such that the optimal value $c_r$ to the SDP at level $r$ is equal to the optimal value $c^*$ of the \aaa{R-UTVLD-LP} for all $r \geq \bar{r}.$ Let $E=E_0$ as defined in (\ref{eq:def.Er.switched}). Observe that the set $E$ so defined satisfies the assumptions of Lemma~\ref{lem:I_rE.properties.switched} and hence there exists an integer $\bar{r}\geq 0$ such that $\mathcal{S}=I_r(E)$ for all $r\geq \bar{r}$. Consequently, the optimal value of the convex quadratic program in (\ref{eq:QP.switched}) with $H_{\pi}=H_{\pi,0}$ and $\bar{\alpha}=1$ is equal to $c^*$ for any $r \geq \bar{r}$. As this optimal value is an upperbound on the optimal value of the SDP in (\ref{eq:inner.ellipse.sdp.switched}) for any $r \geq 0$ (indeed, $H_{\pi,0}$ is always feasible to (\ref{eq:inner.ellipse.sdp.switched})), the claim follows. Finally, as any optimal solution $x_r$ to the SDP at level $r\geq \bar{r}$ satisfies $c^Tx_r=c^*$ and belongs to $\mathcal{S}$, it must be an optimal solution to the \aaa{R-UTVLD-LP} as well. 
\end{proof}

%Our sets $E_r$ will then be the intersection of $l$ ellipsoids.

%
%As done in Section \ref{subsubsec:invariant.ellipsoid.SDP}, we wish to solve the following optimization problem:
%
%The set 

%As done in Section \ref{subsubsec:invariant.ellipsoid.SDP}, we search for sets $E_r$, which are intersections of a fixed number $N$ of ellipsoids, and have the following properties: (i) they are invariant under the dynamics (\ref{eq:difference.inclusion}), (ii) they are contained in the polytope $P=\{x\in \mathbb{R}^n|~ Ax\leq 1\}$, (iii) among all intersections of $N$ ellipsoids

We end with a numerical example.

\begin{Example}\label{ex:switched}
Consider an \aaa{R-UTVLD-LP} defined by the following data:
\begin{align*}
A=\begin{pmatrix}1 & 0\\ -1.5 & 0\\ 0 & 1\\ 0 & -1\\ 1 & 1\end{pmatrix},
b=\begin{pmatrix} 1 \\ 1 \\ 1 \\ 1\\ 1\end{pmatrix},
c=\begin{pmatrix} 0.5 \\ 1 \end{pmatrix},
G_1=\alpha \begin{pmatrix} -1 & -1 \\ -4 & 0 \end{pmatrix}, \text{ and }
G_2=\alpha \begin{pmatrix} 3 & 3 \\ -2 & 1 \end{pmatrix},
\end{align*} 
with $\alpha=0.254$. For this value of $\alpha$ (and in fact for any $\alpha\geq 0.252$), there is no ellipsoid that is invariant under multiplication by the pair $\mathcal{G}=\{G_1,G_2\}$. This can be seen by observing that the SDP in (\ref{eq:converse.thm.LMIs}) is infeasible when $l=1$. However, feasibility of this SDP with $l=2$ shows that there are two ellipsoids whose intersection is invariant under the action of $G_1$ and $G_2$.\footnote{For $\alpha\geq 0.256$, we have $\rho(G_1,G_2)>1,$ and hence no compact full-dimensional set can be invariant under the action of $G_1$ and $G_2$. The fact that $\rho(G_1,G_2)>1$ can be seen by observing that $\sqrt{\rho(G_1G_2)}$ is a lower bound on $\rho(G_1,G_2)$~\cite{Raphael_Book} and that $\sqrt{\rho(G_1G_2)}=1.0029$ when $\alpha=0.256$.}

In Table \ref{tab:ulbs.RULDLP}, we give upper and lower bounds on the optimal value of this \aaa{R-UTVLD-LP}. To obtain the lower bounds, we minimize $c^Tx$ over the sets $S_r$ in (\ref{eq:Sr.switched}) for $r=0,1,2$. To obtain the upper bounds, we solve the SDP in (\ref{eq:inner.ellipse.sdp.switched}) for $l=2$ and $r=0,1,2$. For the convenience of the reader, we write out this SDP ($a_j^T$ here denotes the $j$-th row of the matrix $A$):
\begin{align}\label{eq:sdp.example.switched}
%\min_{x, Q_1, Q_2} &c^Tx \\
\underset{x\in\mathbb{R}^2, Q_{1,2}\in S^{2\times 2}}{\text{minimize}} \hspace*{1cm} & c^Tx  \\
\text{s.t. } \hspace*{1cm}& Q_1\succ 0, Q_2 \succ 0, \nonumber\\
& G_1Q_1G_1^T\preceq Q_1, ~ G_2Q_2G_2^T\preceq Q_1,~G_1Q_1G_1^T\preceq Q_2,~G_2Q_2G_2^T\preceq Q_2, \nonumber\\
& \begin{bmatrix} Q_i & Gx\\ (Gx)^T &1 \end{bmatrix} \succeq 0, ~\forall G \in \mathcal{G}^r,~i=1,2, \nonumber\\
& a_j^TQ_1a_j \leq 1,j=1,\ldots,5, \nonumber\\
& AGx \leq 1, ~\forall G \in \mathcal{G}^{k}, k=0,\ldots,r-1. \nonumber
\end{align}

From Table~\ref{tab:ulbs.RULDLP}, we note that as expected, our sequence of upper bounds (resp. lower bounds) are nonincreasing (resp. nondecreasing). Though we know that these bounds must converge to the optimal value of our \aaa{R-UTVLD-LP} in finite time, convergence has not occurred in this example in 3 iterations. Indeed, the gap between the upper bound and lower bound for $r=2$ is quite small but still nonzero.

%\begin{table}[H]
%	\begin{center}
%		\begin{tabular}{ c|c|c|c } 
%			& $r=0$ & $r=1$ & r=2 \\ 
%			\hline
%			Lower bounds obtained by minimizing $c^Tx$ over $S_r$ &  -1.3333   & -0.9444 &  -0.8889 \\ 
%			Upper bounds obtained by minimizing $c^Tx$ over $I_r$ & -0.7395 &  -0.8029  & -0.8669	 \\ 
%			
%		\end{tabular}
%	\end{center}
%	\caption{Our lower and upper bounds on the optimal value of the R-ULD-LP in Example~\ref{ex:switched}.}
%	\label{tab:ulbs.RULDLP}
%\end{table}

\begin{table}[H]
	\begin{center}
		\begin{tabular}{ c|c|c|c } 
			& $r=0$ & $r=1$ & r=2 \\ 
			\hline
			Lower bounds obtained by minimizing $c^Tx$ over $S_r$ &  -1.3333   & -0.9374 &  -0.8657 \\ 
			Upper bounds obtained by minimizing $c^Tx$ over $I_r$ &
			
			  -0.7973  & -0.8249 &   -0.8417\\
		\end{tabular}
	\end{center}
	\caption{Our lower and upper bounds on the optimal value of the \aaa{R-UTVLD-LP} in Example~\ref{ex:switched}.}
	\label{tab:ulbs.RULDLP}
\end{table}

In Figure \ref{fig:RULDLPexp}, we have plotted the outer approximations $S_r$ to the set $\mathcal{S}$ in dark gray, and the inner approximations $I_r(E_r)$ to the set $\mathcal{S}$ in light gray. To be more specific, let $Q_{1,r}, Q_{2,r}$ be optimal matrices to the SDP in (\ref{eq:sdp.example.switched}) at level $r$. The sets $I_r(E_r)$ that are depicted are defined as:
\begin{equation}\nonumber
\begin{aligned}
I_r(E_r)=\Big\{z\in\mathbb{R}^n|~\ (Gz)^TQ_{1,r}^{-1}(Gz)\leq 1, (Gz)^TQ_{2,r}^{-1}(Gz)\leq 1, \forall G \in \mathcal{G}^r, AG z\leq 1, \forall G \in \bigcup_{k=0}^{r-1} \mathcal{G}^k\Big\}.
\end{aligned}
\end{equation}

% To be more specific, $I_0(E_0)$ is the intersection of the two ellipsoids $\mathcal{E}_1=\{x^TQ_1^{-1}x\leq 1\}$ and $\mathcal{E}_2=\{x^TQ_2^{-1}x \leq 1\}$. The set $I_1(E_1)$ is the intersection of four ellipsoids $$\{x^TG_1^TQ_1^{-1}G_1x\leq 1\}, \{x^TG_1^TQ_2^{-1}G_1x\leq 1\}, \{x^TG_2^TQ_1^{-1}G_2x\leq 1\}, \{x^TG_2^TQ_2^{-1}G_2x\leq 1\},$$ and the polytope $S_0=P$. The set $I_2(E_2)$ is similarly the intersection of eight ellipsoids with the polytope $S_1$. 
% 
 
 In each subfigure, we have also plotted the optimal solutions achieved by minimizing $c^Tx$ over the inner and outer approximations to $\mathcal{S}$. Note that as $r$ increases, the set $\mathcal{S}$ gets sandwiched between these two approximations more and more tightly.

\begin{figure}[H]
	\centering
	\begin{subfigure}{.32\textwidth}
		\centering
		\includegraphics[scale=0.19]{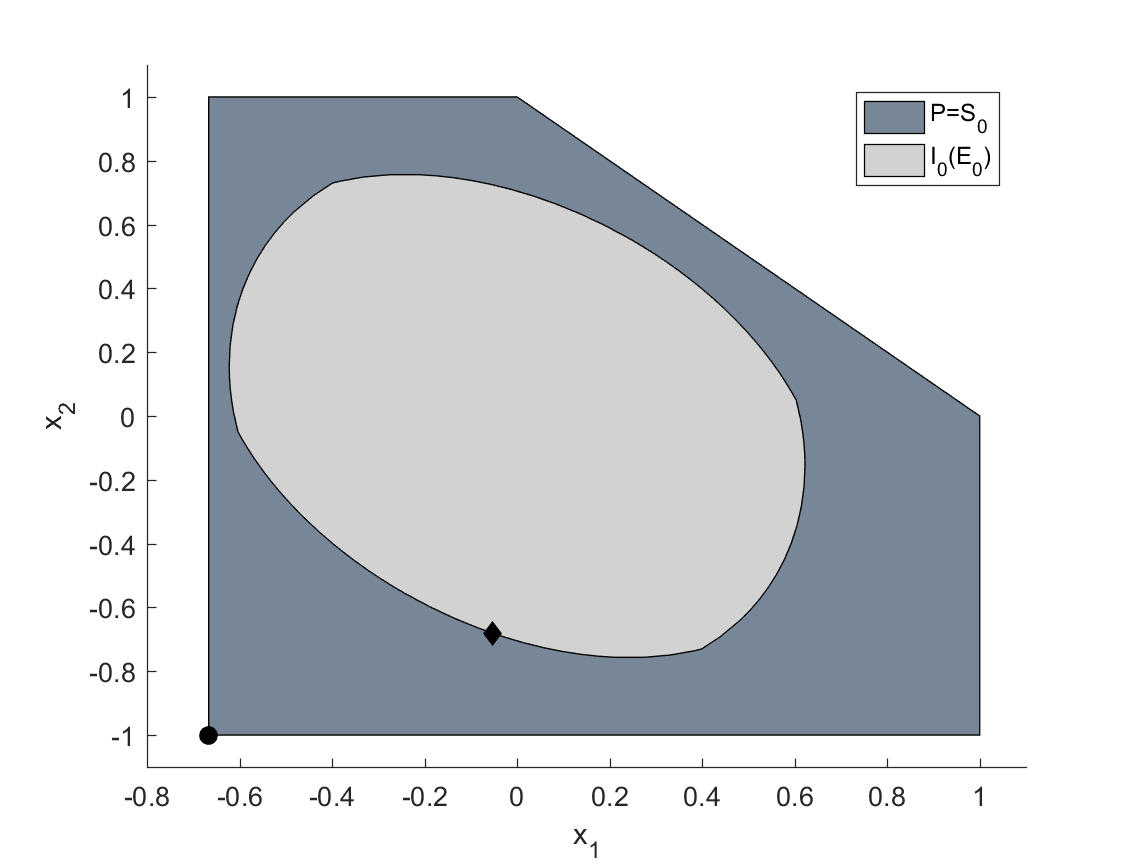}
		\caption{$r=0$}
		\label{fig:RULDLPr0}
	\end{subfigure}%
	\begin{subfigure}{.32\textwidth}
		\centering
		\includegraphics[scale=0.19]{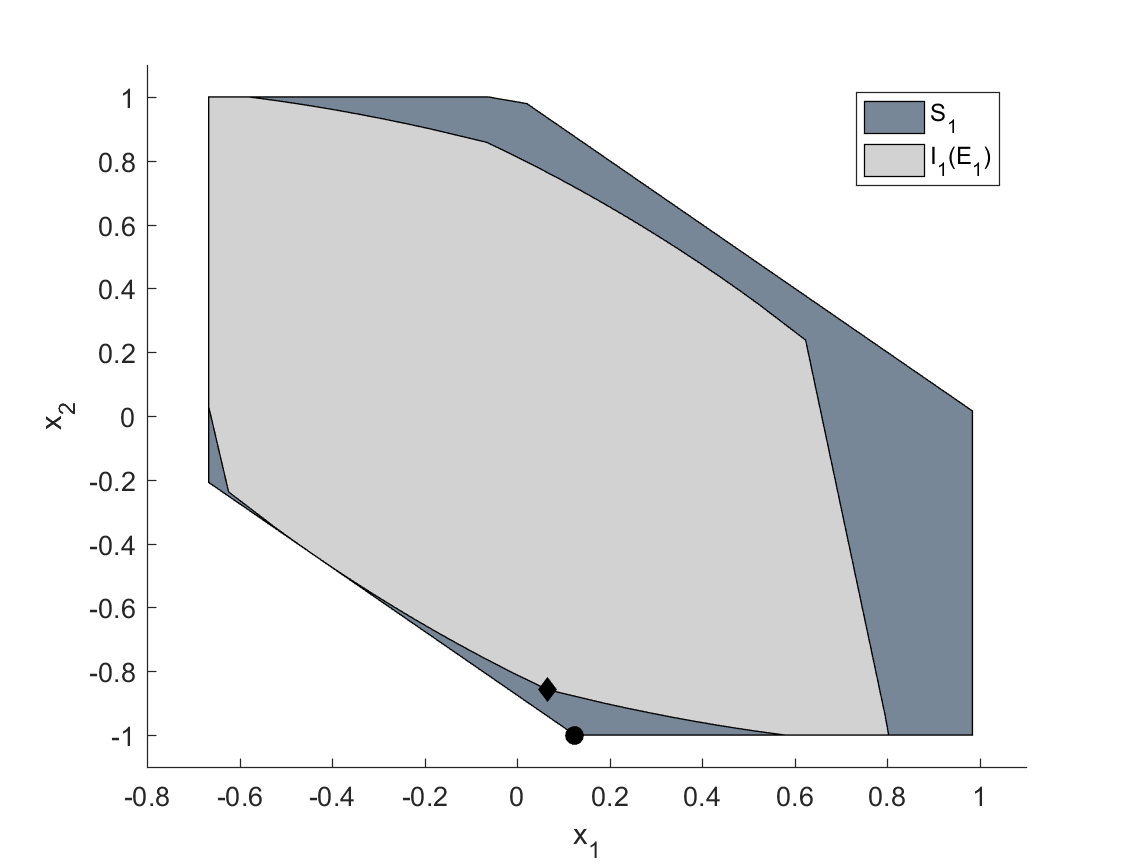}
		\caption{$r=1$}
		\label{fig:RULDLPr1}
	\end{subfigure}
\begin{subfigure}{.32\textwidth}
	\centering
	\includegraphics[scale=0.19]{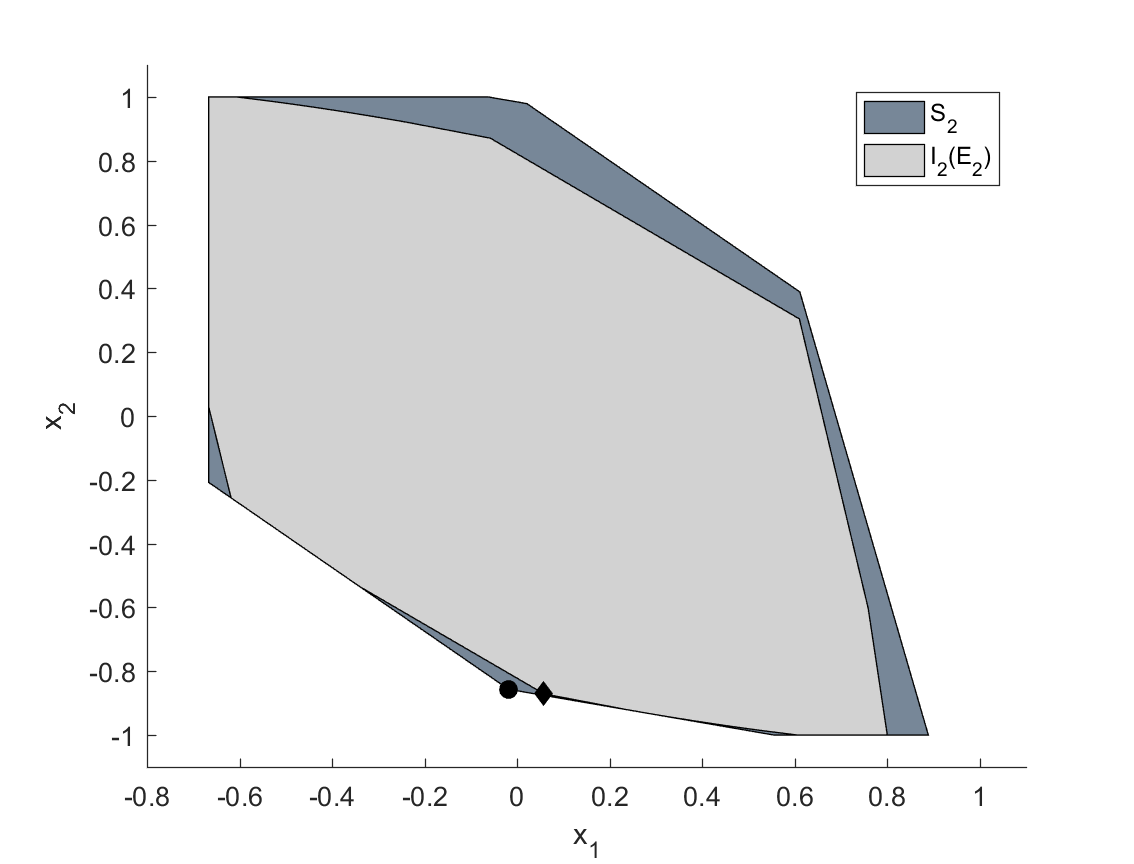}
	\caption{$r=2$}
	\label{fig:RULDLPr2}
\end{subfigure}
	\caption{Three levels of our inner and outer approximations to the feasible set of the \aaa{R-UTVLD-LP} in Example~\ref{ex:switched}.}
	\label{fig:RULDLPexp}
\end{figure}

\end{Example}

\subsection{\aaa{ A Special Case: Permutation Matrices}}\label{sec:permutation}
\aaa{In this section, we consider a special case of R-UTVLD-LP when the input matrices governing the dynamical system in~\eqref{eq:difference.inclusion} are given by $n!$ permutation matrices:
$$\mathcal{P}=\big\{\Pi\in\R^{n\times n} \:|\: \Pi\text{ is a permutation matrix}\big\}.$$
Recall that a {\em permutation matrix} is a square binary matrix with each row and each column containing exactly one nonzero entry. The joint spectral radius of the set of permutation matrices is exactly 1. 

A useful property of $\mathcal{P}$ is that its elements form a group under matrix multiplication. Consequently, we have
\begin{equation}\label{eq:Permutation.product}
\Pi^1,\Pi^2\in \mathcal{P}\quad\implies\quad \Pi^1 \Pi^2\in \mathcal{P},    
\end{equation}
and therefore all finite products of matrices from $\mathcal{P}$ belong to $\mathcal{P}$. 
Moreover, the convex hull of $\mathcal{P}$ has a simple linear description \cite{Birkhoff46}:
\begin{equation}\label{eq:assignment}
	\text{conv}(\mathcal{P})~=~\Big\{\Pi\in\R^{n\times n} 
	\:|\: \sum_{j\in N} \Pi_{ij}=1~\forall i\in N,~\sum_{i\in N} \Pi_{ij}=1~\forall j\in N,~\Pi_{ij}\ge0~\forall i,j\in N\Big\}
\end{equation}
where, $N=\{1,\ldots,n\}$.
When  the linear dynamics of an \aaa{R-UTVLD-LP} is given by permutation matrices, using the same notation as in \eqref{eq:RULDLP} and the implication in~\eqref{eq:Permutation.product}, the feasible set of the \aaa{R-UTVLD-LP} can be written as
\begin{align*}
	\mathcal{S}\mathrel{\mathop{:}}
	&= \bigcap_{k=0}^{\infty} \big\{x \in \mathbb{R}^n| ~A\Pi x \leq b, \forall \Pi\in\mathcal{P}^k\big\}
		=  \big\{x \in \mathbb{R}^n| ~A\Pi x \leq b, \forall \Pi\in\mathcal{P}\big\}.
\end{align*}
We next show that this feasible set has a polynomial-size polyhedral description in extended space (and hence \aaa{R-UTVLD-LP} can be reformulated as polynomial-size linear program in this case).
%which is known as the Birkhoff-von Neumann theorem .

\begin{theorem}\label{thm:birkoff} Consider the \aaa{R-UTVLD-LP} in \eqref{eq:RULDLP} with $P= \{x\in\R^n|\ Ax\leq b  \},$ where $A\in~\R^{m\times n}$, and $\mathcal{G}= \mathcal{P}$. Then, the feasible set $\mathcal{S}$ of the \aaa{R-UTVLD-LP} can be written as
	$$ \mathcal{S}~=~\Big\{x \in \mathbb{R}^n
	~|~\sum_{i\in N}u_{i}^k+\sum_{j\in N}w_{j}^k\le b_k~\forall k\in\{1,\ldots,m\}
	,~u_{i}^k+w_{j}^k\ge A_{ki}x_j~\forall i,j\in N,k\in\{1,\ldots,m\}\Big\}.$$
\end{theorem}
\begin{proof}
First note that $\mathcal{S} = \cap_{k=1}^m \mathcal{S}_k,$ where
$$ \mathcal{S}_k~=~\big\{x \in \mathbb{R}^n~| ~a_k^T\Pi x \leq b_k, \forall \Pi\in\mathcal{P}\big\}.$$
Here, $a_k^T$ denotes the $k$th row of $A$. 
For a given point $\bar x\in\R^n$, we have $\bar x\in\mathcal{S}_k$ if and only if the optimal value $z_k$ to the linear program
$$\max\big\{a_k^T\Pi \bar x ~|~  \Pi\in\mathcal{P}\big\}=\max\big\{a_k^T\Pi \bar x ~|~  \Pi\in\text{conv}(\mathcal{P})\big\}$$
does not exceed $b_k$. Using LP duality and \eqref{eq:assignment}, we have
\begin{align*}z_k
 &=\max\big\{\sum_{i,j\in N}  A_{ki}\bar x_j\Pi_{ij}~|~ \sum_{j\in N} \Pi_{ij}=1~\forall i\in N,~\sum_{i\in N} \Pi_{ij}=1~\forall j\in N,~\Pi_{ij}\ge0~\forall i,j\in N\big\}\\
 &=\min\big\{\sum_{i\in N} u_{i}^{k}+\sum_{j\in N} w_{j}^{k}~|~ 
 u_{i}^{k}+w_{j}^{k}\ge A_{ki}\bar x_j~\forall i,j\in N\big\},
 \end{align*}
where $u^{k},w^{k}\in\R^n$ are the dual variables associated with the constraints in the maximization problem.
As both problem are feasible, this implies that $\bar x\in\mathcal{S}_k$ if and only if the minimization problem above has a solution $\bar u^{k}$ and $\bar w^{k}$ with  
$$\sum_{i\in N}\bar u_{i}^k+\sum_{j\in N}\bar w_{j}^k\le b_k.$$
Consequently, we have
$$ \mathcal{S}_k~=~\Big\{x \in \mathbb{R}^n
~|~\sum_{i\in N}u_{i}^k+\sum_{j\in N}w_{j}^k\le b_k,~u_{i}^k+w_{j}^k\ge A_{ki}x_j~\forall i,j\in N,~u^{k},w^{ k}\in\R^n\Big\}$$
and as $\mathcal{S} = \cap_{k=1}^m \mathcal{S}_k$, the proof is complete.
\end{proof}
We remark that more generally, whenever one has a R-UTVLD-LP problem with dynamics matrices $\mathcal{G}$ in such a way that a polynomial-size LP (or SDP) based description of $\text{conv}({\mathcal{G}^*})$ is available, then one can use the duality arguments above to reformulate the problem as a polynomial-size LP (or SDP).
}

\section{Future directions and a broader agenda: optimization with dynamical systems constraints}\label{sec:opt.with.DS}

In this paper, we studied robust-to-dynamics optimization (RDO) problems where the optimization problem is an LP and \aaar{the dynamics is governed either by a known, or an unknown and time-varying linear system.} Even in these two settings, a number of questions remain open. \aaar{For example, is the problem of testing membership of a point to the feasible set of an R-LD-LP decidable? 
We have shown that this problem is NP-hard in general and polynomial-time solvable when the spectral radius of the linear map is bounded away from one.
It follows from~\cite{almagor2019a} that the general problem is decidable when  $n\leq 3$. In higher dimensions, however, decidability is unknown even in the special case where the input polyhedron is a single halfspace. This special case is related to the so-called Skolem-Pisot problem; see, e.g.,~\cite{Karimov2022,Karimov2023} and references therein.
} In the context of \aaa{R-UTVLD-LPs}, when $\rho(\mathcal{G})<1$ and the polytope $P$ containing the origin in its interior, can one analyze the number of steps needed for our inner and outer approximations to coincide as we did for the case of R-LD-LPs?

%Our Section \ref{sec:R-LD-LP} focused on the case where the dynamical system was known exactly and our Section \ref{sec:R-ULD-LP} considered the case where it was uncertain and time-varying.

\aaar{The algorithmic analysis of RDO problems involving other types of optimization problems and dynamical systems is also left for future research. We note that in full generality, basic questions around RDO can be undecidable. For instance, it follows from~\cite[Corollary 1]{halava2007improved} that when the dynamics is as in Section~\ref{sec:R-ULD-LP} with number of matrices $s$ equal to 2 and the dimension $n$ equal to 9, and when the input set $\Omega$ is the complement of a hyperplane, then testing membership of a given point to the feasible set $\mathcal{S}$ of the RDO is undecidable. Similarly, one can observe that the same question is undecidable  when the dynamical systems is given by degree-2 polynomial differential equations and the input set $\Omega$ is an open halfspace; see~\cite[Theorem 26]{hainry2009}, and also~\cite{Andrade2023} for a similar result regarding dynamical systems arising from no-regret learning in games. Despite these negative results, we believe that future research can identify additional special cases of RDO problems that admit tractable algorithms. 

%For instance, we believe that extensions of our results to the case where $\Omega$ is given by convex quadratic functions should be possible.

%This follows, e.g., from undecidability of reaching a hyperplane for this class of continuous-time dynamical systems~\cite[Theorem 26]{hainry2009}; 
}
%In this direction, we have done some work that analyzes the case where the optimization problem is a convex quadratic program and the dynamics are the same as those in this paper~\cite{rdo_qp}. %Extensions of our framework to the case where the feasible set of the mathematical program also changes with time would also be interesting.

% [*Discuss extention where Omega depends on time?*]

More generally, we believe that optimization problems that incorporate \emph{``dynamical systems (DS) constraints''} in addition to standard mathematical programming constraints can be of interest to the optimization community at large. An optimization problem with DS constraints is a problem of the form:

\begin{equation}\label{eq:opt.dynamic}
\begin{array}{lll}
\mbox{minimize} &  f(x) &\ \\ 
\mbox{subject to} & x\in \Omega\cap \Omega_{DS}. &\ 
\end{array}
\end{equation}
Here, the set $\Omega$ is the feasible set of a standard mathematical program and is described in an explicit functional form; i.e., $\Omega\mathrel{\mathop:}=\{x\in\mathbb{R}^n|\  h_i(x)\leq 0, i=1,\ldots,m \}$ for some scalar valued functions $h_i$. The constraint set $\Omega_{DS}$ however is defined implicitly and always in relation to a dynamical system given in explicit form $$\dot{x}=g(x) \ \mbox{(in continuous time)} \quad \mbox{or} \quad  x_{k+1}=g(x_k) \ \mbox{(in discrete time)}.$$ The set $\Omega_{DS}$ corresponds to points whose future trajectory satisfies a \emph{prespecified desired property over time}. The optimization problem (\ref{eq:opt.dynamic}) with different DS constraints can look like any of the following:

%\footnote{Here, $\dot{x}$ denotes the time derivative of the vector $x$, $k$ is the index of discrete time, and $g$ is a function from $\mathbb{R}^n$ to $\mathbb{R}^n$.}

Optimize $f$ over the points in $\Omega$ whose future trajectories under the dynamical system $g$
\begin{itemize}
\item stay in $\Omega$ for all time {\it (invariance)},
\item asymptotically flow to the origin {\it (asymptotic stability)},
\item never intersect a given set $\Theta$ {\it (collision avoidance)},
\item reach a given set $\Theta$ in finite time {\it (reachability)}, etc.
%\item asymptotically flow to a given periodic orbit {\it (orbital stability)}, etc.
%\item $\ldots$
\end{itemize}
%\aaa{One can imagine these problems arising from mathematical models in several applications. For example, for the version with asymptotic stability, a space engineer may want to know the minimum necessary speed for release of a rover off of a spacecraft, so it gravitates towards a desired planet for an exploratory mission. Similarly, for the version with collision avoidance, an economist may want to know the lowest federal interest rate beyond which the US economy would undergo inflation subject to a bound on the unemployment rate.
%}

\aaa{One can imagine these problems arising from mathematical models in several domains. For example, in robotics, the version with asymptotic stability could correspond to establishing how high a humanoid robot can raise one of its legs before losing balance; see Figure 6 in~\cite{majumdar2014control}. The version with collision avoidance could correspond to the problem of computing the highest initial speed at which an autonomous vehicle can perform a turn without going off track.}

Figure~\ref{fig:opt.dynamics.constraints} gives an example of a two-dimensional optimization problem with DS constraints. Here, the objective function is $f(x)=-x_2$, the set $\Omega$ (plotted in blue) is defined by a linear inequality and a convex quadratic inequality, and the dynamical system is a cubic differential equation, $g(x)=(-x_2+3x_1x_2,x_1-\frac{1}{2}x_1^2x_2)^T,$ whose resulting vector field is plotted with little orange arrows. A DS constraint can be any of the four items listed above with $\Theta$ being the red triangle. %One can also imagine an extension where the sets $\Omega$ and $\Theta$ vary with time.

%\vspace{10mm}
%An objective function $f(x)=-x_2$ is being minimized over a convex set $\Omega$ subject to additional DS constraints arising from a polynomial vector field $\dot{x}=g(x)$ (orange arrows).
%
%
%
%
%
%The dynamical system is a cubic differential equation, $g(x)=(-x_2+3x_1x_2,x_1-\frac{1}{2}x_1^2x_2)^T,$ whose resulting vector field is plotted with little arrows
%
%
%
%An objective function $f(x)=-x_2$ is being minimized over a convex set $\Omega$ subject to additional DS constraints arising from a polynomial vector field $\dot{x}=g(x)$ (orange arrows).

%\vspace{10mm}

%In Figure~\ref{fig:opt.dynamics.constraints}, the dynamical system is a cubic differential equation, $g(x)=(-x_2+3x_1x_2,x_1-\frac{1}{2}x_1^2x_2)^T,$ whose resulting vector field is plotted with little arrows. The constraint set $\Omega_{DS}$ corresponds to points whose future trajectory satisfies a \emph{prespecified desired property over time}. The optimization problem (\ref{eq:opt.dynamic}) with different DS constraints can look like any of the following:
%
%
%
%
%In Figure~\ref{fig:opt.dynamics.constraints}, the set $\Omega$ is defined by a linear inequality and a convex quadratic inequality and the objective function $f(x_1,x_2)=-x_2$ tries to move in the ``north'' direction as much as possible.

\begin{figure}[h]
\centering
    \includegraphics[scale=0.25]{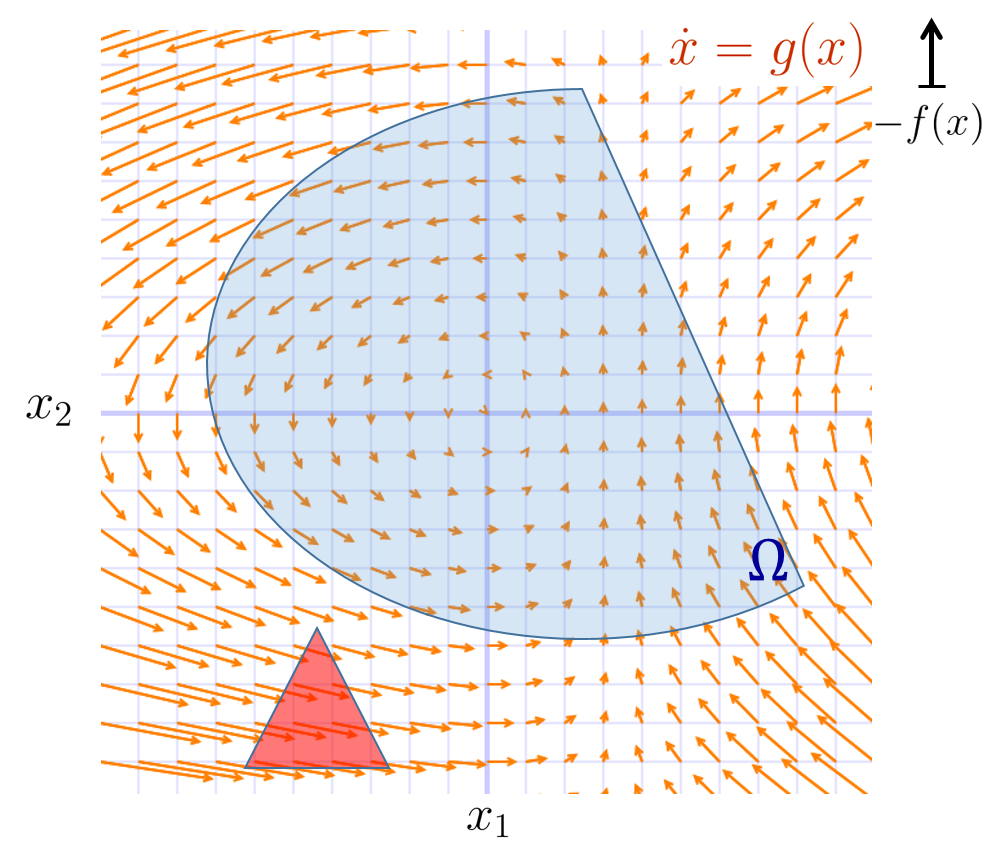}
    \caption{An illustration of an optimization problem with dynamical systems (DS) constraints.}
     \label{fig:opt.dynamics.constraints}
\end{figure}

We would cautiously argue that the optimization community is by and large not used to thinking about constraints that are defined implicitly via a dynamical system. At the other end of the spectrum, while the study of invariant sets, regions of attraction, reachable sets, etc. is very natural for control theorists, their focus often is not on settings where one needs to optimize over these sets subject to a melding with mathematical programming constraints. 

An interesting future research agenda would be to initiate a systematic algorithmic study of optimization problems with DS constraints. By this, we mean a rigorous complexity study of problems of this type, via, e.g., either a polynomial-time algorithm, or an NP-hardness/undecidability result, or an approximation algorithm. As can be seen from the table below, this paper has only covered the tip of the iceberg when it comes to such problems. Indeed, a class of optimization problems with DS constraints is defined by taking one element of each of the three columns of this table. The starred entries correspond to cases to which this paper has contributed partial results.

% are the ones that have been studied in this paper. The goal would be to provide a rigorous complexity study of each problem class. This could be via, e.g., a polynomial-time algorithm, an NP-hardness result, or an approximation algorithm.
%%
%
%
%The table below presents a non-exhaustive list of such problems: t
%
%
%
%
%The table below presents a non-exhaustive list of such problems
%
%
%
%, for example, those arising from any combination 
%
%
%
%
%. More concretely, 

%At the moment, a systematic study of algorithms for different combinations of mathematical programming constraints and dynamical systems constraints is lacking.

%The broad agenda of this part of the proposal is to build up such an algorithmic theory. More concretely, we would like to consider optimization problems with DS constraints arising from various combinations of the three columns in the table below, and in each case furnish a rigorous complexity statement (e.g., a polynomial-time algorithm, an NP-hardness result, or an approximation algorithm). %, etc.).
%

\begin{center}

\begin{table}[H]
\small
\centering
\begin{tabular}{l|l|l}
\textbf{Opt. Problem ``$f,\Omega$''} & \textbf{Type of Dynamical System ``$g$''}       & \textbf{DS Constraint ``$\Omega_{DS}$''}          \\ \hline
Linear program*                & Linear*                  &Invariance*          \\
Convex quadratic program               & Linear and uncertain/stochastic                 &Inclusion in region of attraction      \\
Semidefinite program          & Linear, uncertain, and time-varying*                   &Collision avoidance      \\
Robust linear program             & Nonlinear (e.g., polynomial)                  &Reachability    \\
0/1 integer program       & Nonlinear and time-varying & Orbital stability\\
      Polynomial program    & Discrete/continuous/hybrid of both  & Stochastic stability\\
\vdots         & \vdots  &\vdots
\end{tabular}
\end{table}
\end{center}

\aaa{\st{We believe that optimization problems with dynamical systems constraints are fundamental enough to find applications in different areas, and that their study will surely lead to interesting mathematics at the intersection of algebra and geometry.}}

%
%In~\cite{rdo}, we also extend problem (\ref{eq:R-LD-LP}) to the case where the dynamics, instead of being a linear difference equation $x_{k+1}=Gx_k$, is a linear difference inclusion: 
%
%
%\begin{equation}\label{eq:lin.inclusion}
%x_{k+1}\in conv\{G_1,\ldots,G_m\}x_k.
%\end{equation}
%
%In this setting, we have $m$ matrices $G_1,\ldots,G_m$ of dimension $n$ given to us as input. In each time step $k$, \emph{any} matrix in their convex hull can be applied to the point $x_k$ to produce a new point $x_{k+1}$. This matrix can change (or ``switch'') arbitrarily from one time step to the next. In the controls and hybrid systems literature, this is a highly-studied model of linear systems that are both \emph{uncertain} and \emph{time-varying}~\cite{composite_Lyap2},~\cite{convex_conjugate_Lyap},~\cite{sosconvex_Lyap_cdc},~\cite{LeeD06},~\cite{daafouzbernussou},~\cite{LeeK09}. 

%The analysis of this model, however, is incredibly delicate. For example, the problem of testing boundedness of trajectories of (\ref{eq:lin.inclusion}) is undecidable (already when $m=2$ and $n=47$ *,*) and no finite-time algorithm is currently known for checking its asymptotic stability (see open problem 10.2 in *). 

\section{Acknowledgments} The authors are grateful to Georgina Hall for several insightful discussions (particularly around the content of Proposition~\ref{prop:P.unbounded.not.finite} and Section~\ref{subsubsec:IrEr.switched}), and to two anonymous referees whose comments have grately improved our manuscript.

%for her careful reading of our manuscript, which has led to a much improved presentation.

%\section{Robust to uncertain linear dynamics linear programming}\label{sec:R-ULD-LP}
%%==========================================================
%[*To do*]: this section will likely take a bit of time (this is where the dynamics is uncertain and time-varying (switched between matrices) and hence the joint spectral radius comes in). Amirali needs to import all information from the slides here in terms of formal theorems. We need to investigate whether the origin symmetry can be removed..[*End to do*]

%\section{Robust to uncertain linear dynamics quadratic programming}\label{sec:R-ULD-QP}
%==========================================================
%[*To do*]: Amirali has never thought about the convex QP case, but maybe we should do it for the sake of consistency.[*End to do*]
\bibliographystyle{siamplain}
\bibliography{pablo_amirali}

\end{document}